\newtheorem{theorem}{Theorem}[section]
\newtheorem{lemma}[theorem]{Lemma}
\newtheorem{prop}[theorem]{Proposition}
\newtheorem{claim}[theorem]{Claim}
\newtheorem{remark}[theorem]{Remark}
\newtheorem{example}[theorem]{Example}
\newtheorem{definition}[theorem]{Definition}
\numberwithin{equation}{section}
\def\bs {\boldsymbol}
\def \ed{{\rm d}}
\def\e{{\rm e}}
\def\alg{{\rm alg}}
\def\diag{{\rm diag}}
\def\rank{{\rm rank}}
\def\span{{\rm span}}
\def \w{\wedge}
\def\R{\mathbb{R}}
\def \Ecal{\mathcal{E}}
\def \Pcal{\mathcal{P}}
\def \Qcal{\mathcal{Q}}
\def\Gcal{\mathcal{G}}
\def\Wcal{\mathcal{W}}
\def\Lcal{\mathcal{L}}
\def\Dcal{\mathcal{D}}
\def\GL{{\rm GL}}
\def\SL{{\rm SL}}
\def\foli{\mathscr{F}}
\def\Ab{\bs{A}}
\def\Bb{\bs{B}}
\def\lbb{[\![}
\def\rbb{]\!]}
\def\<{\langle}
\def\>{\rangle}
\def\Maple{\textsc{Maple} }
\def\Cartan{\texttt{Cartan} }
\title[Hyperbolic Monge--Amp\`ere systems]
{Hyperbolic Monge--Amp\`ere systems with $S_1=0$}
\author{Yuhao Hu}
\address{School of Mathematical Sciences, 
Shanghai Jiao Tong University, Shanghai, 200240, P. R. China
}
\email{yuhao.hu@sjtu.edu.cn}
\keywords{Hyperbolic Monge--Amp\`ere systems, contact transformation, $G$-structures}
\subjclass[2020]{35L10, 
58A15, 
53C10
}
\begin{document}

\maketitle

\begin{abstract}
	 For hyperbolic Monge--Amp\`ere systems, a well-known solution of the equivalence problem
	 yields two invariant tensors, ${S}_1$ and ${S}_2$, defined on the underlying $5$-manifold, where ${S}_2=0$
	 characterizes systems that are Euler--Lagrange. In this article, we consider the
	  `opposite' case,
	 ${S}_1 = 0$, and show that 
	 the local generality of such systems is `$2$ arbitrary functions of $3$ variables'.
	 In addition, we classify all $S_1=0$ systems with cohomogeneity at most one, which turn out to be linear up to contact transformations.
\end{abstract}

\setcounter{tocdepth}{1}
\begin{quote}
{\tableofcontents}
\end{quote}


\section{Introduction}

The classical Monge--Amp\`ere equations in the plane are second-order PDEs of the form
\begin{equation}\label{MAE}
	A(z_{xx}z_{yy} - z_{xy}^2) + B z_{xx} + 2C z_{xy} + D z_{yy} + E = 0,
\end{equation}
where the coefficients are given functions of $x,y,z,z_x,z_y$. Geometrically, any such equation is encoded
by a $5$-manifold $M$, with local coordinates $(x,y,z,p,q)$, and a differential ideal $\Ecal\subset \Omega^*(M)$ generated by the contact form $\theta = \ed z - p\ed x - q\ed y$ and the $2$-form 
\[
	\Psi = A \ed p \w\ed q + B \ed p \w \ed y + C(\ed q\w\ed y-\ed p\w\ed x) - D\ed q\w\ed x + E\ed x\w\ed y.
\]
The resulting \emph{Monge--Amp\`ere system} $(M, \Ecal)$ is a natural setting for studying  
Monge--Amp\`ere equations modulo contact transformations.

Regarding Monge--Amp\`ere systems, a fundamental problem is the `equivalence problem'
(\cite{Gardner89}, \cite{Olver95_Equiv}):
determine whether two given systems $(M_k, \Ecal_k)$ $(k = 1,2)$ are \emph{equivalent}
in the sense that there exists a diffeomorphism $f: M_1\to M_2$ satisfying
$f^* \Ecal_2 = \Ecal_1$. When this condition holds, the underlying Monge--Amp\`ere 
PDEs are contact equivalent. Ideally, solution of the equivalence problem will yield a
classification of the equivalence classes.

In the following, we will focus on \emph{hyperbolic} Monge--Amp\`ere systems (see Definition~\ref{MADef3types}). For such systems, it is often useful to adopt a classification scheme based on the two `characteristic distributions' 
$\Dcal_+, \Dcal_-$ associated to $(M,\Ecal)$ (see \cite{Morimoto97}).
Both $\Dcal_+$ and $\Dcal_-$ are rank-$2$ distributions on $M$; 
the ranks of the successive derived distributions
$\Dcal^{(1)}_\pm, \Dcal^{(2)}_\pm,\Dcal^{(3)}_\pm$ are invariants of the system. In the `generic' case when these ranks are $(3,5,5)$ for both $\Dcal_\pm$,
Marvan--Vinogradov--Yumaguzhin \cite{MVY07} obtained a list of differential invariants that resolves the equivalence problem.

Moreover, classification has been obtained under less generic assumptions, for example, 
when $(M,\Ecal)$ admits transitive infinitesimal symmetry \cite{Tchij99}, and when $(M,\Ecal)$
arises from a system defined on a symplectic $4$-manifold
\cite{Kruglikov99}; there are also results that
 determine when a Monge--Amp\`ere equation is contact equivalent to a 
linear one \cite{Kushner08}. Extensive treatment of the classification 
of Monge--Amp\`ere equations include \cite{LRC93} and \cite{KLR07}; for a quick summary of 
known results, see \cite{Kushner09}.

The current work, however, follows a somewhat different approach to the 
equivalence problem for hyperbolic Monge--Amp\`ere systems. In \cite[Section 2.1]{BGG}, Bryant--Griffiths--Grossman, by using $G$-structure theory and coframe adaptations,
found that to any hyperbolic Monge--Amp\`ere system $(M,\Ecal)$ is associated a pair of invariant tensors $S_1$ and $S_2$ defined on $M$ (see Theorem~\ref{BGG_Thm} and equation \eqref{S1S2Tensor} below). Two outstanding features of these tensors are: 
\begin{enumerate}[\qquad (a)]
\item $(M,\Ecal)$ corresponds to the wave equation $z_{xy} = 0$ if and only if $S_1=S_2=0$;
\item $(M,\Ecal)$ is Euler--Lagrange (i.e., whose integral manifolds are critical points of a Lagrangian functional) if and only if $S_2 =0$.
\end{enumerate}
The objective of the current work is to understand the case of $S_1 = 0$, an `opposite' to the 
Euler--Lagrange case. Due to the use of Cartan--K\"ahler theory in our analysis, we will always assume that the systems considered are real analytic.

Our first result is the following (see Proposition~\ref{S2dgnrProp} below):
\begin{quote}
	\textbf{Theorem A.} \emph{If $S_1 = 0$, then $S_2$ must be degenerate.}
\end{quote}
Since the case of $S_1=S_2=0$ is well-understood, it remains to consider
the case when the coefficient matrix of $S_2$ (see \eqref{S1S2_Mat}) has rank $1$. To proceed, we put $S_2$ in a normal form,
thereby reducing to a $G_1$-structure on $M$, where $G_1\subset \GL(5,\R)$ is a $4$-dimensional
subgroup. It turns out that there are only two relative invariants $Q_1, Q_2$ at this stage (see Theorem~\ref{S1VanThm}), and three subcases arise:
\begin{enumerate}[\qquad I.]
	\item $Q_1 = Q_2 = 0$;
	\item $Q_1\ne 0, Q_2 = 0$ (or equivalently $Q_1 =0, Q_2\ne 0$);
	\item $Q_1, Q_2\ne 0$.
\end{enumerate}

Our second result gives the local generality for each of these three cases
(combining Theorem~\ref{CaseICoordForm_Thm}, Theorem~\ref{CaseII_GenThm} and Theorem~\ref{CaseIII_GenThm} below):
\begin{quote}
	\textbf{Theorem B.} \emph{The precise generality of the germs of hyperbolic Monge--Amp\`ere systems in each of the cases
	I, II and III are the following.}
	\[\renewcommand\arraystretch{1.1}
	\begin{array}{c|l}
		\mbox{Case}  & \mbox{\quad generality (up to diffeomorphism)}\\\hline
		\mbox{I} 	&	\mbox{$1$ arbitrary function of $2$ variables}\\
		\mbox{II} &		\mbox{$1$ arbitrary function of $3$ variables}\\
		\mbox{III}&		\mbox{$2$ arbitrary functions of $3$ variables}
	\end{array}
	\]
	\emph{As a result, the
	$S_1=0$ hyperbolic Monge--Amp\`ere systems locally 
depend on $2$ arbitrary functions of $3$ variables, up to diffeomorphism.}
\end{quote}
Thus, the generality in the $S_1 = 0$ case is considerably less
than that in the Euler--Lagrange case ($1$ arbitrary function of $5$ variables).

For Case I, whose structure is the simplest among the three, we show that, up to contact transformations,
the underlying Monge--Amp\`ere PDEs are precisely linear equations of the 
following form (see Theorem~\ref{CaseICoordForm_Thm}):
	\begin{equation}
		\begin{aligned}
		&z_{xy} + (\Phi_y - f_y) z_x - (\Phi_x+f_x) z_y \\
		&\qquad\qquad-
		 \big[(\Phi_x+f_x)(\Phi_y - f_y) + f_{xy}\big] z = 0,
		\end{aligned}
	\end{equation}
	where $f = f(x,y)$ is an arbitrary function of two variables, and $\Phi(x,y)$ is any function that
	satisfies
	$
		\Phi_{xy} = \e^{2f}.
	$
For these equations, there is an associated surface geometry that encodes all the structure invariants (see Section~\ref{Case_I_asociGeom}).

Our last result concerns the symmetry of the hyperbolic Monge--Amp\`ere systems
being considered (see Theorem~\ref{CohomogThm} below):
\begin{quote}
	\textbf{Theorem C.} \emph{Given a hyperbolic Monge--Amp\`ere system $(M,\Ecal)$
	satisfying $S_1 = 0$ and $S_2\ne 0$, if the Lie algebra of infinitesimal symmetry $\mathfrak{l}\subset \mathfrak{X}(M)$ satisfies $\dim(\mathfrak{l}_p)\ge 4$
	 for each $p\in M$, then $(M,\Ecal)$ must belong to Case I.}
\end{quote}
Regarding systems in Case I, we determine the co-dimension of $\mathfrak{l}_p\subset T_pM$ in terms of
the function $f(x,y)$ (see Proposition~\ref{CaseIsym_via_f}).
The lack of symmetric examples in the relatively more generic cases II and III serves as 
another contrast with the Euler--Lagrange class. 

Most of our results are obtained by using Cartan's method of equivalence, which yields
structure equations on which further analysis is based. The structure equations for Case I
 are \eqref{Q1Q2VanStrEqn} and \eqref{Q1Q2VanStrEqnConn}, while those for Cases II and III,
 which are lengthier, are included in Appendix~\ref{Apdx:StrEqn}. 

Computations in this work are assisted by the \Cartan package in \Maple.


\section{The invariant tensors $S_1$ and $S_2$}

In this section, we briefly recall Bryant--Griffiths--Grossman's solution \cite{BGG} of the equivalence problem for hyperbolic 
Monge--Amp\`ere systems, particularly the invariant tensors $S_1$ and $S_2$ 
that arise from their solution.

\begin{definition}[\cite{BGG}]\label{hMADef}
	Given a contact manifold
	 $(M^{2n+1}, \<\theta\>)$ and an $n$-form $\Psi\in \Omega^n(M)$,
	 let $\Ecal\subset\Omega^*(M)$ be the ideal generated by 
	 $\theta, \ed\theta$ and $\Psi$. The pair $(M,\Ecal)$ is called 
	 a \emph{Monge--Amp\`ere system}.
\end{definition}

In the following, we will be concerned with the classical case of $n = 2$.

\begin{definition}\label{MADef3types}
	A Monge--Amp\`ere system $(M^5; \Ecal)$, where $\Ecal = \<\theta,\ed\theta,\Psi\>_{\alg}$,
	is said to be \emph{hyperbolic} (resp., elliptic, parabolic) if $\Psi\notin \<\theta, \ed\theta\>$ and
	the quadratic equation for $\mu$ obtained from
	\[
		(\mu \ed\theta + \Psi)\w (\mu \ed\theta + \Psi)\equiv 0\mod \theta
	\]
	has \emph{positive}  (resp., negative, zero) discriminant.
\end{definition}

For hyperbolic Monge--Amp\`ere systems,  \cite{BGG} obtained the following.

\begin{theorem}[\cite{BGG}]\label{BGG_Thm}
Given a hyperbolic Monge--Amp\`ere system $(M^5,\Ecal)$ and  
	 any $x\in M$, there exists a neighborhood $U$ of $x$ and a coframe field 
	 $(\omega^0,\omega^1,\ldots,\omega^4)$ defined on $U$ such that
	 $\Ecal = \<\omega^0,\omega^1\w\omega^2,\omega^3\w\omega^4\>_{\alg}$ 
	 and
	 \begin{equation}\label{hMA_1ststrEqn}
	 \begin{aligned}
	 	\ed\left(\begin{array}{c}
			\omega^0\\
			\omega^1\\
			\omega^2\\
			\omega^3\\
			\omega^4
		\end{array}\right)
		 =& -\left(\begin{array}{ccccc}
		 	\phi_0 &&&&\\
			&\phi_1&\phi_2&&\\
			&\phi_3&\phi_4&&\\
			&&&\phi_5&\phi_6\\
			&&&\phi_7&\phi_8
			\end{array}
		 \right)\w
		 \left(\begin{array}{c}
			\omega^0\\
			\omega^1\\
			\omega^2\\
			\omega^3\\
			\omega^4
		\end{array}\right)\\
		&
		\qquad\qquad + \left(
			\begin{array}{c}
				\omega^1\w\omega^2+\omega^3\w\omega^4\\
				(V_1+V_5)\omega^0\w\omega^3+(V_2+V_6)\omega^0\w\omega^4\\
				(V_3+V_7)\omega^0\w\omega^3+(V_4+V_8)\omega^0\w\omega^4\\
				(V_8-V_4)\omega^0\w\omega^1+(V_2-V_6)\omega^0\w\omega^2\\
				(V_3 - V_7)\omega^0\w\omega^1+(V_5-V_1)\omega^0\w\omega^2
			\end{array}
		\right)
		\end{aligned}
	 \end{equation}
	 for some functions $V_1,\ldots, V_8$ and $1$-forms $\phi_i$ $(i = 0,\ldots,8)$ satisfying $\phi_0 = \phi_1+\phi_4 = \phi_5+\phi_8$.
\end{theorem}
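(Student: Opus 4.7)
The plan is to apply Cartan's method of equivalence. First I would identify the intrinsic geometric data carried by $(M, \Ecal)$: the contact line $\<\theta\>$ giving $\omega^0$ up to scale, and, from the hyperbolic condition, the two distinct decomposable $2$-forms in the pencil $\mu\,\ed\theta + \Psi$ mod $\theta$ corresponding to the two real roots $\mu_\pm$ of the defining quadratic $(\mu\,\ed\theta+\Psi)\w(\mu\,\ed\theta+\Psi)\equiv 0 \mod \theta$. Each decomposable $2$-form factors modulo $\theta$ into a product of two $1$-forms, producing two intrinsic rank-$2$ subspaces of $T^*_pM/\<\theta\>$ spanned by $\{\omega^1,\omega^2\}$ and $\{\omega^3,\omega^4\}$ respectively. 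This identifies the initial $G$-structure on $M$; its structure group $G_0$ consists of transformations rescaling $\omega^0$, acting separately by $\GL(2,\R)$ on each characteristic pair, and shearing each $\omega^i$ $(i=1,\ldots,4)$ by multiples of $\omega^0$ (modulo a discrete swap of the two characteristic factors, fixed by a choice of ordering).

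Next, on the $G_0$-bundle of adapted coframes I would compute $\ed\omega^i$ for each $i$, introduce pseudo-connection $1$-forms $\phi_j$ matching the Lie algebra of $G_0$, and absorb torsion systematically. The contact condition forces $\ed\omega^0$ to have a non-degenerate component mod $\omega^0$, and, because this component lies in the sum of the two intrinsic characteristic $2$-form factors, we may rescale to write it as $\omega^1\w\omega^2 + \omega^3\w\omega^4$; this normalization fixes the scale on $\omega^0$ in terms of the determinants of the two $\GL(2,\R)$ blocks, reducing $G_0$ by one parameter. The remaining freedom absorbs the torsion of $\ed\omega^1,\ldots,\ed\omega^4$ lying in $\Lambda^2(T^*M/\<\omega^0\>)$, leaving residual torsion only in the $\omega^0\w\omega^i$ terms; the coefficients of these terms form the $8$ invariants $V_1,\ldots,V_8$ in the combinations displayed in \eqref{hMA_1ststrEqn}.

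Finally, the relations $\phi_0 = \phi_1+\phi_4 = \phi_5+\phi_8$ emerge from $\ed^2\omega^0 = 0$: differentiating the first structure equation and collecting the components proportional to $\omega^1\w\omega^2$ and $\omega^3\w\omega^4$ yields exactly these two linear constraints on the pseudo-connection, while the remaining components of $\ed^2 = 0$ pin down the specific $\pm$ pattern of $V_i$-combinations distinguishing $\ed\omega^3,\ed\omega^4$ from $\ed\omega^1,\ed\omega^2$ (e.g., $V_8-V_4$ versus $V_4+V_8$). The main obstacle will be the torsion-absorption bookkeeping: one must verify that, after exhausting every absorbable parameter of $G_0$, exactly $8$ genuine torsion coefficients survive in the specific pattern of \eqref{hMA_1ststrEqn}, with no further linear relations forced at this stage. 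This is a moderate-size linear-algebra exercise, well-suited to the \Cartan package in \Maple that the author employs throughout the paper.
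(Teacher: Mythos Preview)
The paper does not prove this theorem; it is quoted from \cite{BGG} and only the surrounding discussion (the group $G$, the transformation laws \eqref{GActIdComp}--\eqref{GActJ}, and the tensors \eqref{S1S2Tensor}) is spelled out. So there is no in-paper argument to compare against beyond the reference to Cartan's method in \cite[Section~2.1]{BGG}, which is indeed the approach you outline.

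Your sketch is correct in spirit and would succeed, but one attribution is off. The relations $\phi_0 = \phi_1+\phi_4 = \phi_5+\phi_8$ do \emph{not} come from $\ed^2\omega^0 = 0$; they arise already at the group-reduction step. Once you normalize the torsion of $\ed\omega^0$ to $\omega^1\w\omega^2+\omega^3\w\omega^4$, preservation of this form under the structure group forces $a=\det(\Ab)=\det(\Bb)$ (as the paper records just after the theorem), and the infinitesimal version of that constraint is precisely $\phi_0=\phi_1+\phi_4=\phi_5+\phi_8$. Similarly, the specific $\pm$ pattern in the $V_i$-combinations is not a consequence of $\ed^2=0$ but simply a convenient parametrization of the residual (unabsorbable) torsion after the shear freedom $\omega^i\mapsto\omega^i+c_i\omega^0$ has been used up; the identities $\ed^2\omega^i=0$ enter only at the \emph{next} stage, producing differential relations among the $V_k$'s rather than the algebraic shape of \eqref{hMA_1ststrEqn}. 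With those two corrections your plan matches what \cite{BGG} actually does.
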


The local coframe fields indicated in Theorem~\ref{BGG_Thm} are precisely the local sections of a $G$-structure
	$\Gcal$ on $M$,
	 where $G\subseteq \GL(5,\R)$ is the subgroup generated by
	 \[
	 	h = \left(\begin{array}{ccc}
			a & &\\
			&\Ab & \\
			&&\Bb
		\end{array}\right), \quad \Ab, \Bb\in \GL(2,\R), ~a = \det(\Ab) = \det(\Bb)
	 \] 
	 and
	 \[
	 	J =  \left(\begin{array}{ccc}
			1 & 0&0\\
			0&0 & I_2\\
			0&I_2&0
		\end{array}\right).
	 \]
	 The group $G$ acts on $\Gcal$ by $u\cdot g =  g^{-1}u$, for each coframe $u\in \Gcal$ viewed
	 as a column of $1$-forms and $g\in G$. Moreover, the structure equations on $\Gcal$ take the same form
	 as \eqref{hMA_1ststrEqn}, where
the $\omega^i$'s are the tautological $1$-forms, $\phi_\alpha$'s the pseudo-connection forms,
and $V_k$'s the torsion functions, all defined on $\Gcal$. In \cite{BGG}, it is shown how the torsion functions $V_k$ $(k = 1,2,\ldots,8)$ 
transform along the fibres of $\Gcal$; that is, by writing
\begin{equation}\label{S1S2_Mat}
	S_1 = \left(\begin{array}{cc}
						V_1&V_2\\V_3&V_4
					\end{array}\right),\qquad
	S_2 = \left(\begin{array}{cc}
						V_5&V_6\\ V_7&V_8
					\end{array}\right),
\end{equation}
we have
\begin{equation}\label{GActIdComp}
	S_i(u\cdot h) = a \Ab^{-1} S_i(u) \Bb,\quad (i = 1,2),
\end{equation}
and 
\begin{equation}\label{GActJ}
	S_1(u\cdot J) = \left(\begin{array}{cc}
						-V_4&V_2\\V_3&-V_1
					\end{array}\right)(u),
		\qquad
	S_2(u\cdot J) = \left(\begin{array}{cc}
						V_8&-V_6\\-V_7&V_5
					\end{array}\right)(u).
\end{equation}
This yields the following
invariant tensors%
\footnote{We take the opportunity to correct a typo in equation (11) of \cite{Hu20}.}
defined on $M$, also denoted by $S_1$ and $S_2$:
\begin{equation}\label{S1S2Tensor}
\begin{aligned}
	S_1 &= V_3\,\omega^1\omega^3 - V_1\, \omega^2\omega^3 + V_4\, \omega^1\omega^4 - V_2\, \omega^2\omega^4,\\
	S_2& = V_7\,\omega^1\w\omega^3 - V_5\,\omega^2\w\omega^3 + V_8\,\omega^1\w\omega^4 - V_6\,\omega^2\w\omega^4.
\end{aligned}
\end{equation}
In the following, however, we will work with $S_1$ and $S_2$ in their matrix form \eqref{S1S2_Mat} rather than the tensor form.

The following infinitesimal version of the $G$-action will also be useful
\begin{equation}\label{GActInfi}
	\ed S_i \equiv 	\left(\begin{array}{cc}
							\phi_4 & -\phi_2\\-\phi_3&\phi_1
						\end{array}\right) 
						S_i  +
					S_i\left(\begin{array}{cc}
							 \phi_5 &\phi_6\\\phi_7&\phi_8
						\end{array}\right)
						\mod\omega^0,\ldots,\omega^4\quad (i = 1,2).
\end{equation}

In \cite{BGG}, it is shown that $S_2 = 0$ if and only if $(M,\Ecal)$ is Euler--Lagrange; that is,
the integral manifolds are precisely the stationary points of a Lagrangian functional.
In the following, we will focus on the case of $S_1 = 0$, which 
may be regarded as an `opposite' to the Euler--Lagrange case.


\section{The $S_1 = 0$ case and first reductions}

\begin{prop}\label{S2dgnrProp}
	If $(M,\Ecal)$ satisfies
	$S_1 = 0$, then $\det(S_2) = 0$.
\end{prop}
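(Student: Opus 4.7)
The plan is to derive $\det(S_2) = 0$ from the integrability condition $d^2 = 0$ applied to the structure equations~\eqref{hMA_1ststrEqn} after substituting $V_1 = V_2 = V_3 = V_4 = 0$. A direct computation yields the pair of identities
\begin{equation*}
    d(\omega^1 \wedge \omega^2) = -\phi_0 \wedge \omega^1 \wedge \omega^2 + \omega^0 \wedge \Omega,
    \qquad
    d(\omega^3 \wedge \omega^4) = -\phi_0 \wedge \omega^3 \wedge \omega^4 + \omega^0 \wedge \Omega,
\end{equation*}
where $\Omega := V_7\,\omega^1 \wedge \omega^3 + V_8\,\omega^1 \wedge \omega^4 - V_5\,\omega^2 \wedge \omega^3 - V_6\,\omega^2 \wedge \omega^4$ is precisely $S_2$ in its 2-form realization~\eqref{S1S2Tensor}. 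Two features are key: the \emph{same} torsion 2-form $\Omega$ appears in both identities (a simplification special to $S_1 = 0$), and an elementary wedge computation gives $\Omega \wedge \Omega = -2\det(S_2)\,\omega^1 \wedge \omega^2 \wedge \omega^3 \wedge \omega^4$. Hence proving $\det(S_2) = 0$ is equivalent to showing that $\Omega$ is decomposable.

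I would then apply $d$ once more and use $d^2 = 0$. Since $\Omega \wedge (\omega^1 \wedge \omega^2) = 0$ identically---each monomial of $\Omega$ shares an index with $\{\omega^1, \omega^2\}$---the condition $d^2(\omega^1 \wedge \omega^2) = 0$ simplifies to
\begin{equation*}
    d\phi_0 \wedge (\omega^1 \wedge \omega^2) = -\omega^0 \wedge d\Omega,
    \qquad
    d\phi_0 \wedge (\omega^3 \wedge \omega^4) = -\omega^0 \wedge d\Omega,
\end{equation*}
the second being the analogous consequence of $d^2(\omega^3 \wedge \omega^4) = 0$. Writing $d\phi_0 = 2\Omega + \gamma \wedge \omega^0$ (which follows from $d^2\omega^0 = 0$), the Bianchi identity $d^2\phi_0 = 0$, combined with the above and with the transformation laws~\eqref{GActInfi} for $dV_5, \ldots, dV_8$, produces---after substituting the structure equations for $d\omega^i$ and tracking the quadratic terms arising from $V_i \cdot d(\omega^j \wedge \omega^k)$---an algebraic identity whose pure-horizontal coefficient of $\omega^0 \wedge \omega^1 \wedge \omega^2 \wedge \omega^3 \wedge \omega^4$ is a nonzero multiple of $\det(S_2) = V_5 V_8 - V_6 V_7$, forcing $\det(S_2) = 0$.

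The main obstacle is extracting the intrinsic (i.e., pseudo-connection-independent) part of the Bianchi identity. The forms $\phi_\alpha$ admit absorption freedom by horizontal $1$-forms, and the $1$-form $\gamma$ above is determined only modulo certain ambiguities; separating the genuine invariant piece from the gauge-dependent contributions requires systematic bookkeeping on the reduced $G$-structure. The \Cartan package in \Maple, referenced at the end of the introduction, is well-suited to handle this calculation.
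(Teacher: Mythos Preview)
Your preliminary computations are correct: with $S_1 = 0$, both $d(\omega^1\wedge\omega^2)$ and $d(\omega^3\wedge\omega^4)$ equal $-\phi_0\wedge(\cdot)+\omega^0\wedge\Omega$ with the \emph{same} torsion $2$-form $\Omega = S_2$, and indeed $\Omega\wedge\Omega = -2\det(S_2)\,\omega^1\wedge\omega^2\wedge\omega^3\wedge\omega^4$. The idea of extracting $\det(S_2)$ as the intrinsic part of a Bianchi identity is natural.

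However, the crucial step---your assertion that the coefficient of $\omega^0\wedge\omega^1\wedge\omega^2\wedge\omega^3\wedge\omega^4$ in the combined identity is a nonzero multiple of $\det(S_2)$---is not justified, and it does not appear to hold at the level you describe. The identities you have written down, namely $d^2\phi_0 = 0$ and $d\phi_0\wedge\omega^{12} = -\omega^0\wedge d\Omega$, are $3$-form and $4$-form equations respectively; it is unclear how a $5$-form relation is produced. Following your outline further, subtracting the two $4$-form identities gives $d\phi_0\wedge(\omega^{12}-\omega^{34})=0$, which after substituting $d\phi_0 = \pm 2\Omega + \gamma\wedge\omega^0$ and using $\Omega\wedge\omega^{12}=\Omega\wedge\omega^{34}=0$ merely constrains the auxiliary form $\gamma$; it yields no algebraic relation among $V_5,\ldots,V_8$. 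Moreover, $d\Omega$ contains the unconstrained horizontal derivatives of the $V_i$'s, so $d^2\phi_0=0$ alone cannot force a purely algebraic identity like $V_5V_8-V_6V_7=0$.

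The paper's proof (Appendix~\ref{Apdx:S2VanPf}) shows why: the obstruction appears only after further reduction. Assuming $\det(S_2)\neq 0$, one normalizes $S_2$ (to $I_2$ or to the anti-diagonal matrix according to sign), which renders four combinations of the $\phi_\alpha$ semi-basic. After absorbing torsion and extracting first-order relations such as $P_{54}=-P_{04}$, eight torsion functions remain; only \emph{then}, upon passing to a local section, writing $\phi_i = P_{ij}\omega^j$ for $i=1,2,3$, and imposing $d^2\omega^i = 0$ at the level of the derivatives $P_{ijk}$, does the polynomial system become inconsistent. In other words, the contradiction lives one differentiation deeper than your proposal reaches, and it involves the full pseudo-connection $\phi_1,\ldots,\phi_8$, not just $\phi_0$. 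To make your approach work you would need to incorporate structure equations for all the $\phi_\alpha$ and their differential consequences---at which point the argument essentially becomes the one in Appendix~\ref{Apdx:S2VanPf}.
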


The proof of this proposition relies on a \Maple computation
that rules out the case of $\det(S_2)\ne 0$; the computation is outlined
in Appendix~\ref{Apdx:S2VanPf}.

As a result of Proposition~\ref{S2dgnrProp} and the known characterization of 
$S_1 = S_2 = 0$, the only case that remains to examine
is when $\rank(S_2) = 1$. In this case, one can apply \eqref{GActIdComp} to normalize
\begin{equation}\label{S2Norm}
	S_2 = E_{21}:=\left(\begin{array}{cc}0&0\\1&0\end{array}\right).
\end{equation}
By \eqref{GActInfi}, the result of this reduction is a principal $G_1$-bundle
$\Gcal_1\subset \Gcal$ on which
\[
	\phi_1+\phi_5, \quad \phi_2, \quad\phi_6
\]
are semi-basic forms; thus, there exist functions $P_{ij}$ 
$(i = 2,5,6; j = 0,1,\ldots, 4)$ defined on $\Gcal_1$ such that
\[
	\begin{aligned}
		\phi_2& = P_{2j}\omega^j,\qquad \phi_6 = P_{6j}\omega^j,\\
		\phi_5& = -\phi_1 + P_{50}\omega^0 +P_{51}\omega^1 
				+(P_{52}+P_{21})\omega^2 + P_{53}\omega^3 
				+ (P_{54}+P_{63})\omega^4.
	\end{aligned}
\]
The new pseudo-connection matrix is
\begin{equation}\label{psuConn}
	\varphi = \left(\begin{array}{ccccc}
		\phi_0 &&&&\\
		&\phi_1&&&\\
		&\phi_3&\phi_0 - \phi_1&&\\
		&&&-\phi_1&\\
		&&&\phi_7&\phi_0+\phi_1
	\end{array}\right),
\end{equation}
and $P_{22}, P_{63}, P_{64}$ do not appear in the torsion. By adding suitable semi-basic forms to $\phi_1,\phi_3$ and $\phi_7$:
\begin{equation}\label{phi137Absrb}
\begin{aligned}
	\phi_1&\mapsto \phi_1 - P_{21}\omega^2 - P_{51}\omega^1,\\
	\phi_3&\mapsto \phi_3+P_{51}\omega^2,\\
	\phi_7&\mapsto \phi_7 + P_{53}\omega^4,
\end{aligned}
\end{equation}
and then treating the right-hand sides of \eqref{phi137Absrb} as the new $\phi_1,\phi_3$ and $\phi_7$, the torsion functions $P_{21}, P_{51}$ and $P_{53}$ will be completely absorbed
into the pseudo-connection. 

Among the $9$ remaining torsion functions, further relations are found by computing
\[
	\begin{aligned}
		\ed^2\omega^1&\equiv \phantom{+}P_{24}\,\omega^0\w\omega^3\w\omega^4\mod\omega^1,\omega^2,\\
		\ed^2\omega^2&\equiv -P_{54}\,\omega^0\w\omega^3\w\omega^4\mod\omega^1,\omega^2,\\
		\ed^2\omega^3&\equiv -P_{62}\,\omega^0\w\omega^1\w\omega^2\mod\omega^3,\omega^4,\\
		\ed^2\omega^4&\equiv \phantom{+}P_{52}\,\omega^0\w\omega^1\w\omega^2\mod\omega^3,\omega^4;
	\end{aligned}
\]
as a result, 
\[
	P_{24} = P_{54} = P_{62} = P_{52} = 0. 
\]
Now the structure equations take the form:
\begin{equation}\label{S1VanPreStrEqn}
	\ed\omega = -\varphi\w\omega + T
\end{equation}
where $\varphi$ takes the same form as \eqref{psuConn}, and the torsion 
\[
	T = \left(
		\begin{array}{c}
			\omega^1\w\omega^2+\omega^3\w\omega^4\\
			-P_{20}\omega^0\w\omega^2+P_{23}\omega^2\w\omega^3\\
			\omega^0\w\omega^3\\
			-\omega^0\w(P_{50}\omega^3+P_{60}\omega^4) - P_{61}\omega^1\w\omega^4\\
			P_{50}\omega^0\w\omega^4 - \omega^0\w\omega^1
		\end{array}
	\right).
\]

\begin{prop}
	The torsion functions $P_{20}, P_{50}, P_{60}$ must all vanish.
\end{prop}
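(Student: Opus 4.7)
The plan is to apply $d^2 = 0$ to the structure equations \eqref{S1VanPreStrEqn} on the reduced bundle $\Gcal_1$ and exploit the infinitesimal group action \eqref{GActInfi}, so as to isolate, for each of $P_{20}, P_{50}, P_{60}$, a $3$-form identity that forces its vanishing. The starting observation is that, in each of the equations for $d\omega^1, d\omega^3, d\omega^4$, only one of the free pseudo-connection forms $\phi_0, \phi_1, \phi_3, \phi_7$ appears as a coefficient of the relevant $\omega^j$; reducing the corresponding $d^2\omega^i = 0$ identity modulo that $\omega^j$ therefore removes the unknown $d\phi_\alpha \wedge \omega^j$ term and leaves a $3$-form identity purely in torsion and its covariant derivatives.

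Concretely, for $P_{60}$ I would first expand $d^2\omega^3 = 0$ and reduce modulo $\omega^3$, so that the $d\phi_1 \wedge \omega^3$ contribution disappears. The $-P_{60}\,\omega^0 \wedge \omega^4$ piece of $d\omega^3$ then contributes $-P_{60}\,d(\omega^0\wedge\omega^4)$; substituting $d\omega^0 = \omega^1\wedge\omega^2 + \omega^3\wedge\omega^4 - \phi_0\wedge\omega^0$ produces the semi-basic monomial $-P_{60}\,\omega^1\wedge\omega^2\wedge\omega^4$. The only other source of an $\omega^1\wedge\omega^2\wedge\omega^4$ coefficient comes from the $\omega^2$-component of $dP_{61}$ via $-dP_{61}\wedge\omega^1\wedge\omega^4$, yielding a Bianchi relation of the form $-P_{60} + P_{61,2} = 0$. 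A companion identity, coming from the $\omega^0\wedge\omega^2\wedge\omega^4$ coefficient of the same $d^2\omega^3$ computation, pins down $P_{61,2}$ in terms of existing torsion; combining the two (and using \eqref{GActInfi} to determine the vertical components of $dP_{61}$) yields $P_{60}=0$.

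The vanishing of $P_{50}$ follows by a symmetric computation. With $P_{60} = 0$ substituted, I would expand $d^2\omega^4 = 0$ modulo $\omega^4$ so that the $d(\phi_0+\phi_1)\wedge\omega^4$ unknown is removed, and isolate the coefficient of $\omega^1\wedge\omega^2\wedge\omega^3$; the same mechanism, with the $+P_{50}\omega^0\wedge\omega^4$ piece of $d\omega^4$ now playing the role previously played by $-P_{60}\omega^0\wedge\omega^4$, gives $P_{50}=0$. Finally, with $P_{50}=P_{60}=0$ in force, $d^2\omega^1 = 0$ reduced modulo $\omega^1$ produces an identity whose $\omega^2\wedge\omega^3\wedge\omega^4$ coefficient reduces to a nonzero multiple of $P_{20}$, completing the proof.

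The principal obstacle is bookkeeping. Each individual $3$-form coefficient extracted from $d^2\omega^i = 0$ is, a priori, a Bianchi identity coupling the desired torsion function to an unknown coframe derivative of another torsion function (typically $P_{23}$ or $P_{61}$), rather than a direct vanishing. Only by pairing two or more such identities and appealing to the transformation laws implicit in \eqref{GActInfi} do the coframe derivatives drop out and leave pure multiples of $P_{20}, P_{50}, P_{60}$. In view of the resulting computational volume, the systematic use of the \Cartan package in \Maple, as noted in the introduction, is the practical means of verifying that the three identities decouple as described.
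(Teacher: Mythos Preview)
Your approach has a genuine gap: the identities you extract from $d^2\omega^i = 0$ live at the \emph{first} Bianchi level, and this level does not force $P_{20}$, $P_{50}$, $P_{60}$ to vanish. Concretely, your computation of $d^2\omega^3 \equiv 0 \pmod{\omega^3}$ is correct in yielding the relation $P_{61,2} = P_{60}$ from the $\omega^1\wedge\omega^2\wedge\omega^4$ coefficient, but your ``companion identity'' from the $\omega^0\wedge\omega^2\wedge\omega^4$ coefficient does not pin down $P_{61,2}$; it involves $P_{60,2}$ (and $P_{20}P_{61}$) instead. More importantly, $dP_{61}$ appears in the structure equations only through $d\omega^3$, so $P_{61,2}$ occurs in exactly one Bianchi relation. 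That relation simply \emph{defines} $P_{61,2}$ as $P_{60}$; it cannot simultaneously force $P_{60} = 0$. The same obstruction applies to your proposed arguments for $P_{50}$ and $P_{20}$.

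The paper's proof makes explicit that the constraint lives one level higher. After restricting to a section and writing $\phi_i = P_{ij}\omega^j$, the $d^2\omega^i = 0$ system (your level) yields $41$ polynomial equations which are solved for $39$ of the $P_{ijk}$, with $76$ of them remaining free and \emph{no} constraint imposed on the $P_{ij}$ themselves. The vanishing of $P_{20}, P_{50}, P_{60}$ emerges only after differentiating the prolonged forms $\theta^\alpha = dP_{ij} - P_{ijk}\omega^k$ and asking when the resulting torsion can be absorbed into the tableau---a condition that is essentially a second Bianchi identity in the $P_{ij}$. Your sketch never reaches this step, and the invocation of \eqref{GActInfi} does not substitute for it: that equation governs the already-normalized $S_i$, not the second-order compatibility of the $P_{ij}$.
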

\begin{proof}
	Restricting the structure equations \eqref{S1VanPreStrEqn} to 
	any local section of $\Gcal_1$, there exist functions
	$P_{ij}$ $(i = 0,1,3,7; j =0,1\ldots,4)$ such that
	\[
		\phi_i = P_{ij}\omega^j \quad (i = 0,1,3,7).
	\]
	Since $P_{31}$ and $P_{73}$ belong to terms that are annihilated in 
	$\varphi\w\omega$, there are
	$23$ $P_{ij}$'s remaining in the equations; for them, define $P_{ijk}$ 
	$(k = 0,1,\ldots, 4)$ by
	\[
		\ed P_{ij} = P_{ijk}\omega^k.
	\]
	
	Applying $\ed^2 = 0$ to $\omega^i$ $(i = 0,1,\ldots, 4)$ yields a system of $41$ distinct polynomial 
	equations, the solution of which expresses $39$ of the $P_{ijk}$'s in terms of the
	remaining variables in the system. Thus, at this stage, $23\times 5 - 39 = 76$ $P_{ijk}$'s are `free'.
	
	Now we consider the Pfaffian system $I$ generated by the twenty-three $1$-forms $\ed P_{ij}  - P_{ijk}\omega^k$, denoted by $\theta^\alpha$. 
	Differentiating $\theta^\alpha$ yields equations of the form
		\begin{equation}\label{dPijPfaff}
			\ed \theta^\alpha \equiv A^\alpha_{\sigma j} \pi^\sigma \w\omega^j + 
								\frac{1}{2}C^\alpha_{jk}\omega^j\w\omega^k \mod I,
		\end{equation}
	where all $A^\alpha_{\sigma j}$ are constants, and the $\pi^\sigma$'s
	are the exterior derivatives of the $76$ `free' $P_{ijk}$.
	Then a computation shows that the torsion in \eqref{dPijPfaff} is absorbable (via adding semi-basic forms
	to $\pi^\sigma$) if and only if 
	$P_{20}, P_{50}$ and $P_{60}$ are zero.
\end{proof}

Rewriting $Q_1 = P_{23}$ and $Q_2 = P_{61}$, we have obtained the following.

\begin{theorem}\label{S1VanThm}
	A hyperbolic Monge--Amp\`ere system $(M,\Ecal)$ satisfies $S_1 = 0$ and $S_2\ne 0$
	if and only if each $x\in M$ has a neighborhood $U$ on which
	there exists a coframe field $(\omega^0,\omega^1,\ldots,\omega^4)$, 
	some $1$-forms $\phi_0, \phi_1, \phi_3, \phi_7$, and functions $Q_1, Q_2$ that satisfy
	\[
		\Ecal = \<\omega^0,\omega^1\w\omega^2,\omega^3\w\omega^4\>_\alg
	\]
	and the structure equations
	\begin{equation}\label{S1VanstrEqn}
		\begin{aligned}
			\ed\left(\begin{array}{c}
			\omega^0\\
			\omega^1\\
			\omega^2\\
			\omega^3\\
			\omega^4
		\end{array}\right) 
		&= -\left(
			\begin{array}{ccccc}
				\phi_0 &&&&\\
				&\phi_1&&&\\
				&\phi_3&\phi_0 - \phi_1&&\\
				&&&-\phi_1 &\\
				&&&\phi_7&\phi_0+\phi_1
			\end{array}
		\right)\w\left(\begin{array}{c}
			\omega^0\\
			\omega^1\\
			\omega^2\\
			\omega^3\\
			\omega^4
		\end{array}\right) \\
		& \qquad\qquad
			+\left(
			\begin{array}{c}
				\omega^1\w\omega^2+\omega^3\w\omega^4\\
				Q_1 \,\omega^2\w\omega^3\\
				\phantom{+}\omega^0\w\omega^3\\
				Q_2 \, \omega^4\w\omega^1\\
				-\omega^0\w\omega^1
			\end{array}
		\right).
		\end{aligned}
	\end{equation}
\end{theorem}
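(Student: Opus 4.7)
My plan is to treat the ``if'' direction as immediate and to assemble the ``only if'' direction from the reductions already carried out in this section.

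For the ``if'' direction, I would just compare \eqref{S1VanstrEqn} with the Bryant--Griffiths--Grossman normal form \eqref{hMA_1ststrEqn}: reading off the torsion shows $V_1=V_2=V_3=V_4=0$ (so $S_1=0$), together with $V_5=V_6=V_8=0$ and $V_7=1$ (so $S_2=E_{21}\ne 0$).

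For the ``only if'' direction, I would start from the $G$-structure $\Gcal$ furnished by Theorem~\ref{BGG_Thm}. Proposition~\ref{S2dgnrProp} together with the assumption $S_2\ne 0$ forces $\rank(S_2)=1$, and \eqref{GActIdComp} then lets me reduce to a principal $G_1$-subbundle $\Gcal_1\subset\Gcal$ on which $S_2$ is in the normal form $E_{21}$ of \eqref{S2Norm}; here $G_1\subset G$ is the four-dimensional stabilizer of $E_{21}$. The infinitesimal version \eqref{GActInfi} of this reduction identifies the three combinations $\phi_1+\phi_5$, $\phi_2$, $\phi_6$ as semi-basic on $\Gcal_1$, and I would expand each of them in the tautological forms $\omega^j$ with new torsion coefficients $P_{ij}$ to arrive at the pseudo-connection \eqref{psuConn}. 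Three of these coefficients can be absorbed into $\phi_1,\phi_3,\phi_7$ via the substitutions \eqref{phi137Absrb}, and the four identities $\ed^2\omega^i\equiv 0$ $(i=1,2,3,4)$ displayed just before \eqref{S1VanPreStrEqn} kill four more, namely $P_{24},P_{54},P_{62},P_{52}$. This brings the structure equations to the intermediate form \eqref{S1VanPreStrEqn}.

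The hard part will be the final reduction $P_{20}=P_{50}=P_{60}=0$, which is exactly the proposition immediately preceding the theorem. No single $\ed^2=0$ identity in the tautological forms detects these three coefficients, so I would pass to the prolonged system: expand $\ed P_{ij}=P_{ijk}\omega^k$, use $\ed^2=0$ to solve for $39$ of the $P_{ijk}$ in terms of the remaining $76$, and then examine absorbability of the intrinsic torsion of the Pfaffian system generated by the $1$-forms $\theta^\alpha=\ed P_{ij}-P_{ijk}\omega^k$ as in \eqref{dPijPfaff}. The obstruction to absorbability turns out to be precisely the non-vanishing of $P_{20},P_{50},P_{60}$; this is where the bookkeeping becomes substantial and a computer algebra system (the \Cartan package in \Maple) is essentially unavoidable. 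Once these three vanishings are established, only the torsion coefficients $P_{23}$ and $P_{61}$ survive in \eqref{S1VanPreStrEqn}, and renaming $Q_1:=P_{23}$ and $Q_2:=P_{61}$ yields exactly the structure equations \eqref{S1VanstrEqn}.
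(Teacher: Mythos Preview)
Your proposal is correct and follows essentially the same route as the paper: the theorem is stated precisely as a summary of the reductions carried out in Section~3, and you have accurately recapitulated each step (Proposition~\ref{S2dgnrProp}, the normalization \eqref{S2Norm}, the absorption \eqref{phi137Absrb}, the $\ed^2=0$ identities, and the torsion-absorbability argument of the proposition preceding the theorem). For the ``if'' direction, just make explicit that one recovers a section of $\Gcal$ from \eqref{S1VanstrEqn} by setting $\phi_2=Q_1\omega^3$, $\phi_6=Q_2\omega^1$, $\phi_4=\phi_0-\phi_1$, $\phi_5=-\phi_1$, $\phi_8=\phi_0+\phi_1$, after which the torsion comparison you describe goes through.
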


Coframe fields that satisfy the structure equations \eqref{S1VanstrEqn} are precisely local sections of the principal $G_1$-bundle $\Gcal_1$,
where $G_1\subset \GL(5,\R)$ is the stabilizer of $E_{21}$ (see \eqref{S2Norm})
under the action of $G$. More explicitly, by \eqref{GActIdComp} and \eqref{GActJ}, $G_1$ is generated by 
\[
	h  = \left(\begin{array}{ccccc}
		a&&&&\\
		&A_1&&&\\
		&A_3&a/A_1&&\\
		&&&1/A_1&\\
		&&&B_3&aA_1
	\end{array}\right) \quad\mbox{where } a, A_1\ne 0 \mbox{ and }  A_3, B_3\in \R
\]
and
\[
	\tilde J = \diag(-1,1,-1,-1,1) J.
\]
By taking the exterior derivative of the equations \eqref{S1VanstrEqn}
and computing the action of $\tilde J$ directly, we find
\begin{equation}\label{QtransIdComp}
\left.
\begin{aligned}
	\ed Q_1 &\equiv (\phi_0 - 3\phi_1)Q_1 \\
	\ed Q_2 & \equiv (\phi_0+3\phi_1)Q_2 
\end{aligned}
~\right\}  \mod \omega^0,\ldots,\omega^4,
\end{equation}
and, for $u\in \Gcal_1$,
\begin{equation}\label{QtransJtilde}
	(Q_1, Q_2)(u\cdot \tilde J) = (Q_2, - Q_1)(u).
\end{equation}
In particular, $\tilde J$ acts on $(Q_1,Q_2)\in \R^2$ as a clockwise rotation by $\pi/2$.
It is easy to see that the action of $G_1$ on $\R^2$ has 3 orbits, which gives rise to the three cases that we will examine below:
\begin{equation}\label{threeCases}
\mbox{I:}~\;  Q_1 = Q_2 = 0; \qquad\quad
\mbox{II:}~\;  Q_1\ne 0,~ Q_2 = 0;\qquad\quad
\mbox{III:}~\;  Q_1, Q_2\ne 0. 
\end{equation}

\section{Case I: $Q_1 = Q_2= 0$}

In this case, the structure equations \eqref{S1VanstrEqn} read
\begin{equation}\label{Q1Q2VanStrEqn}
	\begin{aligned}
		\ed\omega^0& = -\phi_0\w\omega^0 + \omega^1\w\omega^2+\omega^3\w\omega^4,\\
		\ed\omega^1& = -\phi_1\w\omega^1,\\
		\ed\omega^2& = -\phi_3\w\omega^1 - (\phi_0-\phi_1)\w\omega^2
						+\omega^0\w\omega^3,\\
		\ed\omega^3& = \phantom{+}\phi_1 \w\omega^3,\\
		\ed\omega^4& = -\phi_7\w\omega^3 - (\phi_0+\phi_1)\w\omega^4
					- \omega^0\w\omega^1.
	\end{aligned}
\end{equation}
By taking the exterior derivative of \eqref{Q1Q2VanStrEqn}, it is easy to show that the pseudo-connection forms $\phi_0$, $\phi_1$ satisfy the equations
\begin{equation}\label{Q1Q2VanStrEqnConn}
	\begin{aligned}
	\ed\phi_0 & = 2\,\omega^1\w\omega^3,\\
	\ed\phi_1& = A\, \omega^1\w\omega^3,
	\end{aligned}
\end{equation}
for some function $A$.

\subsection{An associated geometry}\label{Case_I_asociGeom}

From \eqref{Q1Q2VanStrEqn} and 
\eqref{Q1Q2VanStrEqnConn}, one immediately observes that $\{\omega^1,\omega^3,\phi_1\}$
generates a Pfaffian system on $\Gcal_1$ whose \emph{retracting space} (see \cite{BCG}) is itself. This implies the existence of a local submersion from $\Gcal_1$ to a 
$3$-manifold $\Wcal$ on which $\omega^1,\omega^3,\phi_1$ descend to form a local coframe field.
In fact, $\Wcal$ can be regarded as the structure bundle, with $(\omega^1,\omega^3)$ being the tautological forms, for the following data 
defined on an open subset $\Sigma$ of $\R^2$: an area form $\Omega = \omega^1\w\omega^3$, and an unordered pair of `transverse foliation by curves'
 $\foli_1$ and $\foli_2$ defining a \emph{net} on $\Sigma$, where $\foli_1$ and $\foli_2$
 are integral curves of $\ker(\omega^3)$ and $\ker(\omega^1)$, respectively. 
These are precisely the data%
\footnote{Robert Bryant informed the author of this fact.} needed to specify a Lorentzian metric $\mbox{\fontshape{ui}\selectfont g}= \omega^1\circ\omega^3$, defined up to sign, on an oriented surface $\Sigma$.
In view of these relations, we will call $(\Sigma, \Omega; \foli_1,\foli_2)$ the 
\emph{associated geometry} of a hyperbolic Monge--Amp\`ere system $(M,\Ecal)$ in Case I. Note that there is a canonical local submersion $\pi: (M,\Ecal)\to (\Sigma, \Omega;\foli_1, \foli_2)$.
Also note that the invariants of $(M,\Ecal)$ are completely encoded by
its associated geometry.

\subsection{Integration of the structure equations}\label{CaseI_intg_Section}
Now we proceed to 
integrate the structure equations \eqref{Q1Q2VanStrEqn} and \eqref{Q1Q2VanStrEqnConn} and 
determine the underlying hyperbolic Monge--Amp\`ere PDEs in coordinate form, up to contact transformations. 

We begin by noticing that both $\omega^1$ and $\omega^3$ are integrable; thus, locally 
there exist functions $x,y,f,g$ such that
\[
	\omega^1 = \e^f \ed x, \qquad \omega^3 = \e^g \ed y.
\]
Furthermore, due to the freedom of scaling $(\omega^1,\omega^3)\mapsto (\lambda\omega^1,\lambda^{-1}\omega^3)$ while adjusting $\phi_1$ by a multiple of $\ed\lambda$ without affecting the 
structure equations, we can arrange that 
\[
	g = f.
\]

Now, the equation of $\ed\phi_0$ implies that 
the $2$-form $\omega^1\w\omega^3 = \e^{2f} \ed x\w\ed y$ is closed; hence, $f$ is a function of $x,y$. Then, integrating the equation of $\ed\phi_0$ shows that there exists a function $S$ 
such that
\[
	\phi_0 = - \Phi_x\ed x + \Phi_y\ed y + \ed S,
\]
where $\Phi(x,y)$ is a function that satisfies
\begin{equation}\label{Phi}
	\Phi_{xy} = \e^{2f}.
\end{equation}

Now the equations of $\ed\omega^1$ and $\ed\omega^3$ determine 
\[
	\phi_1 = f_x\ed x - f_y \ed y,
\]	
and then by the equation of $\ed \phi_1$, the invariant $A$ has the expression
\[
	A = -\frac{2f_{xy}}{\e^{2f}}.
\]

By \eqref{Q1Q2VanStrEqn}, the Pfaffian systems generated by 
$\{\omega^1,\omega^2,\omega^3\}$
and $\{\omega^1,\omega^3,\omega^4\}$, respectively, are both Frobenius. Thus, 
there exist functions $p,q$ such that
\[
	\lbb\omega^1,\omega^2,\omega^3\rbb = \lbb\ed x, \ed y, \ed p\rbb,
	\qquad
	\lbb\omega^1,\omega^3,\omega^4\rbb = \lbb\ed x, \ed y, \ed q\rbb,
\]
where $\lbb\ldots\rbb$ indicates the span of the forms within.
We still have the freedom of applying the following transformations
\begin{equation}\label{CaseIom024trans}
	(\omega^0,\omega^2,\omega^4)\mapsto (\lambda\omega^0, \lambda\omega^2+\mu_1\omega^1, \lambda\omega^4+\mu_2\omega^3)
\end{equation}
while adjusting $\phi_0, \phi_3, \phi_7$ accordingly, without affecting the structure equations. Using this,
we can arrange that
\begin{equation}\label{CaseIom2om4}
	\omega^2 = \alpha\ed y + \e^T \ed p,\qquad
	\omega^4 = \beta\ed x + \e^{-T} \ed q
\end{equation}
for some functions $\alpha,\beta$ and $T$.
Substituting these in the equations of $\ed\omega^2, \ed\omega^4$ in 
\eqref{Q1Q2VanStrEqn} and reducing modulo $\ed x, \ed y$, we obtain
\[
\begin{aligned}
	-\e^T \ed(S + T)\w \ed p \equiv \e^{-T} \ed(S - T)\w\ed q\equiv 0 \mod \ed x, \ed y;\\
\end{aligned}
\]
this implies that $S+T$ is a function of $x,y,p$, and $S - T$ is a function of $x,y,q$.

\begin{claim}
	By suitable choices of $p$ and $q$, we can arrange that $S = T = 0$.
\end{claim}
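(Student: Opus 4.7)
\emph{Proof Plan.} The residual freedom in our choice of coordinates consists of replacements $p\mapsto \tilde p = F(x,y,p)$ and $q\mapsto \tilde q = G(x,y,q)$ (with $F_p, G_q > 0$), combined with the $\lambda$-rescaling in~\eqref{CaseIom024trans} needed to keep $\omega^2,\omega^4$ in the canonical form~\eqref{CaseIom2om4}. The plan is to compute the induced action on the pair $(S,T)$ and to show that the relations $S+T=U(x,y,p)$ and $S-T=V(x,y,q)$, already established just before the claim, can be solved away by a single quadrature in $p$ and in $q$, respectively.

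First, I would substitute $\ed p = (\ed\tilde p - F_x\ed x - F_y\ed y)/F_p$ into $\omega^2 = \alpha\,\ed y + \e^T\,\ed p$, absorb the resulting $\ed x$-term via the $\mu_1$-freedom in~\eqref{CaseIom024trans}, and treat $\omega^4$ analogously with $\mu_2$. Demanding that the coefficients of $\ed\tilde p$ and $\ed\tilde q$ remain reciprocal forces $\lambda = \sqrt{F_p G_q}$, and a short computation yields
\[
	\tilde T = T + \tfrac{1}{2}\log(G_q/F_p).
\]
To track the effect on $\phi_0$, I would use the fact that $\omega^0\mapsto\lambda\omega^0$ induces $\phi_0\mapsto\phi_0 - \ed\log\lambda$, while $\Phi(x,y)$ is unaffected (it is determined by $\Phi_{xy}=\e^{2f}$ and involves no $p$ or $q$). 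Since $\phi_0 = -\Phi_x\ed x + \Phi_y\ed y + \ed S$, this gives
\[
	\tilde S = S - \tfrac{1}{2}\log F_p - \tfrac{1}{2}\log G_q,
\]
up to a harmless additive constant. Adding and subtracting produces the key formulas
\[
	\tilde S + \tilde T = (S+T) - \log F_p, \qquad \tilde S - \tilde T = (S-T) - \log G_q.
\]

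The conclusion now follows from a direct integration. Since $S+T=U(x,y,p)$ depends only on $(x,y,p)$, setting
\[
	F(x,y,p) = \int_{p_0}^{p} \e^{U(x,y,s)}\,\ed s
\]
gives $F_p = \e^U > 0$, so $\log F_p = U(x,y,p)$ and hence $\tilde S + \tilde T = 0$. Choosing $G(x,y,q) = \int_{q_0}^{q}\e^{V(x,y,s)}\,\ed s$ analogously makes $\tilde S - \tilde T = 0$. In the new coordinates $(\tilde p,\tilde q)$ we therefore obtain $\tilde S = \tilde T = 0$, as claimed.

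The main obstacle is bookkeeping the $S$-transformation correctly: $S$ is defined only through the representation of $\phi_0$, and one must verify that $\Phi$ is genuinely held fixed and that the $\mu_1,\mu_2$ absorption steps introduce no hidden $p$- or $q$-dependence into $\Phi$. Once this is checked, the remaining manipulations are quadratures together with routine adjustments of $\phi_0,\phi_3,\phi_7$ that, by construction, preserve the structure equations~\eqref{Q1Q2VanStrEqn}.
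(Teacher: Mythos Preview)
Your proposal is correct and follows essentially the same route as the paper: both exploit the residual coordinate freedom $p\mapsto\tilde p$, $q\mapsto\tilde q$ together with the rescaling \eqref{CaseIom024trans}, derive the transformation laws $\tilde S\pm\tilde T = (S\pm T) - \log F_p$ (resp.\ $-\log G_q$), and then integrate away $S+T$ and $S-T$ separately using that each depends only on $(x,y,p)$ or $(x,y,q)$. The paper parametrizes by the inverse map $p=\Pcal(x,y,\tilde p)$ rather than your forward map $\tilde p = F(x,y,p)$, which accounts for the sign flip in the logarithms, but the content is identical; your explicit quadrature for $F$ and $G$ is a welcome addition.
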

\begin{proof}
	Consider a change of variables $p = \Pcal(x,y,\tilde p)$ and $q = \Qcal(x,y, \tilde q)$. Substituting these in 
	\eqref{CaseIom2om4}, applying transformations of the form \eqref{CaseIom024trans}, 
	and using the equation of $\ed\omega^0$, we obtain
	\[
		\tilde \omega^2 = \tilde \alpha \ed y + \e^{\tilde T} \ed \tilde p, 
		\qquad
		\tilde\omega^4 = \tilde \beta \ed x + \e^{-\tilde T} \ed \tilde q,
	\]
	and 
	\[
		\tilde \phi_0 = -\Phi_x\ed x+\Phi_y\ed y+ \ed \tilde S,
	\]
	with
	\[
		\tilde S = S+\frac{1}{2}(\ln \Pcal_{\tilde p} + \ln \Qcal_{\tilde q}), \qquad
		\tilde T = T + \frac{1}{2}(\ln \Pcal_{\tilde p} - \ln \Qcal_{\tilde q}).
	\]
	Since $S+T$ is a function of $x,y,p$, one can arrange $\tilde S+\tilde T = 0$ by choosing
	a suitable $\Pcal(x,y,\tilde p)$; similarly, one can arrange $\tilde S - \tilde T  =0 $ by choosing
	a suitable
	$\Qcal(x,y,\tilde q)$. 
\end{proof}

	With $S = T = 0$, the equations of $\ed \omega^2$ and $\ed\omega^4$ imply
	\begin{equation}\label{CaseIom0Coord}
		\e^f \omega^0 \equiv \ed\alpha - (\Phi_y+f_y) \ed p \equiv 
		 -\ed\beta - (\Phi_x - f_x) \ed q   \mod \ed x, \ed y.
	\end{equation}
	Rearranging terms, we get
	\[
		\ed(\alpha+\beta) - (\Phi_y+f_y) \ed p + (\Phi_x-  f_x) \ed q \equiv 0 \mod \ed x, \ed y.
	\]
	Since both $f$ and $\Phi$ are functions of $x,y$, the congruence above implies that there exist
	functions $\kappa$ and $H(x,y)$
	such that
	\begin{equation}\label{Case_I_alphabeta}
		\begin{aligned}
			\alpha & =\phantom{+} \kappa + \frac{1}{2}\left[(\Phi_y+f_y)p - (\Phi_x - f_x)q + H\right],\\
			\beta & = -\kappa + \frac{1}{2}\left[(\Phi_y+f_y)p - (\Phi_x - f_x)q + H\right].
		\end{aligned}
	\end{equation}
	Since $\Phi_{xy} = \e^{2f}\ne 0$, $(\Phi+f)_y$ and $(\Phi - f)_x$ cannot both vanish; thus, 
	by adding a suitable function of $x,y$ to $p$ or $q$, we can arrange that $H(x,y) = 0$. 
	Then from \eqref{CaseIom0Coord} we determine $\omega^0$ modulo $\ed x$ and $\ed y$; that is,
	there exist functions $P,Q$ such that
	\begin{equation}\label{CaseIom0Coord2}
		\e^f \omega^0 = \ed Z - P \ed x - Q \ed y,
	\end{equation}
	where
	\begin{equation}\label{Case_I_Z}
	 	Z = \kappa - \frac{1}{2}(\Phi_y+f_y) p -\frac{1}{2} (\Phi_x - f_x)q.
	\end{equation}
	
	At this stage, all the $\omega^i$'s have been put in coordinate form, and the equations of 
	$\ed\omega^i$ for $i = 1,\ldots, 4$ become identities. Thus,  it remains to substitute 
	the coordinate form of the $\omega^i$'s in the equation of $\ed\omega^0$, which reduces to 
	\begin{equation}\label{Case_I_R1R2rel}
	\begin{aligned}
		\e^{-\Phi} \ed R_1 \w\ed x + \e^{\Phi} \ed R_2\w \ed y = 0,
	\end{aligned}
	\end{equation}
	where $R_1$ and $R_2$ are defined by
	\begin{equation}\label{Case_I_R1R2}
	\begin{aligned}
		P & = \e^{2f} p + (\Phi_x + f_x) Z + \e^{f - \Phi} R_1,\\
		Q& = \e^{2f} q - (\Phi_y - f_y) Z + \e^{f+\Phi} R_2.
	\end{aligned}
	\end{equation}
	
\begin{claim}
	By suitable choices of $p$ and $q$, 
	we can arrange that $R_1 = R_2 = 0$.
\end{claim}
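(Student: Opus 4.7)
My approach is to use the coordinate freedom still available after the earlier normalizations ($S = T = 0$ and $H = 0$) to set $R_1 = R_2 = 0$. First, I would unpack the structural content of \eqref{Case_I_R1R2rel}. Expanding $\ed R_i$ in the coordinate basis $\{\ed x, \ed y, \ed Z, \ed p, \ed q\}$ on $M$ and matching coefficients, the vanishing of all terms involving $\ed Z$, $\ed p$ and $\ed q$ forces $R_1$ and $R_2$ to depend on $(x, y)$ alone, and the $\ed x \w \ed y$ component leaves the single scalar relation
\[
	\e^{-\Phi}\, R_{1, y} = \e^{\Phi}\, R_{2, x}.
\]

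Next, I would identify the residual gauge freedom on $M$. Preserving $\e^f \omega^0 = \ed Z - P\,\ed x - Q\,\ed y$ restricts the coordinate change to
\[
	p \mapsto p + a(x, y), \qquad q \mapsto q + b(x, y), \qquad Z \mapsto Z + \sigma(x, y),
\]
for functions $a, b, \sigma$ of $(x, y)$; any extraneous $\ed x, \ed y$ terms introduced into $\omega^2, \omega^4$ by the shifts of $p, q$ can be absorbed using the residual frame freedom in $\Gcal_1$ (the parameters corresponding to $A_3, B_3$). The normalization $H = 0$ contributes one linear constraint linking $a, b, \sigma$ and their first derivatives. A direct substitution into \eqref{Case_I_R1R2} then yields the transformation law
\[
	\hat R_1 = R_1 + \e^{\Phi - f}\bigl[\sigma_x - \e^{2f} a - (\Phi_x + f_x)\sigma\bigr],
	\quad
	\hat R_2 = R_2 + \e^{-\Phi - f}\bigl[\sigma_y - \e^{2f} b + (\Phi_y - f_y)\sigma\bigr].
\]

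Setting $\hat R_1 = \hat R_2 = 0$ and eliminating $\sigma$ via the $H = 0$ constraint reduces to a determined first-order linear PDE system in $(a, b)$ with $R_1, R_2$ as source. Local solvability will follow from standard theory, provided its integrability condition matches the compatibility relation from the first step. The main obstacle will be verifying this cleanly: cross-differentiating $\hat R_1 = 0$ and $\hat R_2 = 0$ and reducing via the $H = 0$ constraint should reproduce exactly $\e^{-\Phi} R_{1, y} = \e^{\Phi} R_{2, x}$ and no further obstruction. Both relations arise from $\ed \omega^0 = 0$ in different presentations, so the match is expected on structural grounds, but carrying out the bookkeeping for the gauge adjustments is the delicate part.
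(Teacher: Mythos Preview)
Your strategy—exploit the residual gauge freedom in $p$, $q$ (and $Z$) to kill $R_1$, $R_2$, then check that the integrability obstruction is precisely the relation extracted from \eqref{Case_I_R1R2rel}—is the paper's, and both your reading of \eqref{Case_I_R1R2rel} (that $R_1,R_2$ depend only on $(x,y)$ with $\e^{-\Phi}R_{1,y}=\e^{\Phi}R_{2,x}$) and your transformation formulas for $\hat R_1,\hat R_2$ are correct.

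The gap is in your accounting of the gauge parameters and the shape of the resulting PDE system. In the paper's normalizations, $Z$ is pinned to $\kappa,p,q$ by \eqref{Case_I_Z}, so the shift $\sigma$ in $Z$ is \emph{not} an independent parameter: once $p\mapsto p+a$, $q\mapsto q+b$, one is forced to take
\[
\sigma=-\tfrac{1}{2}\bigl[(a_y-b_x)+(\Phi_y+f_y)a+(\Phi_x-f_x)b\bigr].
\]
Meanwhile the $\tilde H=0$ constraint \eqref{Case_I_UVconstraint} involves only $a,b$ and their first derivatives—$\sigma$ does not appear—so it cannot be used to ``eliminate $\sigma$'' as you propose. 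Substituting the forced expression for $\sigma$ into $\hat R_1=\hat R_2=0$ yields two \emph{second}-order equations in $a,b$; together with the first-order constraint $\tilde H=0$ this is an \emph{overdetermined} system (three equations for two unknowns), not a determined first-order one. The paper handles this by a Cartan--K\"ahler analysis, showing the system is involutive exactly when \eqref{Case_I_R1R2rel} holds—which confirms your expectation that the integrability condition matches, but reaches it through involutivity of an overdetermined system rather than local solvability of a determined one. If instead you genuinely allow $\sigma$ to vary independently (relaxing \eqref{Case_I_Z}), the system does become easier to solve, but the downstream derivation of the PDE \eqref{Q1Q2VanCoordForm} uses \eqref{Case_I_Z} to express $\alpha$ in terms of $Z$, so that route would require reworking the subsequent computation.
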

\begin{proof}
	Putting 
	$\tilde p = p+U(x,y)$, $\tilde q  = q+ V(x,y)$
	and substituting in \eqref{CaseIom2om4} (note that we have arranged $T = 0$), one obtains
	$\tilde \alpha = \alpha - U_y$ and  $\tilde \beta = \beta - V_x$;
	then \eqref{Case_I_alphabeta} implies
	\[
		\tilde\kappa = \kappa - \frac{1}{2}(U_y - V_x).
	\]
	The condition $\tilde H = 0$ puts a constraint between $U$ and $V$:
	\begin{equation}\label{Case_I_UVconstraint}
		U_y + V_x = -(\Phi_y+f_y)U - (\Phi_x - f_x)V.
	\end{equation}
	Substituting $\tilde p, \tilde q, \tilde \kappa$ in the right-hand side of
	\eqref{Case_I_Z} determines $\tilde Z$, 
	which, via $\ed Z - P \ed x - Q\ed y = \ed \tilde Z - \tilde P \ed x  - \tilde Q\ed y$,
	determines $\tilde  P$ and $\tilde Q$; then $\tilde R_1, \tilde R_2$ can be determined 
	through \eqref{Case_I_R1R2} (for $\tilde P,\tilde Q$). Now the condition $\tilde R_1 = \tilde R_2 = 0$, together
	with \eqref{Case_I_UVconstraint}, form a system of second-order PDEs for $U,V$. 
	Since we work in the real analytic category, a standard Cartan--K\"ahler analysis shows that this system
	is involutive if and only if $R_1$ and $R_2$ satisfy
	the condition \eqref{Case_I_R1R2rel}.
	This proves the claim.
\end{proof}

Expressing $\e^f \omega^1\w\omega^2$ in the $(x,y,Z,P, Q)$-coordinates and then replacing $\ed Z$ by 
$P\ed x + Q\ed y$, we obtain
\[
\begin{aligned}
	\e^f\omega^1\w\omega^2&\equiv 
			\ed x\w\Big\{\ed P + 
			\Big[ (\Phi_y - f_y)P -(\Phi_x+f_x) Q \\
			&\qquad	\qquad\qquad\qquad  
					- \big((\Phi_x+f_x)(\Phi_y-f_y ) + f_{xy} \big)Z \Big]\ed y\Big\}
			\mod\omega^0.
\end{aligned}
\]
This completes the integration of the structure equations \eqref{Q1Q2VanStrEqn} and \eqref{Q1Q2VanStrEqnConn}. To summarize, we have obtained the following.
\begin{theorem}\label{CaseICoordForm_Thm}
	Up to contact transformations, any hyperbolic Monge--Amp\`ere system satisfying 
		$S_1 = 0$, $S_2\ne 0$ and  $Q_1 = Q_2 = 0$ locally corresponds to a linear hyperbolic Monge--Amp\`ere PDE of the form
	\begin{equation}\label{Q1Q2VanCoordForm}
		\begin{aligned}
		&z_{xy} + (\Phi_y - f_y) z_x - (\Phi_x+f_x) z_y \\
		&\qquad\qquad-
		 \big[(\Phi_x+f_x)(\Phi_y - f_y) + f_{xy}\big] z = 0,
		\end{aligned}
	\end{equation}
	where $f = f(x,y)$ is an arbitrary function of two variables, and $\Phi(x,y)$ is any function that satisfies
	\[
		\Phi_{xy} = \e^{2f}.
	\] 
	For such equations, the function
	\[
		A = -\frac{2f_{xy}}{\e^{2f}}
	\]
is a local invariant with respect to contact transformations. 
\end{theorem}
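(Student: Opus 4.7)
The plan is to integrate the structure equations \eqref{Q1Q2VanStrEqn}--\eqref{Q1Q2VanStrEqnConn} by successively introducing coordinates adapted to the integrable subsystems they expose, normalizing via the residual coframe freedom, and then reading off the underlying PDE from the coordinate form of $\ed\omega^0 = -\phi_0\w\omega^0+\omega^1\w\omega^2+\omega^3\w\omega^4$. Because the case is so reduced, each structure-equation identity propagates almost immediately into a coordinate constraint, so the work is organized as a sequence of coordinate reductions rather than a single calculation.

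First I would use $\ed\omega^1 = -\phi_1\w\omega^1$ and $\ed\omega^3 = \phi_1\w\omega^3$ to conclude that $\omega^1,\omega^3$ are integrable, writing $\omega^1 = \e^f\ed x$ and $\omega^3 = \e^g\ed y$ and using the scaling $(\omega^1,\omega^3)\mapsto(\lambda\omega^1,\lambda^{-1}\omega^3)$ to arrange $g = f$. The equation $\ed\phi_0 = 2\,\omega^1\w\omega^3$ then forces $\e^{2f}\ed x\w\ed y$ to be closed, so $f = f(x,y)$, and integrates to $\phi_0 = -\Phi_x\ed x+\Phi_y\ed y+\ed S$ with $\Phi_{xy}=\e^{2f}$. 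The same pair of equations pin down $\phi_1 = f_x\ed x - f_y\ed y$, and $\ed\phi_1 = A\,\omega^1\w\omega^3$ reads off $A = -2f_{xy}/\e^{2f}$.

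Next I would exploit Frobenius on $\{\omega^1,\omega^2,\omega^3\}$ and $\{\omega^1,\omega^3,\omega^4\}$ to produce first integrals $p,q$. The remaining coframe freedom $(\omega^0,\omega^2,\omega^4)\mapsto(\lambda\omega^0,\lambda\omega^2+\mu_1\omega^1,\lambda\omega^4+\mu_2\omega^3)$ lets me write $\omega^2 = \alpha\,\ed y+\e^T\ed p$ and $\omega^4 = \beta\,\ed x+\e^{-T}\ed q$. Plugging into $\ed\omega^2,\ed\omega^4$ modulo $\ed x,\ed y$ shows that $S+T$ depends only on $(x,y,p)$ and $S-T$ only on $(x,y,q)$; reparametrizing $p,q$ by suitable $\Pcal(x,y,\tilde p)$, $\Qcal(x,y,\tilde q)$ then kills both $S$ and $T$. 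The same equations force $(\alpha,\beta)$ into the symmetric form \eqref{Case_I_alphabeta}, after which a translation in $p$ or $q$ eliminates the residual $H(x,y)$, yielding the contact-type identity $\e^f\omega^0 = \ed Z - P\ed x - Q\ed y$ with $Z$ as in \eqref{Case_I_Z}.

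The main obstacle is the final normalization $R_1 = R_2 = 0$ in \eqref{Case_I_R1R2}, which is what ultimately forces linearity. The last available gauge consists of translations $p\mapsto p+U(x,y)$, $q\mapsto q+V(x,y)$ subject to the compatibility \eqref{Case_I_UVconstraint} that preserves $H = 0$; demanding $\tilde R_1 = \tilde R_2 = 0$ on top of this produces a second-order overdetermined PDE system for $(U,V)$. I expect its integrability condition to reduce precisely to \eqref{Case_I_R1R2rel}, which is the residue of $\ed^2\omega^0 = 0$ and is therefore automatic; a Cartan--K\"ahler involutivity check in the real-analytic category then yields the desired $(U,V)$. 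Once $R_1 = R_2 = 0$, expanding $\e^f\omega^1\w\omega^2\mod\omega^0$ in $(x,y,Z,P,Q)$-coordinates and replacing $\ed Z$ by $P\ed x + Q\ed y$ recovers \eqref{Q1Q2VanCoordForm} directly. Finally, the contact-invariance of $A$ follows from \eqref{Q1Q2VanStrEqnConn}: since both $\ed\phi_1$ and $\omega^1\w\omega^3$ are $G_1$-invariant (the scaling factors $A_1^{\pm 1}$ cancel in $\omega^1\w\omega^3$, and $\ed\phi_1$ is exact in $\phi_1$ which shifts only by semi-basic terms), $A$ is a $G_1$-invariant function on $\Gcal_1$ and hence descends to a well-defined function on $M$.
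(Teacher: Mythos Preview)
Your proposal is correct and follows essentially the same route as the paper's proof in Section~\ref{CaseI_intg_Section}: the same sequence of coordinate introductions for $\omega^1,\omega^3$, integration of $\phi_0,\phi_1$, Frobenius for $p,q$, the normalizations $S=T=0$ and $H=0$, and the final Cartan--K\"ahler argument to kill $R_1,R_2$ via translations $(U,V)$ constrained by \eqref{Case_I_UVconstraint}. Your closing remark on the invariance of $A$ makes explicit what the paper leaves implicit in the structure equation \eqref{Q1Q2VanStrEqnConn}.
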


\begin{example}
	The simplest example occurs when $f = 0$, for which the invariant $A=0$.
	With the choice $\Phi(x,y) = xy$, the underlying
	hyperbolic Monge--Amp\`ere equation is
	\begin{equation}\label{Case_I_MostFlat}
		z_{xy} + xz_x - yz_y - xyz = 0.
	\end{equation}
	In a sense, this equation is the `flattest' among all hyperbolic 
	Monge--Amp\`ere equations satisfying $S_1 = 0$ and $S_2\ne 0$.
\end{example}


\section{Case II: $Q_1\ne 0$, $Q_2 = 0$}

In this case, by applying the $G_1$-action and using \eqref{QtransIdComp} and \eqref{QtransJtilde}, one can normalize
\begin{equation}\label{Q1Q2onezero}
	(Q_1, Q_2) = (1,0).
\end{equation}
By \eqref{QtransIdComp}, on the sub-bundle $\Gcal_2\subset \Gcal_1$ defined by \eqref{Q1Q2onezero}, 
\[
	\phi_0 - 3\phi_1
\]
is semi-basic; thus, there exist functions $P_{0j}$ $(j = 0,1,\ldots, 4)$ such that
\[
	\phi_0 = 3\phi_1 + P_{0j}\omega^j.
\]

By the computation
\[
	\ed^2\omega^1\equiv -(P_{00}\omega^0 + P_{04}\omega^4)\w\omega^2\w\omega^3\mod\omega^1,
\]
we obtain
\[
	P_{00} = P_{04} =0;
\]
then
\[
	\ed^2\omega^1 = - (\ed\phi_1+P_{01}\omega^2\w\omega^3-\phi_3\w\omega^3)\w\omega^1,
\]
which implies
\begin{equation}\label{CaseIIdphi1_interm}
	\ed\phi_1  \equiv (-P_{01}\omega^2+\phi_3)\w\omega^3\mod\omega^1.
\end{equation}
On the other hand, $\ed^2\omega^3 = 0$ implies
	\[
		\ed\phi_1\equiv 0\mod\omega^3.
	\]
Combining this with \eqref{CaseIIdphi1_interm}, it is easy to see that there exists a function 
$A$ 
	such that
	\begin{equation}\label{CaseIIdphi1}
		\ed\phi_1 = (-P_{01}\omega^2+\phi_3 + A\omega^1)\w\omega^3.
	\end{equation}
	
Now
\[
	\ed^2\omega^0\equiv (\ed P_{03}+P_{03}\phi_1+3\phi_3)\w\omega^0\w\omega^3
		\mod \omega^1,\omega^2,\omega^4;
\]
from this we obtain the infinitesimal transformation of $P_{03}$ along the group fiber:
\[
	\ed P_{03}\equiv -\phi_1P_{03} -3\phi_3 \mod\omega^0,\omega^1,\ldots\omega^4.
\]
Due to the `$-3\phi_3$'-term, a further reduction is thus available, and we will normalize to 
\[
	P_{03}= 0.
\]
The result is a sub-bundle $\Gcal_3\subset \Gcal_2$ on which $\phi_3$ 
becomes semi-basic, and we write
\[
	\phi_3 = P_{30}\omega^0+P_{31}\omega^1+(P_{32}+P_{01})\omega^2+P_{33}\omega^3
				+P_{34}\omega^4.
\]
By computing $\ed^2\omega^0$ modulo $\omega^1,\omega^2$, we find
\[
	P_{34} = 0.
\]

Now the structure equations read
\begin{equation}\label{CaseIIstrEqn}
	\ed\omega = -\varphi\w\omega+T,
\end{equation}
where
\[
	\varphi = \left(\begin{array}{ccccc}
		3\phi_1&&&&\\
		&\phi_1&&&\\
		&&2\phi_1&&\\
		&&&-\phi_1&\\
		&&&\phi_7&4\phi_1
	\end{array}\right)
\]
and
\[
	T = \left(
	\begin{array}{c}
		\omega^0\w(P_{01}\omega^1+P_{02}\omega^2)+\omega^1\w\omega^2+\omega^3\w\omega^4\\
		\omega^2\w\omega^3\\
		-P_{30}\omega^0\w\omega^1+\omega^0\w\omega^3+P_{32}\omega^1\w\omega^2+P_{33}\omega^1\w\omega^3\\
		0\\
		-P_{01}\omega^1\w\omega^4-P_{02}\omega^2\w\omega^3-\omega^0\w\omega^1
	\end{array}
	\right).
\]
By taking the exterior derivative of these structure equations, we find the infinitesimal transformation of 
the torsion functions along each fiber, where all congruences below are computed modulo $\omega^0,\ldots,\omega^4$:
\begin{equation}\label{CaseII5torTrans}
	\begin{array}{lll}
		\ed P_{01}\equiv \phi_1P_{01},
		\quad&\ed P_{02}\equiv \phantom{+}2\phi_1P_{02},
		\quad &\ed P_{30}\equiv 2\phi_1P_{30},\\
		\ed P_{32}\equiv \phi_1P_{32},
		&\ed P_{33}\equiv -2\phi_1P_{33}.
		&
	\end{array}
\end{equation}
Thus, all $P_{01}, P_{02}, P_{30}, P_{32}, P_{33}$ are relative invariants, which scale along the fiber. 
It is natural to ask whether they can simultaneously vanish.

\begin{lemma}\label{CaseII5torLemma}
	The torsion functions $P_{01}, P_{02}, P_{30}, P_{32}, P_{33}$ cannot all be zero.
\end{lemma}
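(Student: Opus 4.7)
I would argue by contradiction, supposing that $P_{01}=P_{02}=P_{30}=P_{32}=P_{33}=0$ identically. By the transformation laws \eqref{CaseII5torTrans} this is a $G_1$-invariant condition, so the assumption is consistent on $\Gcal_3$. Under it, $\phi_3=P_{31}\omega^1$ satisfies $\phi_3\w\omega^1=0$, the torsion in \eqref{CaseIIstrEqn} collapses to purely constant coefficients, and \eqref{CaseIIdphi1} specializes to $\ed\phi_1=B\,\omega^1\w\omega^3$ for the scalar $B:=A+P_{31}$. The structure equations become
\[
\ed\omega^0=-3\phi_1\w\omega^0+\omega^1\w\omega^2+\omega^3\w\omega^4,\qquad
\ed\omega^1=-\phi_1\w\omega^1+\omega^2\w\omega^3,
\]
\[
\ed\omega^2=-2\phi_1\w\omega^2+\omega^0\w\omega^3,\qquad
\ed\omega^3=\phi_1\w\omega^3,\qquad
\ed\omega^4=-\phi_7\w\omega^3-4\phi_1\w\omega^4-\omega^0\w\omega^1.
\]

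The strategy is then to exploit $\ed^2=0$ on two of the tautological forms to pin $B$ to two incompatible values. First I would apply $\ed(\ed\omega^0)=0$. The weights $(-3,-1,-2,1,-4)$ of $\phi_1$ on the coframe cause the quadratic-in-$\phi_1$ pieces $\phi_1\w\omega^1\w\omega^2$ and $\phi_1\w\omega^3\w\omega^4$ to cancel pairwise between the $3\phi_1\w\ed\omega^0$, $\ed(\omega^1\w\omega^2)$, and $\ed(\omega^3\w\omega^4)$ contributions; the only surviving 3-form is a multiple of $\omega^0\w\omega^1\w\omega^3$ with coefficient $-3B+2$, so this term forces $B=\tfrac{2}{3}$. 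Next I would apply $\ed(\ed\omega^2)=0$; here the $\phi_1\w\omega^0\w\omega^3$ contributions cancel by the same mechanism, leaving the coefficient $2B+1$ on $\omega^1\w\omega^2\w\omega^3$ and forcing $B=-\tfrac{1}{2}$. Since $\tfrac{2}{3}\ne-\tfrac{1}{2}$, the contradiction is immediate.

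The main point requiring care is that $\phi_7$ is still a non-semi-basic pseudo-connection form on $\Gcal_3$ with unknown exterior derivative; any appearance of $\phi_7$ in either calculation would inject undetermined torsion and block a clean identification of $B$. Fortunately, $\phi_7$ occurs only in $\ed\omega^4$, so neither $\ed^2\omega^0$ nor $\ed^2\omega^2$ involves it, and the argument rests entirely on the four $\phi_7$-free structure equations. Beyond careful sign bookkeeping in the $\ed^2$ expansions, I anticipate no real obstacle; the contradiction is essentially a reflection of the fact that $\phi_1$ weighs $\omega^0$ and $\omega^2$ differently, so the Bianchi-type relation $\ed^2=0$ cannot be satisfied once all deforming torsion is switched off.
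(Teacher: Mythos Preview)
Your argument is correct and essentially identical to the paper's: both compute $\ed^2\omega^0$ and $\ed^2\omega^2$ under the vanishing hypothesis and extract the incompatible constraints $A+P_{31}=\tfrac{2}{3}$ and $A+P_{31}=-\tfrac{1}{2}$ (you simply package $A+P_{31}$ as $B$). One small wording issue: $\ed^2\omega^0$ \emph{does} pick up $\ed\omega^4$ through the $\omega^3\w\omega^4$ term in $\ed\omega^0$, but the $\phi_7$ contribution there is $\omega^3\w\phi_7\w\omega^3=0$, so your conclusion that $\phi_7$ drops out stands.
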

\begin{proof}
	If all the $5$ torsion functions were zero, 
	then differentiating $\ed\omega^0$ and $\ed\omega^2$ and using 
	\eqref{CaseIIdphi1}, we would obtain
	\[
	\begin{aligned}
		\ed^2\omega^0 & = (-3P_{31} - 3A + 2)\,\omega^0\w\omega^1\w\omega^3,\\
		\ed^2\omega^2 & = (1+2P_{31}+2A)\,\omega^1\w\omega^2\w\omega^3.
	\end{aligned}
	\]
	Thus, $A$ must be equal to $-P_{31} + (2/3)$ and $-P_{31} - (1/2)$ simultaneously, which
	is impossible.
\end{proof}

By Lemma~\ref{CaseII5torLemma}, at least one of 
$P_{01}, P_{02}, P_{30}, P_{32}, P_{33}$ is nonzero, and then it can be normalized to $\pm 1$
according to \eqref{CaseII5torTrans}. Instead of analyzing the various cases that 
arise when one asks \emph{which} of these torsion functions are nonzero, we will proceed 
assuming that one of these functions (without specifying which one) is nonzero and  normalized, and the result is 
a sub-bundle $\Gcal_4\subset \Gcal_3$ on which the restriction of $\phi_1$ becomes
semi-basic:
\[
	\phi_1 = P_{1j}\omega^j.
\]
This introduces $5$ new torsion functions, among whom 
$P_{10}, P_{11}, P_{12}, P_{14}$ are invariants of the structure, and 
$P_{13}$ transforms by
\begin{equation}\label{CaseIIP13trans}
	\ed P_{13} \equiv \phi_7 P_{14}\mod\omega^0,\omega^1,\ldots,\omega^4.
\end{equation}
Thus, whether further reduction is possible depends on whether $P_{14}$ is nonzero; regarding this, 
we have the following.

\begin{prop}\label{P14VanProp}
	$P_{14}$ must vanish.
\end{prop}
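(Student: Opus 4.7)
The plan is to argue by contradiction: assume $P_{14}\ne 0$, push the Cartan reduction one step further, and extract an inconsistency from $\ed^2 = 0$.

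Under the assumption $P_{14}\ne 0$, \eqref{CaseIIP13trans} shows that $P_{13}$ transforms transitively along the $\phi_7$-direction of the $G_1$-fiber, so one may pass to the sub-bundle $\Gcal_5 \subset \Gcal_4$ defined by $P_{13}=0$. Writing \eqref{CaseIIP13trans} in the form $\ed P_{13} = P_{14}\phi_7 + R_{13}$ with $R_{13}$ semi-basic, the condition $P_{13}=0$ forces $\ed P_{13} = 0$, which on $\Gcal_5$ pins down $\phi_7 = -R_{13}/P_{14}$; hence $\phi_7$ becomes semi-basic, $\phi_7 = P_{7j}\omega^j$, with each $P_{7j}$ a rational function of $P_{14}$ and the remaining torsion of $\Gcal_4$. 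Thus $\Gcal_5$ is an $\{e\}$-structure, and the structure equations \eqref{CaseIIstrEqn} together with \eqref{CaseIIdphi1} form a closed polynomial-differential system in the torsion functions and their coframe derivatives.

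The key observation is that on $\Gcal_4$ the $\phi_7\wedge\omega^3$-component of $\ed\phi_1 = \ed(P_{1j}\omega^j)$ cancels automatically between the contribution $P_{14}\phi_7\wedge\omega^3$ coming from $\ed P_{13}\wedge\omega^3$ and the contribution $-P_{14}\phi_7\wedge\omega^3$ coming from $P_{14}\,\ed\omega^4$; this is why no direct obstruction to $P_{14}\ne 0$ is visible at the $\Gcal_4$ level. On $\Gcal_5$, however, both $P_{13}$ and $\phi_7$ are pinned, and what was a free cancellation becomes a genuine identity tying the $P_{7j}$'s to $P_{14}$. Propagating this identity through the remaining integrability conditions --- specifically $\ed^2\omega^4=0$ and $\ed^2\omega^0 = 0$, compared against the known semi-basic formula \eqref{CaseIIdphi1} for $\ed\phi_1$ --- eventually produces an equation of the shape $P_{14}\cdot Q = 0$, where $Q$ is a polynomial in the torsion that cannot vanish in view of Lemma~\ref{CaseII5torLemma} and the normalizations made in the derivation of $\Gcal_4$. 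This contradicts $P_{14}\ne 0$ and establishes the proposition.

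The main obstacle is bookkeeping rather than conceptual: after the reduction to $\Gcal_5$ there are on the order of two dozen torsion functions $P_{0j}, P_{1j}, P_{3j}, P_{7j}$ together with $A$ and their first coframe-derivatives in play, and isolating the decisive polynomial $Q$ from the system of $\ed^2\omega^i=0$ relations requires a systematic expansion and elimination. In practice this is handled with the Maple \Cartan{} package, in keeping with the computational methodology adopted throughout the paper.
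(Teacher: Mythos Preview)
Your proposal is correct and follows essentially the same route as the paper's own proof (detailed in Appendix~\ref{Apdx:P14VanPf}): assume $P_{14}\ne 0$, use \eqref{CaseIIP13trans} to normalize $P_{13}=0$ and obtain an $e$-structure, then extract a contradiction from the $\ed^2\omega^i=0$ identities by machine computation. The paper makes this explicit via a case split on whether $P_{01}$ vanishes; in each branch a short chain of $P_{ijk}$-relations forces an equation of exactly the shape $P_{14}\cdot Q=0$ you describe (with $Q$ equal to $P_{01}$ or $P_{33}$, nonzero by normalization or by Lemma~\ref{CaseII5torLemma}).
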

\begin{proof}
The proof relies on a \Maple computation that shows that
if $P_{14}$ was nonzero, then after reducing to $P_{13} = 0$, 
the resulting structure equations (for an $e$-structure) would be non-involutive. Details of this computation
are included in Appendix~\ref{Apdx:P14VanPf}.
\end{proof}

At this stage, the remaining torsion functions are
\[
	P_{01}, P_{02}, P_{30}, P_{32}, P_{33}; P_{10}, P_{11}, P_{12}, P_{13}.
\]
In the following, we consider the cases $P_{01}\ne 0$ and $P_{01}=0$ separately.

\subsection{Case IIa. $P_{01}\ne 0$}
By \eqref{CaseII5torTrans}, we may assume that a normalization has been made such that
\[
	P_{01} = 1.
\]
Now computing 
\begin{equation}\label{CaseIIaP02}
\begin{aligned}
	\ed^2\omega^1&\equiv (4P_{10}P_{13}+P_{12} - P_{30})\,\omega^0\w\omega^1\w\omega^3\\
						&\qquad\qquad  
						- \ed P_{10}\w\omega^0\w\omega^1+ \ed P_{13}\w\omega^1\w\omega^3\mod\omega^2,\omega^4,\\
	\frac{1}{2}\ed^2\omega^2 &\equiv 
		\left( 4P_{10} P_{13} + P_{12} + \frac{1}{2}P_{02} + \frac{1}{2}P_{30}\right)
			\omega^0\w\omega^2\w\omega^3 \\
			&\qquad\qquad
			- \ed P_{10}\w\omega^0\w\omega^2
			+\ed P_{13}\w\omega^2\w\omega^3\mod\omega^1,\omega^4
\end{aligned}
\end{equation}
shows that
\[
	P_{02} = -3P_{30}.
\]
Using this and computing 
\begin{equation}\label{CaseIIaP10}
	\begin{aligned}
		\ed^2\omega^3& \equiv (2P_{10}P_{11} - P_{30}P_{12}+P_{10})\,\omega^0\w\omega^1\w\omega^3\\
				&\qquad\qquad
				+\ed P_{10}\w\omega^0\w\omega^3 + \ed P_{11}\w\omega^1\w\omega^3
				\mod\omega^2,\omega^4,\\
		-\frac{1}{4}\ed^2\omega^4&\equiv \left(2P_{10}P_{11} - P_{30}P_{12} +\frac{3}{4}P_{10}+\frac{3}{4}P_{30}^2 \right)\omega^0\w\omega^1\w\omega^4\\
				&\qquad\qquad
				+\ed P_{10}\w\omega^0\w\omega^4 + \ed P_{11}\w\omega^1\w\omega^4
				\mod\omega^2,\omega^3,
	\end{aligned}
\end{equation}
we obtain
\[
	P_{10} = 3P_{30}^2.
\]
Now there are $6$ torsion functions remaining:
\begin{equation}\label{CaseIIa_6Pij}
	P_{11}, P_{12}, P_{13}, P_{30}, P_{32}, P_{33},
\end{equation}
which are scalar invariants of the structure.

\subsubsection{The structure equations}

For the $P_{ij}$'s in \eqref{CaseIIa_6Pij}, define $P_{ijk}$ ($30$ in total) by
\[
	\ed P_{ij} = P_{ijk}\omega^k.
\]
Applying $\ed^2 = 0$ to $\omega^0,\ldots, \omega^4$ and, in particular, reducing 
the equation of $\ed^2\omega^4$ modulo $\omega^3$,
we obtain a system of $28$ polynomial equations in the $P_{ij}$ and the $P_{ijk}$'s.
The solution of this system expresses $18$ among the $P_{ijk}$'s in terms of the remaining 
variables, and thus there are $12$ $P_{ijk}$'s that are `free' at this stage. 

Then consider the Pfaffian system $I$ generated by the six
$\ed P_{ij} - P_{ijk}\omega^k$. The tableau
of $I$ is $12$-dimensional; that the torsion must
be absorbable imposes a single condition on one of the $P_{ijk}$'s (in our computation, $P_{132}$).
Updating the tableau, which is now $11$-dimensional, the torsion of $I$ can be 
completely absorbed. 

For the structure equations obtained at this stage, see \eqref{Apdx:CaseIIaStrEqn_om}
and \eqref{Apdx:CaseIIaStrEqn_As} in Appendix~\ref{Apdx:StrEqn}, where
the $P_{ij}$'s in \eqref{CaseIIa_6Pij} are renamed, in the same order, as $A_\rho$ $(\rho = 1,2,\ldots, 6)$,
while the $P_{ijk}$'s are renamed as $A_{\rho,k}$; $\phi_7$ is renamed as `$\phi$'.

Incidentally, 
the tableau of $I$ is involutive with the Cartan characters
\begin{equation}\label{CaseIIaCartanChar_crude}
	(s_1, s_2, s_3, s_4, s_5) = (6,4,1,0,0).
\end{equation}
Had the structure equations been for an $e$-structure, one could 
apply Cartan's third fundamental theorem (see \cite[Theorem~3 and Remark~8]{Bryant14}) 
and conclude right away that the germs of hyperbolic Monge--Amp\`ere systems
in case IIa depend on precisely $1$ arbitrary function of $3$ variables. However, we do not have
an $e$-structure yet; thus, more work is needed to obtain the generality. 
(See also Remark~\ref{Cartan3rdRmk} below.)

\subsubsection{The generality} 

\begin{prop}\label{CaseIIaGen}
	Locally the hyperbolic Monge--Amp\`ere systems in Case IIa 
	can be uniquely determined (up to contact transformations) by specifying $1$ arbitrary function of $3$ variables.
\end{prop}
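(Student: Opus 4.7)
The plan is to apply the Cartan--K\"ahler theorem to the involutive prolongation $I$ constructed just above the proposition. The preceding subsection has already reduced the equivalence problem to a structure on the principal bundle $\Gcal_4 \to M$ carrying six scalar invariants $A_1,\ldots,A_6$, and has assembled the Pfaffian system
\[
    I = \bigl\< \ed A_\rho - A_{\rho,k}\omega^k \ :\ \rho = 1,\ldots,6 \bigr\>,
\]
whose absorbed tableau is $11$-dimensional with Cartan characters $(s_1,\ldots,s_5) = (6,4,1,0,0)$.

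First I would verify involutivity via Cartan's test, checking that the dimension of admissible $2$-jets equals $s_1 + 2 s_2 + 3 s_3 = 17$. In practice this is a linear-algebraic calculation on the constraints inherited from $\ed^2 = 0$, and is precisely the kind of bookkeeping handled by the \Cartan package in \Maple. Given involutivity, the Cartan--K\"ahler existence theorem produces, in the real-analytic category, integral manifolds of $I$ depending on the last nonzero character's worth of free data, namely $s_3 = 1$ arbitrary function of $3$ variables.

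Next I would establish a bijective correspondence (up to contact equivalence) between these integral manifolds and germs of hyperbolic Monge--Amp\`ere systems in Case IIa. In one direction, each integral manifold yields a local section of $\Gcal_4$ together with prescribed values of the $A_\rho$'s, which by Theorem~\ref{S1VanThm} and the subsequent reductions recovers a unique such Monge--Amp\`ere system up to diffeomorphism. In the other direction, any Case IIa system lifts canonically to $\Gcal_4$ and determines such an integral. Since $\Gcal_4$ has been maximally reduced, the residual one-dimensional $\phi_7$-fibre acts trivially on contact-equivalence classes, so the Cartan count descends cleanly.

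The main obstacle is the formal verification of involutivity, which although straightforward in principle is computationally intricate; this is where the machine computation is essential. A secondary conceptual point is to confirm that the residual $\phi_7$-freedom does not inflate the functional generality --- this follows because a single pseudo-connection direction can only contribute functions of fewer variables, which cannot alter the leading count "$1$ arbitrary function of $3$ variables" dictated by $s_3$.
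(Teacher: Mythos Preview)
Your proposal has a genuine gap, and the paper's own Remark~\ref{Cartan3rdRmk} pinpoints exactly where. The Cartan characters $(6,4,1,0,0)$ are computed for the Pfaffian system $I$ living on the $G_4$-structure $\Gcal_4$, where the pseudo-connection form $\phi_7$ is still undetermined. Cartan's third fundamental theorem, as stated and proved in \cite{Bryant14}, applies to $e$-structures: it guarantees existence of coframes realizing given involutive structure equations and identifies the generality with the last nonzero character. There is no version of that theorem available for $G$-structures with $G\ne\{e\}$, and your appeal to Cartan--K\"ahler does not substitute for it. The integral manifolds of $I$ are not local sections of $\Gcal_4$, and the passage from ``integral manifolds of $I$ depend on one function of three variables'' to ``Monge--Amp\`ere systems in Case IIa depend on one function of three variables'' is precisely the step that lacks justification.

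Your final paragraph acknowledges the residual $\phi_7$-freedom but dismisses it with the heuristic that a single pseudo-connection direction ``can only contribute functions of fewer variables.'' This is plausible but not a proof; a priori the extra freedom could interact with the existing tableau in a way that changes the last nonzero character. The paper circumvents this by reducing to an honest $e$-structure: it splits into the subcases $A_4\ne 0$ and $A_4=0$, and in each uses a derivative of an invariant ($A_{6,3}$ in the first subcase, $A_{7,3}$ in the second) to normalize away the last group parameter. Only then does it apply Cartan's third fundamental theorem, obtaining characters $(9,6,1,0,0)$ when $A_4\ne 0$ and, after a prolongation, $(9,2,0,0,0)$ when $A_4=0$. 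The fact that the final $s_3$ agrees with your $s_3=1$ is noted by the author as an interesting coincidence deserving further investigation, not as something your argument establishes.
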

\begin{proof}
	The proof relies on the following observation: While $A_1,A_2,\ldots, A_6$ in \eqref{Apdx:CaseIIaStrEqn_om} are scalar invariants, it is 
	still possible for their $\omega$-derivatives
	to be non-constant along the fibers of
	$\Gcal_4$. When this holds, reduction to an $e$-structure is possible; and then 
	Cartan's third fundamental theorem  \cite[Theorem~3]{Bryant14} can be applied to obtain 	
	the generality.
	
	To proceed, we will consider the cases of $A_4 \ne 0$ and $A_4 = 0$ separately.
	
	If $A_4 \ne 0$, then, by adding a suitable multiple of $\omega^3$ to $\omega^4$, one can 
	arrange that $A_{6,3} = 0$. This exhausts the $G_4$-action and results in an $e$-structure.
	Thus, we write
	\[
		\phi = A_{7+j}\omega^j.
	\]
	Since $A_{10}$ is eliminated by $\phi\w\omega^3$, there are
	$10$ invariant functions $A_\rho$ $(\rho = 1,2,\ldots,9, 11)$. For $A_7, A_8, A_9, A_{11}$, define 
	$A_{\rho,j}$ by $\ed A_{\rho} = A_{\rho,j}\omega^j$. Now $\ed^2\omega^4 = 0$ 
	yields $6$ polynomial equations ($\ed^2\omega^i = 0$ for $i = 0,\ldots, 3$ are identities),
	which are solved for $6$ of the 
	$A_{\rho,j}$'s $(\rho = 7,8,9,11)$. Regarding the Pfaffian system generated by the ten
	 $\ed A_{\rho} - A_{\rho,j}\omega^j$, the tableau is $25$-dimensional; absorbability
	of the torsion implies that
	\[
		A_{11} = \frac{1}{3A_4} (18A_4^2A_6+6A_3A_4+9A_5+5).
	\]
	Assigning this relation, $\ed^2\omega^i = 0$ $(i = 0,\ldots, 4)$ remain identities; 
	regarding the Pfaffian system generated by $\ed A_\rho - A_{\rho,j}\omega^k$ for 
	$\rho = 1,2,\ldots, 9$, the tableau is $19$-dimensional, and the torsion is absorbable only 
	when $A_{6,2}$ is a 
	specific fourth-degree polynomial in $A_2,\ldots, A_7$. The updated tableau 
	is $18$-dimensional, and the torsion is absorbable only when $A_{5,2}, A_{6,1}$ are 
	specific functions of the other variables. This reduces the dimension of the tableau to $16$;
	the new torsion can be absorbed. One can verify that the tableau 
	has the Cartan characters 
	\begin{equation}\label{CaseIIaCartanChar_true}
		(s_1,s_2,s_3, s_4, s_5) = (9,6,1,0,0), 
	\end{equation}
	and the dimension of its first prolongation is $24$. Thus, the tableau is involutive.
	By Cartan's third fundamental theorem, the germs of 
	hyperbolic Monge--Amp\`ere systems in Case IIa satisfying $A_4\ne 0$ depend
	on precisely $1$ arbitrary function of $3$ variables.
	
	If $A_4 = 0$, then from the vanishing of $\ed A_4$ we obtain $A_2 = 0$ and $A_5 = -1/3$;
	and from $\ed A_2$ and $\ed A_5$ we obtain 
	$A_{6,2} = (7-A_3)/3$. Computing $\ed^2A_6$ and reducing modulo $\omega^0,\omega^1,\omega^2$, we obtain
	\begin{equation}\label{CaseIIa_A63}
		\ed A_{6,3} \equiv -\frac{2}{3}\omega^4 \mod\omega^0,\omega^1,\omega^2,\omega^3.
	\end{equation}
	Since $A_{6,3}$ is an invariant, we shall rename it as $A_7$; defining $A_{7,j}$ by 
	$\ed A_7 = A_{7,j}\omega^j$, 
	\eqref{CaseIIa_A63} implies that, by adding a multiple of 
	$\omega^3$ into $\omega^4$, we can arrange that $A_{7,3} = 0$. This yields an $e$-structure,
	and we can write
	\[
		\phi = A_{8+j}\omega^j.
	\]
	Thus, the primary invariants are $A_1, A_3, A_6, A_7, A_8, A_9, A_{10}, A_{12}$.
	Proceeding as in the case of $A_4\ne 0$, we find, after a somewhat lengthy computation,
	that, in order for the structure equations to be involutive, we must have
	\[
		A_{12} = \frac{1}{2}(15 A_3 - 7),
	\]
	and all $A_{\rho,j}$'s $(\rho = 1,3,6,7,8,9,10)$ are specific functions of the $A_\rho$'s and 
	10 `free' $A_{\rho,j}$'s. However, regarding the Pfaffian system $I$ generated by the seven
	$\ed A_\rho - A_{\rho,j}\omega^j$, the torsion is absorbable while the tableau is not
	involutive---the Cartan characters are $(s_1,s_2,s_3,s_4,s_5)=(7,2,1,0,0)$, and the first prolongation of the tableau is
	$13$ dimensional. Thus, we need to prolong. This amounts to augmenting $I$ by adjoining, for each of the $10$ `free' $A_{\rho,j}$'s, the form $\ed A_{\rho,j} - A_{\rho,jk}\omega^k$. The result is a Pfaffian system $I^{(1)}$. Further computation, which involves annihilating the intrinsic torsion for $I^{(1)}$, 
	  shows that only $11$ of the $A_{\rho,jk}$'s are free; at this stage, the torsion is absorbable,
	  and 
	 the tableau of $I^{(1)}$ is involutive with the Cartan characters
	\[
		(s_1,s_2,s_3,s_4,s_5) = (9,2,0,0,0).
	\]
	Thus, we conclude that the $A_4 = 0$ case has the local generality of $2$ arbitrary functions of $2$ variables.
	
	Combining both cases, the proposition follows.
	\end{proof}

\begin{remark}\label{Cartan3rdRmk}
	We observe that the last Cartan character in \eqref{CaseIIaCartanChar_crude} coincides with
	that in \eqref{CaseIIaCartanChar_true}. Since the latter is obtained via 
	a much lengthier computation, it is natural to ask whether the computation of \eqref{CaseIIaCartanChar_crude} already implies the generality. To this end, it would be desirable to have
	a version of Cartan's
	third fundamental theorem that can be applied to $G$-structure
	equations with $G\ne \{e\}$ and yield generality results. As of writing, 
 	the author isn't aware of such a theorem.
\end{remark}

\subsection{Case IIb. $P_{01} = 0$} Similar to \eqref{CaseIIaP02} and
\eqref{CaseIIaP10}, computing
\[
\begin{aligned}
	\ed^2\omega^1\mod\omega^2,\omega^4,\qquad& \ed^2\omega^2\mod\omega^1,\omega^4,\\
	\ed^2\omega^3\mod\omega^2,\omega^4,\qquad&\ed^2\omega^4\mod\omega^2,\omega^3
\end{aligned}
\] 
yields
\[
		P_{02} = P_{30}=0.
\]
Now
\[
	\begin{aligned}
		\frac{1}{3}\ed^2\omega^0&\equiv -(3P_{12}P_{13}+P_{11})\,\omega^0\w\omega^2\w\omega^3 \\
			&\qquad\qquad
				+\ed P_{12}\w\omega^0\w\omega^2+\ed P_{13}\w\omega^0\w\omega^3
				\mod\omega^1,\omega^4,\\
		\ed^2\omega^1& \equiv -(3P_{12}P_{13}+ P_{11}-P_{32})\,\omega^1\w\omega^2\w\omega^3
				\\
			&\qquad\qquad
				+\ed P_{12}\w\omega^1\w\omega^2+\ed P_{13}\w\omega^1\w\omega^3
				\mod\omega^0,\omega^4
	\end{aligned}
\]
implies that
\[
	P_{32}=0.
\]
Since $P_{01}, P_{20}, P_{30}, P_{32}$ are all zero, Lemma~\ref{CaseII5torLemma} implies that
$P_{33}$ must be nonzero; thus, by \eqref{CaseII5torTrans}, 
we will assume that $P_{33}$ has been normalized to
\[
	P_{33} = \lambda\in \{\pm 1\}.
\]
Now 
\[
	\ed^2\omega^2\equiv 2P_{10}\,\omega^0\w\omega^1\w\omega^3\mod\omega^2,\omega^4
\]
shows that 
\[
	P_{10}=0;
\]
moreover, we have
\[
	\begin{aligned}
		\frac{1}{3}\ed^2\omega^0&\equiv -\left(2P_{13}P_{11}+\lambda P_{12}-\frac{2}{3}\right)
			\omega^0\w\omega^1\w\omega^3\\
				&\qquad\qquad
				+\ed P_{11}\w\omega^0\w\omega^1 + \ed P_{13}\w\omega^0\w\omega^3
				\mod\omega^2,\omega^4,\\
		-\frac{1}{2}\ed^2\omega^2&\equiv -\left(2P_{13}P_{11}+\frac{1}{2}\right)
			\omega^1\w\omega^2\w\omega^3\\
				&\qquad\qquad
				+\ed P_{11}\w\omega^1\w\omega^2 - \ed P_{13}\w\omega^2\w\omega^3
				\mod\omega^0,\omega^4,
	\end{aligned}
\]
which implies
\[
	P_{12} = \frac{7}{6\lambda}.
\]

At this stage, the only invariants remaining are $P_{11}$ and $P_{13}$, and $\lambda$ serves
as a discrete invariant, equalling either $1$ or $-1$.

\subsubsection{The structure equations}
For $P_{11}$ and $P_{13}$, define $P_{11k}, P_{13k}$ $(k = 0,1,\ldots, 4)$ by
		\[
			\ed P_{ij} = P_{ijk}\omega^k \quad \mbox{for }(i,j) = (1,1), (1,3).
		\]
	Applying $\ed^2 = 0$ to $\omega^0,\ldots, \omega^4$ and reducing 
the equation of $\ed^2\omega^4$ modulo $\omega^3$, the resulting polynomial equations
can be solved for the $P_{ijk}$'s, with the result:
	\[
		\begin{aligned}
			\ed P_{11} &= P_{111}\omega^1 - \frac{7}{6\lambda}P_{11}\omega^2
					+\left(2P_{13}P_{11}+P_{131} + \frac{1}{2}\right)\omega^3,\\
			\ed P_{13} & = -\frac{7}{6\lambda}\omega^0 + P_{131}\omega^1
				 - \frac{2P_{11}\lambda +7P_{13}}{2\lambda}\omega^2 + P_{133}\omega^3.
		\end{aligned}
	\]

	By renaming $P_{11}$ and $P_{13}$ as $A_1$ and $A_2$, respectively, and $\phi_7$ as $\phi$, we record the structure equations obtained at this stage
	in \eqref{Apdx:CaseIIbStrEqn_om} and 
	\eqref{Apdx:CaseIIbStrEqn_As} of Appendix~\ref{Apdx:StrEqn}.

	Incidentally, the Pfaffian system generated by $\ed A_1 - A_{1,k}\omega^k$
	and $\ed A_2 - A_{2,k}\omega^k$ has the tableau
	\begin{equation}\label{CaseIIb_tableau_crude}
		\left(\begin{array}{ccccc}
			0& \pi_1 &0& \pi_2&0\\
			0& \pi_2 & 0& \pi_3 & 0
		\end{array}\right),
	\end{equation}
	where $(\pi_1,\pi_2,\pi_3) = (\ed A_{1,1}, \ed A_{2,1},\ed A_{2,3})$,
	and the torsion can be completely absorbed. It is easy to see that \eqref{CaseIIb_tableau_crude} is involutive 
	with the Cartan characters
	$(s_1,s_2,s_3,s_4,s_5) = (2,1,0,0,0).$ However, as in Case IIa, we do not have an 
	$e$-structure yet, and more work is needed to determine the generality. (As in Case IIa, 
	it is interesting to note that the last Cartan character for \eqref{CaseIIb_tableau_crude}
	is consistent with the conclusion of Proposition~\ref{CaseIIbGen} below. Whether this is a
	coincidence deserves further investigation. See also Remark~\ref{Cartan3rdRmk}.)

\subsubsection{The generality}

\begin{prop}\label{CaseIIbGen}
	Locally the hyperbolic Monge--Amp\`ere systems in Case IIb
	can be uniquely determined (up to contact transformations) by specifying $1$ arbitrary function of $2$ variables.
\end{prop}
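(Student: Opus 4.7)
The plan is to adapt, almost verbatim, the strategy used in the proof of Proposition~\ref{CaseIIaGen}, especially its $A_4 = 0$ sub-case, since in Case IIb we are similarly left with only two scalar invariants $A_1, A_2$, a discrete invariant $\lambda \in \{\pm 1\}$, and a one-dimensional residual group freedom encoded by the pseudo-connection form $\phi = \phi_7$. The first step is to uncover a relative invariant among the derivatives $A_{1,1}, A_{2,1}, A_{2,3}$ whose infinitesimal transformation along the $G_4$-fiber carries a nontrivial $\phi$-component; such a coefficient can then be normalized to zero to yield an $e$-structure. Concretely, I would compute $\ed^2 A_i$ $(i = 1,2)$ from the structure equations \eqref{Apdx:CaseIIbStrEqn_om} and \eqref{Apdx:CaseIIbStrEqn_As}, extract the congruences of $\ed A_{1,1}, \ed A_{2,1}, \ed A_{2,3}$ modulo $\omega^0,\omega^1,\ldots,\omega^4$, and, by analogy with \eqref{CaseIIa_A63}, expect exactly one of them (most likely $\ed A_{2,3}$, given its position) to exhibit a $\phi$-term, thereby permitting the normalization.

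With $\phi$ reduced to semi-basic form, write $\phi = A_{3+k}\,\omega^k$, introducing the new primary invariants $A_3, A_4, A_5, A_6, A_7$, one of which is absorbed into the connection via $\phi\wedge\omega^3$, as in the Case IIa analysis. The next step is to apply $\ed^2 = 0$ to all five $\omega^i$, solve the resulting polynomial identities for as many $A_{\rho,k}$'s as possible in terms of $A_1, A_2$, and then form the Pfaffian system
\[
  I = \bigl\<\ed A_\rho - A_{\rho,j}\omega^j\bigr\>_{\alg}.
\]
Absorbability of its torsion will typically pin down several of the remaining $A_\rho$'s in terms of $A_1, A_2$, after which I would compute the Cartan characters of the resulting tableau.

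If the tableau is involutive with last nonvanishing character $s_2 = 1$ (as the crude count \eqref{CaseIIb_tableau_crude} already suggests), then Cartan's third fundamental theorem directly yields the stated generality of $1$ arbitrary function of $2$ variables. Should the tableau fail to be involutive, I would prolong once, adjoin $\ed A_{\rho,j} - A_{\rho,jk}\omega^k$ for the free coefficients, use $\ed^2 = 0$ and torsion absorption to cut the $A_{\rho,jk}$'s down to a minimal set, and check involutivity of $I^{(1)}$; the expected Cartan characters of $I^{(1)}$ are of the form $(s_1, 1, 0, 0, 0)$, again giving the same generality.

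The main obstacle I anticipate is twofold. First, the $\phi$-component of the relevant $\ed A_{\rho,k}$ might vanish on a subvariety of $\Gcal_4$, forcing a case-split in the spirit of the $A_4 \ne 0$ versus $A_4 = 0$ dichotomy of Case IIa; each branch would then require its own involutivity (or prolongation) analysis, and one must verify that all branches yield a generality no greater than $1$ function of $2$ variables. Second, because $\lambda \in \{\pm 1\}$ is a genuine discrete invariant, the two sign branches should be treated in parallel, and one must confirm that the final Cartan character count is the same in both, so that the uniform statement of Proposition~\ref{CaseIIbGen} is justified. Once these points are dispatched, the conclusion follows by a routine, though computationally lengthy, application of Cartan's third fundamental theorem, exactly as in Proposition~\ref{CaseIIaGen}.
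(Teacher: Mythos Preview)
Your overall strategy matches the paper's, but there is a concrete slip in the mechanism you describe for reducing to an $e$-structure. You expect one of $A_{1,1}, A_{2,1}, A_{2,3}$ to be a \emph{relative} invariant whose differential carries a nontrivial $\phi$-component modulo the $\omega^i$. In fact all three are genuine invariants: since $\ed A_1, \ed A_2$ in \eqref{Apdx:CaseIIbStrEqn_As} contain no $\omega^4$-term and $\phi$ enters the structure equations \eqref{Apdx:CaseIIbStrEqn_om} only via $\ed\omega^4 = -\phi\wedge\omega^3+\cdots$, collecting $\phi\wedge\omega^j$ terms in $\ed^2 A_i = 0$ forces $\ed A_{1,1}\equiv\ed A_{2,1}\equiv\ed A_{2,3}\equiv 0 \bmod \omega^0,\ldots,\omega^4$. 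The analogy you cite, equation~\eqref{CaseIIa_A63}, actually displays an $\omega^4$-term, not a $\phi$-term, and that is exactly what occurs here too: the paper's computation \eqref{CaseIIb_dA23} gives $\ed A_{2,3} \equiv -\tfrac{7}{6\lambda}\,\omega^4 \bmod \omega^0,\omega^1,\omega^2,\omega^3$. Writing $A_3 := A_{2,3}$, the nonvanishing of $A_{3,4} = -\tfrac{7}{6\lambda}$ means that the residual $G_4$-action (which shifts $\omega^4$ by multiples of $\omega^3$) moves the \emph{next}-level coefficient $A_{3,3}$, and it is $A_{3,3}$ that gets normalized to zero. So the relative invariant used for the reduction is a second $\omega$-derivative of $A_2$, not a first derivative; correspondingly $A_3$ must be carried along as an additional primary invariant in the Pfaffian system.

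Once this is corrected your remaining plan coincides with the paper's: write $\phi = A_{4+j}\omega^j$ (so the primary invariants are $A_1,\ldots,A_6,A_8$, with $A_7$ absorbed by $\phi\wedge\omega^3$), impose $\ed^2\omega^i = 0$, and repeatedly annihilate the intrinsic torsion of the Pfaffian system $\langle \ed A_\rho - A_{\rho,j}\omega^j\rangle$. The paper finds that this process eventually yields $A_8 = \tfrac{6\lambda}{7}A_1 + 10A_2$ and several further relations, after which the tableau is already involutive with Cartan characters $(4,1,0,0,0)$. Neither of your anticipated obstacles arises: since $-\tfrac{7}{6\lambda}$ never vanishes, no case-split is needed, the argument is uniform in $\lambda\in\{\pm 1\}$, and no prolongation is required.
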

\begin{proof}
	Using \eqref{Apdx:CaseIIbStrEqn_om} and \eqref{Apdx:CaseIIbStrEqn_As}, 
	computing $\ed^2 A_2$ yields
	\begin{equation}\label{CaseIIb_dA23}
		\ed^2 A_2 \equiv -\omega^3\w\left(\frac{7}{6\lambda}\omega^4 + \ed A_{2,3}\right)
		\mod\omega^0, \omega^1,\omega^2.
	\end{equation}
	Since $A_{2,3}$ is an invariant, we rename it as $A_3$, and define $A_{3,j}$ by
	$\ed A_3 = A_{3,j}\omega^j$. By \eqref{CaseIIb_dA23}, $A_{3,4}\ne0$, and thus
	one can add a multiple of $\omega^3$ into $\omega^4$ to arrange that 
	$A_{3,3}=0$. This yields an $e$-structure, and we can write
	\[
		\phi = A_{4+j}\omega^j.
	\]
	Noting that $A_7$ is eliminated by $\phi\w\omega^3$, 
	apart from $\lambda$, the invariants are now $A_{\rho}$ for $\rho = 1,\ldots, 6,8$.
	Computing $\ed^2\omega^4$ now yields $6$ polynomial equations, which can be solved
	for $6$ of the $A_{\rho,j}$'s (in our computation, $A_{4,1}$, $A_{4,2}$, $A_{4,4}$, 
	$A_{5,2}$, $A_{5,4}$, $A_{6,4}$); $\ed^2 = 0$ applied to $\omega^i$ $(i = 0,\ldots, 3)$
	yields identities, as expected. 
	
	Now consider the Pfaffian system $I$ generated by the seven $\ed A_\rho - A_{\rho,j}\omega^j$'s  for $\rho = 1,\ldots, 6,8$. The tableau is $20$-dimensional, and the torsion is absorbable 
	only when
	\[
		A_{3,0} = A_1+\frac{7}{\lambda} A_2,\quad
		A_{3,2} = -\left(2A_{2,1} - \frac{7}{\lambda}A_2^2+\frac{14}{3\lambda}A_3+\frac{1}{2}\right),
		\quad
		A_{3,4}= -\frac{7}{6\lambda}.
	\]
	The updated tableau is $17$-dimensional, and the new torsion is absorbable only when
	\[
		A_8 = \frac{6\lambda}{7}A_1 + 10 A_2
	\]
	and $A_{2,1}, A_{3,1}$ are specific functions of $A_\rho$ and $\lambda$. Using these relations,
	$\ed^2\omega^i = 0$ $(i = 0,\ldots, 4)$ remain identities, and the primary invariants
	(apart from $\lambda$) are $A_\rho$ for $\rho = 1,\ldots,6$. Regarding the Pfaffian
	system generated by the six $\ed A_\rho - A_{\rho,j}\omega^j$, the tableau is
	$10$-dimensional, and the torsion is absorbable when $A_{4,0}, A_{4,3}, A_{5,0}, A_{6,0}, A_{6,3}$ are 
	specific functions of the other variables. The updated tableau is $5$-dimensional and is involutive
	with the Cartan characters
	\[
		(s_1,s_2,s_3,s_4,s_5) = (4,1,0,0,0),
	\]
	while the torsion is absorbable. Thus, by Cartan's third fundamental theorem, 
	the germs of hyperbolic Monge--Amp\`ere systems in Case IIb depend precisely
	on $1$ arbitrary function of $2$ variables.
	\end{proof}

\subsection{Conclusion}
Combining Propositions~\ref{CaseIIaGen} and \ref{CaseIIbGen}, the local generality of 
hyperbolic Monge--Amp\`ere systems in Case II is determined, as we state in the following.

\begin{theorem}\label{CaseII_GenThm}
	Locally the hyperbolic Monge--Amp\`ere systems satisfying 
	$S_1=0$, $S_2\ne 0$ and $Q_1\ne 0$, $Q_2=0$ can be uniquely determined (up to contact transformations) by specifying $1$ arbitrary function of $3$ variables.
\end{theorem}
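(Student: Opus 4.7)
The plan is to split Case II into two subcases according to the relative invariant $P_{01}$, established just above the theorem: Case IIa where $P_{01}\ne 0$ (normalize to $P_{01}=1$ using \eqref{CaseII5torTrans}), and Case IIb where $P_{01}=0$. In each subcase I would run Cartan's equivalence method: apply $\ed^2=0$ successively to $\omega^0,\ldots,\omega^4$ to extract polynomial relations among the torsion functions, absorb what can be absorbed into the pseudo-connection, normalize any remaining relative invariants, and continue until the structure group has been reduced enough that the Pfaffian system generated by the derivatives of the scalar invariants has an involutive tableau. Then apply Cartan's third fundamental theorem (in the form of \cite[Theorem~3]{Bryant14}) to read off the generality from the last nonzero Cartan character.

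For Case IIa, applying $\ed^2\omega^1,\ed^2\omega^2$ reduced mod $\omega^2,\omega^4$ and $\omega^1,\omega^4$ respectively should force $P_{02}=-3P_{30}$, and applying $\ed^2\omega^3,\ed^2\omega^4$ similarly should force $P_{10}=3P_{30}^2$. Six scalar invariants $(P_{11},P_{12},P_{13},P_{30},P_{32},P_{33})$ remain. To reach an $e$-structure one must spend the last fiber parameter, which acts on $\omega^4$ by shifts $\omega^4\mapsto \omega^4+t\omega^3$; this normalization is possible precisely when $A_4\ne 0$ (the invariant playing that role after the renaming used in Appendix~\ref{Apdx:StrEqn}). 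In the generic branch $A_4\ne 0$, after writing $\phi = A_{7+j}\omega^j$ and propagating $\ed^2=0$, absorption of torsion should pin down enough $A_{\rho,j}$'s that the resulting tableau is involutive with Cartan characters $(9,6,1,0,0)$, yielding generality $1$ arbitrary function of $3$ variables. In the degenerate branch $A_4=0$, the vanishing of $\ed A_4$ propagates to $A_2=0$, $A_5=-1/3$, and a further prolongation is needed; the outcome is an involutive tableau with smaller last nonzero character, giving only $2$ arbitrary functions of $2$ variables, which is subordinate.

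For Case IIb the bookkeeping is lighter: the analogous $\ed^2\omega^i$ computations should force $P_{02}=P_{30}=P_{32}=P_{10}=0$; Lemma~\ref{CaseII5torLemma} then forces $P_{33}\ne 0$, and it can be normalized to $\lambda\in\{\pm 1\}$. A further $\ed^2\omega^0$ and $\ed^2\omega^2$ computation should pin $P_{12}=\tfrac{7}{6\lambda}$, leaving only $P_{11},P_{13}$ as continuous invariants. The congruence
\[
\ed^2 A_2\equiv -\omega^3\wedge\Bigl(\tfrac{7}{6\lambda}\omega^4+\ed A_{2,3}\Bigr)\mod\omega^0,\omega^1,\omega^2
\]
shows that $A_{3,4}\ne 0$ where $A_3:=A_{2,3}$, so the remaining fiber parameter can be spent to arrange $A_{3,3}=0$, producing an $e$-structure. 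Repeated absorption of torsion and application of $\ed^2=0$ should yield an involutive tableau with characters $(4,1,0,0,0)$, giving generality $1$ arbitrary function of $2$ variables.

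Combining the two subcases, the maximal generality is attained in Case IIa with $A_4\ne 0$, which dominates the other branches and yields $1$ arbitrary function of $3$ variables, as claimed. The main obstacle I expect is the same one the paper flags in Remark~\ref{Cartan3rdRmk}: after the first pass the structure is only a $G_4$-structure, not an $e$-structure, and Cartan's third theorem is not directly applicable; one must exploit subtler remaining fiber freedom (e.g.\ the $\omega^4\mapsto\omega^4+t\omega^3$ shift, available only when a certain invariant is nonzero) to eliminate it. This forces a case split on the vanishing of that invariant, and in the degenerate branch one additional prolongation is needed, after which verifying involutivity requires careful absorption of higher-order torsion---the step best delegated to the \Cartan package.
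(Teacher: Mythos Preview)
Your proposal is correct and follows essentially the same route as the paper's own proof: the same split into Cases IIa and IIb via $P_{01}$, the same relations $P_{02}=-3P_{30}$, $P_{10}=3P_{30}^2$ in IIa and $P_{02}=P_{30}=P_{32}=P_{10}=0$, $P_{12}=7/(6\lambda)$ in IIb, the same use of the $\omega^4\mapsto\omega^4+t\omega^3$ freedom to reach an $e$-structure (with the $A_4\ne 0$/$A_4=0$ split in IIa and the $A_{2,3}$-based normalization in IIb), and the same Cartan characters $(9,6,1,0,0)$ and $(4,1,0,0,0)$ in the main branches. One small refinement: in the $A_4=0$ branch of IIa you will still need to reach an $e$-structure before prolonging, and the paper does this by observing that $\ed A_{6,3}\equiv -\tfrac{2}{3}\omega^4$ modulo lower forms, so the invariant $A_7:=A_{6,3}$ has nonzero $\omega^4$-derivative and one can normalize $A_{7,3}=0$; only after that does the non-involutive tableau appear and the prolongation become necessary.
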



\section{Case III: $Q_1, Q_2\ne 0$}

 In this case, by applying a $G_1$-action, one can normalize to
\[
	Q_1 = Q_2 = 1.
\]
The result is a sub-bundle $\Gcal_2\subset\Gcal_1$ on which the restriction of $\phi_0$ and 
$\phi_1$ become semi-basic; thus, there exist functions $P_{0j}, P_{1j}$ $(j = 0,1,\ldots, 4)$
such that
\[
	\phi_0 = P_{0j}\omega^j,\qquad \phi_1 = P_{1j}\omega^j.
\]
Substituting these in the structure equation \eqref{S1VanstrEqn}, we compute
\[
\begin{aligned}
	\ed^2\omega^1& \equiv\phantom{+}  (3P_{10} - P_{00})\,\omega^0\w\omega^2\w\omega^3&&\mod\omega^1,\omega^4,\\
	\ed^2\omega^3& \equiv\phantom{+}   (3P_{10} +P_{00})\,\omega^0\w\omega^1\w\omega^4&&\mod\omega^2,\omega^3,\\
	\ed^2\omega^1&\equiv\phantom{+}   (3P_{14} - P_{04})\,\omega^2\w\omega^3\w\omega^4&&\mod\omega^0,\omega^1,\\
	\ed^2\omega^3&\equiv -(3P_{12}+P_{02})\,\omega^1\w\omega^2\w\omega^4&&\mod\omega^0,\omega^3,
\end{aligned}
\]
which implies that
\[
	P_{00} = P_{10} = 0, \quad P_{04} = 3P_{14}, \quad P_{02} = -3P_{12}.
\]

Furthermore, computing $\ed^2\omega^0$ and reducing modulo 
$\{\omega^2,\omega^3,\omega^4\}$ and $\{\omega^1,\omega^2,\omega^4\}$, respectively,
we obtain
\begin{equation}\label{CaseIII_P01P03}
\left.
\begin{aligned}
	\ed P_{01}&\equiv -3\phi_3P_{12}\\
	\ed P_{03}&\equiv \phantom{+}3\phi_7 P_{14}
\end{aligned}
\right\}\mod\omega^0,\omega^1,\ldots,\omega^4.
\end{equation}
Now, whether further structure reduction is possible depends on whether 
$P_{12}$ and $P_{14}$ are nonzero. Regarding this, we have the following.

\begin{lemma}\label{CaseIII_P12P14NonVan}
	Both $P_{12}$ and $P_{14}$ must be nonzero.
\end{lemma}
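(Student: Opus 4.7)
The plan is to argue by contradiction, using the discrete $\tilde J$-symmetry to handle both assertions at once. From \eqref{QtransJtilde}, the involution $\tilde J$ swaps $Q_1$ with $Q_2$ (up to a sign that is absorbed by the connected part of $G_1$) and correspondingly exchanges the $(\omega^1,\omega^3)$-side of the structure with the $(\omega^2,\omega^4)$-side. Tracking this through the normalization $Q_1 = Q_2 = 1$, one sees that it interchanges the roles of $(\phi_3, P_{12})$ and $(\phi_7, P_{14})$. Hence it suffices to establish $P_{12}\ne 0$; the nonvanishing of $P_{14}$ follows by applying the same argument after a $\tilde J$-reduction.

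Suppose for contradiction that $P_{12} = 0$. Combined with the already-established $P_{00} = P_{10} = 0$, $P_{02} = -3P_{12}$ and $P_{04} = 3P_{14}$, the pseudo-connection forms restricted to $\Gcal_2$ simplify to
\[
\phi_0 = P_{01}\omega^1 + P_{03}\omega^3 + 3P_{14}\omega^4, \qquad \phi_1 = P_{11}\omega^1 + P_{13}\omega^3 + P_{14}\omega^4.
\]
The first relation in \eqref{CaseIII_P01P03} collapses to $\ed P_{01}\equiv 0\mod\omega^0,\ldots,\omega^4$, so $P_{01}$ becomes a scalar invariant of the $G_2$-structure.

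The plan is then to mimic the contradiction strategy of Lemma~\ref{CaseII5torLemma}. First, compute $\ed^2\omega^1 = 0$ with the new expression for $\phi_1$; reducing modulo $\omega^1$ yields a closed-form expression for $\ed\phi_1$ (analogous to \eqref{CaseIIdphi1}) that introduces at most one new scalar function $A$. Next, substitute this expression into $\ed^2\omega^0 = 0$ and $\ed^2\omega^2 = 0$, and read off the coefficients of the top-degree terms $\omega^0\w\omega^1\w\omega^3$ and $\omega^1\w\omega^2\w\omega^3$, respectively. As in Lemma~\ref{CaseII5torLemma}, I expect these two independent equations to force $A$ to equal two distinct rational expressions in the remaining invariants simultaneously, producing the desired contradiction.

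The main obstacle I anticipate is that the forms $\phi_3$ and $\phi_7$ remain only partially constrained under the assumption $P_{12} = 0$ (in contrast to Case~II, where at the analogous stage most of the pseudo-connection had already been pinned down), so additional $\ed^2$-identities for $\omega^3$ and $\omega^4$ may be required to eliminate spurious free parameters before the contradiction surfaces cleanly. A \Maple computation via the \Cartan package, parallel in spirit to those referenced in Appendices~\ref{Apdx:S2VanPf} and \ref{Apdx:P14VanPf}, is likely the most efficient way to carry out the bookkeeping.
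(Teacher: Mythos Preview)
Your overall contradiction strategy is sound, and the $\tilde J$-symmetry reduction is a nice touch, but the specific plan has a structural flaw and misses a much shorter argument.

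First, the analogy with Lemma~\ref{CaseII5torLemma} and equation~\eqref{CaseIIdphi1} does not carry over. In Case~II at that stage, $\phi_1$ was still a genuine pseudo-connection form, so $\ed^2\omega^1$ could be used to pin down $\ed\phi_1$ up to a single unknown $A$. In Case~III, however, normalizing $Q_1=Q_2=1$ already forces \emph{both} $\phi_0$ and $\phi_1$ to be semi-basic; there is no free function $A$ to introduce---the unknowns are the derivatives $P_{1jk}$, and there are many of them. Moreover, reducing $\ed^2\omega^1$ modulo $\omega^1$ kills precisely the term $-\ed\phi_1\wedge\omega^1$, so that reduction cannot yield the information about $\ed\phi_1$ you are hoping for. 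Your plan, as written, would stall at step one.

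The paper's proof avoids all of this. It does not assume $P_{12}=0$ at the outset; instead it computes, in general,
\[
\ed^2\omega^1 \bmod \omega^3 \qquad\text{and}\qquad \ed^2\omega^0 \bmod \omega^1,\omega^3,
\]
and from the two resulting coefficients of $\omega^1\wedge\omega^2\wedge\omega^4$ and $\omega^0\wedge\omega^2\wedge\omega^4$ extracts
\[
P_{124} = -P_{142} = \tfrac{1}{2}(1+4P_{12}P_{14}).
\]
If $P_{12}\equiv 0$ on an open set then $P_{124}=0$, yet the right-hand side equals $\tfrac{1}{2}$, a contradiction; and symmetrically for $P_{14}$. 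No further reductions, no appeal to $\phi_3,\phi_7$, and no \Maple needed---the whole argument is two lines of wedge-product bookkeeping. The key insight you are missing is to reduce modulo $\omega^3$ (not $\omega^1$) so that the $\omega^2\wedge\omega^4$ coupling, which carries the crucial constant term~$1$, survives.
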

\begin{proof}
	We compute
	\[
		\begin{aligned}
			\ed^2\omega^1\equiv& -(1+4P_{12}P_{14})\,\omega^1\w\omega^2\w\omega^4\\
					&
						+\ed P_{12}\w\omega^1\w\omega^2
						+ \ed P_{14}\w\omega^1\w\omega^4&&\mod\omega^3,\\
			-\frac{1}{3}\ed^2\omega^0\equiv&
						\phantom{+}\ed P_{12}\w\omega^0\w\omega^2 - \ed P_{14}\w\omega^0\w\omega^4&&\mod\omega^1,\omega^3.
		\end{aligned}
	\]
	Letting $P_{ijk}$ denote the coefficient of $\omega^k$ in $\ed P_{ij}$, the above 
	implies that
	\[
		P_{124}= -P_{142} = \frac{1}{2}(1+4P_{12}P_{14}).
	\]
	Thus, either $P_{12}$ or $P_{14}$ being zero (on an open subset of $\Gcal_2$) will give a contradiction.
\end{proof}

By Lemma~\ref{CaseIII_P12P14NonVan} and \eqref{CaseIII_P01P03}, one can perform
a further structure reduction by normalizing 
\[
	P_{01} = P_{03} = 0.
\]
The result of this reduction is an $e$-structure; thus, we can write
\[
	\phi_3 = P_{3j}\omega^j, \qquad \phi_7 = P_{7j}\omega^j.
\]
Further relations among the $P_{ij}$'s are obtained via the following computation:
\[
	\begin{aligned}
		\ed^2\omega^1&\equiv (-P_{14}P_{70} + P_{12}-P_{30})\,\omega^0\w\omega^1\w\omega^3+\ed P_{13}\w\omega^1\w\omega^3 
			&&\mod\omega^2,\omega^4,\\
		-\ed^2\omega^2&\equiv (2P_{14}P_{70}+4P_{12} - P_{30})\,\omega^0\w\omega^2\w\omega^3  + \ed P_{13}\w\omega^2\w\omega^3&&\mod\omega^1,\omega^4,
	\end{aligned}
\]
which implies that
\[
	P_{70} = -\frac{P_{12}}{P_{14}};
\]
similarly, computing
\[
	\ed^2\omega^3\mod\omega^2,\omega^4\qquad
	\mbox{and}
	\qquad
	-\ed^2\omega^4\mod\omega^2,\omega^3
\]
yields
\[
	P_{30} = \frac{P_{14}}{P_{12}}.
\]
Computing
\[
	\ed^2\omega^0\equiv (3P_{12}P_{33}+3P_{14}P_{71}+2)\,\omega^0\w\omega^1\w\omega^3
	\mod\omega^2,\omega^4
\]
shows that there exists a new function $R$ such that
\[
		 P_{33} = \frac{R - 1}{3P_{12}}, \qquad P_{71} = \frac{-R-1}{3P_{14}}.
\]

At this stage, there are $9$ primary invariants
\begin{equation}\label{CaseIII_PrimInv}
	P_{11}, P_{12}, P_{13}, P_{14}, P_{32}, P_{34}, P_{74}, P_{72}, R,
\end{equation}
and we shall rename them, in the exact order as they appear in \eqref{CaseIII_PrimInv}, as
\[
	A_\sigma \quad (\sigma = 1,2,\ldots, 9).
\]
The structure equations $\ed\omega^i$ $(i = 0,1,\ldots, 4)$ 
now take the form of \eqref{Apdx:CaseIII_StrEqn_om} in Appendix~\ref{Apdx:StrEqn},
and we are ready to prove the generality result in this case.

\begin{theorem}\label{CaseIII_GenThm}
	Locally the hyperbolic Monge--Amp\`ere systems satisfying 
	$S_1=0$, $S_2\ne 0$ and $Q_1, Q_2\ne 0$ can be uniquely determined (up to contact transformations) by specifying $2$ arbitrary function of $3$ variables.
\end{theorem}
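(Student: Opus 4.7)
The plan is to proceed in parallel with the proofs of Propositions~\ref{CaseIIaGen} and \ref{CaseIIbGen}, applying Cartan--K\"ahler machinery to the $e$-structure already obtained just before the theorem statement. Since the normalizations $Q_1=Q_2=1$ and $P_{01}=P_{03}=0$ have exhausted the $G_1$-action, the nine functions $A_1,\ldots,A_9$ of \eqref{CaseIII_PrimInv} are scalar invariants, and the forms $\phi_3,\phi_7$ have been expressed as $\phi_3 = P_{3j}\omega^j$, $\phi_7 = P_{7j}\omega^j$ with the non-invariant $P_{ij}$'s already reduced to functions of the $A_\sigma$'s. My target is to show that the Pfaffian system encoding the derivatives of these invariants is involutive with last nonzero Cartan character $s_3 = 2$, and then invoke Cartan's third fundamental theorem (\cite[Theorem 3]{Bryant14}).

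First, I would define $A_{\sigma,j}$ by $\ed A_\sigma = A_{\sigma,j}\omega^j$, giving $9\times 5 = 45$ new functions. Imposing $\ed^2\omega^i = 0$ for $i = 0,1,\ldots,4$ yields a system of polynomial equations in the $A_\sigma$ and $A_{\sigma,j}$, most of which will be solvable for specific $A_{\sigma,j}$'s in terms of the remaining free ones (analogous to the situations in Cases IIa and IIb). I would then consider the Pfaffian system $I$ generated by the nine $1$-forms $\ed A_\sigma - A_{\sigma,j}\omega^j$, compute $\ed$ of each modulo $I$, and demand torsion absorbability. Any failure of absorbability will impose additional polynomial relations among the $A_\sigma$'s and the remaining $A_{\sigma,j}$'s, which I would assign and then recompute the tableau.

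The main obstacle, as in Case IIa, is that this torsion-absorbability step may need to be iterated several times, and the intermediate relations can be computationally unwieldy because there are nine invariants (compared to six in IIa and two in IIb). There is also the possibility, as happened in the $A_4 = 0$ subcase of IIa, that the tableau arising from the first pass is not involutive and a prolongation is required. I would carry out both the direct analysis and, if necessary, the prolongation using the \Cartan package in \Maple, exactly as referenced in the paper's introduction.

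Once an involutive tableau is obtained, the expected Cartan characters are
\begin{equation*}
(s_1, s_2, s_3, s_4, s_5) = (s_1, s_2, 2, 0, 0)
\end{equation*}
for some $s_1\ge s_2\ge 2$; verifying involutivity amounts to checking that the dimension of the first prolongation equals $s_1 + 2s_2 + 3s_3 = s_1 + 2s_2 + 6$. The value $s_3 = 2$ is precisely what encodes the generality \emph{$2$ arbitrary functions of $3$ variables}, and Cartan's third fundamental theorem then yields the uniqueness (up to contact transformations) of the germs, completing the proof. A sanity check is that the generality in Case~III should strictly exceed that in Case~II, which already has $s_3 = 1$; so I expect $s_3 = 2$ rather than a value forced by a hidden constraint that would reduce the system to something less generic.
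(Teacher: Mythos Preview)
Your proposal is correct and follows essentially the same approach as the paper: define $A_{\sigma,j}$, impose $\ed^2\omega^i=0$ (which yields $33$ relations solving for $25$ of the $A_{\sigma,j}$), then iterate torsion absorption on the Pfaffian system generated by the nine $\ed A_\sigma - A_{\sigma,j}\omega^j$ until the tableau stabilizes, and apply Cartan's test and third fundamental theorem. In fact Case~III turns out to be computationally cleaner than you anticipate---only one round of torsion absorption (imposing two further relations) is needed, no prolongation is required, and the resulting characters are $(s_1,s_2,s_3,s_4,s_5)=(9,7,2,0,0)$ with first prolongation of dimension~$29$.
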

\begin{proof}
	For each $A_\sigma$ $(\sigma =1,2,\ldots, 9)$, define $A_{\sigma,j}$ $(j = 0,1,\ldots, 4)$ by
\[
	 \ed A_\sigma = A_{\sigma,j} \omega^j.
\]
Applying $\ed^2=0$ to the equations \eqref{Apdx:CaseIII_StrEqn_om} yields a system of $33$ distinct polynomial equations in the $A_\sigma$ and $A_{\sigma,j}$. The solution of this system 
expresses $25$ of the $A_{\sigma,j}$ in terms of the other variables.

Next we examine the Pfaffian system $I$ generated by the nine $\ed A_\sigma - A_{\sigma,j}\omega^j$
$(\sigma = 1,2,\ldots, 9)$. The tableau of $I$ is $20$-dimensional, and the torsion is 
absorbable exactly when two more relations are satisfied (in our calculation, $A_{6,2}$
and $A_{8,4}$ are solved in terms of other variables). Updating these relations, 
the new tableau is $18$-dimensional, and the new torsion can be completely absorbed.

Now a computation shows that the tableau of $I$ has the Cartan characters
\[
	(s_1,s_2,s_3,s_4,s_5) = (9,7,2,0,0),
\]
and its first prolongation is $29$ dimensional. By Cartan's test, the Pfaffian system is involutive, and, by Cartan's third fundamental theorem, the underlying structure locally depends on $2$ arbitrary functions of $3$ variables.
\end{proof}

The final structure equations in this case are recorded in \eqref{Apdx:CaseIII_StrEqn_om} to \eqref{Apdx:CaseIII_StrEqn_A9} in Appendix~\ref{Apdx:StrEqn}.

\section{Analysis of symmetry}

In this section, we classify those $S_1 = 0$ hyperbolic Monge--Amp\`ere systems that 
have cohomogeneity at most one. Here `cohomogeneity' is interpreted infinitesimally: 
a hyperbolic Monge--Amp\`ere system $(M,\Ecal)$ is of \emph{cohomogeneity $k$} near $p\in M$ if, for sufficiently small neighborhoods $U\ni p$,
the Lie algebra of infinitesimal symmetry $\mathfrak{l}\subset\mathfrak{X}(U)$ of $(U, \Ecal|_U)$ satisfies
$\dim(\mathfrak{l}_q) = 5 - k$ for each $q\in U$, where $\mathfrak{l}_q = \span\{X_q: X\in \mathfrak{l}\}$. A system of cohomogeneity zero is called \emph{homogeneous}.

The main result of this section, an immediate consequence of Propositions~\ref{CaseIICohomog_prop} and \ref{CaseIIICohomog_prop} below,
is the following.
\begin{theorem}\label{CohomogThm}
Any hyperbolic Monge--Amp\`ere system satisfying $S_1 =0$, $S_2\ne 0$ and having cohomogeneity $k\le 1$ must 
satisfy $Q_1 = Q_2 = 0$; thus, up to contact transformations, it corresponds to a linear PDE of the form \eqref{Q1Q2VanCoordForm}.
\end{theorem}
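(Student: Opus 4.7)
The plan is to establish Theorem~\ref{CohomogThm} by ruling out Cases II and III of \eqref{threeCases}, since Case I is exactly the claimed conclusion $Q_1=Q_2=0$. The common principle in both remaining cases is that every infinitesimal symmetry of $(M,\Ecal)$ lifts uniquely to the reduced structure bundle (which, in each of Cases II and III, is either already an $e$-structure or becomes one after one more canonical reduction), and this lift preserves every scalar invariant $A_\sigma$ appearing in the structure equations of Appendix~\ref{Apdx:StrEqn}. The hypothesis $\dim(\mathfrak{l}_p)\ge 4$ forces the generic orbits of $\mathfrak{l}$ in $M$ to have codimension at most one, so any two scalar invariants descending to $M$ must be functionally dependent there; equivalently, $\ed A_\sigma\w\ed A_\tau\equiv 0$ on $M$ for every pair $(\sigma,\tau)$.

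For Case II, I would split into IIb and IIa, treating the cleaner subcase first. In Case IIb, the formula for $\ed A_2$ obtained in Section~5.2 (recorded in \eqref{Apdx:CaseIIbStrEqn_As}) shows that $\ed A_2$ has the nonzero constant $-\tfrac{7}{6\lambda}$ as its coefficient of $\omega^0$ and no $\omega^4$-component. The identity \eqref{CaseIIb_dA23} forces the derived invariant $A_3:=A_{2,3}$ to have the nonzero constant $-\tfrac{7}{6\lambda}$ as its coefficient of $\omega^4$. Hence
\[
  \ed A_2\w\ed A_3 \;=\; \tfrac{49}{36\lambda^2}\,\omega^0\w\omega^4 \,+\, (\text{terms not involving }\omega^0\w\omega^4) \,\neq\, 0,
\]
contradicting functional dependence of $A_2$ and $A_3$ on $M$. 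For Case IIa, the strategy is the same in spirit: I would exhibit a pair of invariants among the $A_\sigma$ of \eqref{Apdx:CaseIIaStrEqn_As} whose differentials have linearly independent parts along two distinct coframe directions. The two involutive subcases $A_4\ne 0$ and $A_4=0$ of Proposition~\ref{CaseIIaGen} would be treated separately, using respectively the relations $A_{11}=\frac{1}{3A_4}(18A_4^2A_6+6A_3A_4+9A_5+5)$ and $A_{12}=\tfrac12(15A_3-7)$ to eliminate redundancies before pinpointing the nonvanishing wedge.

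For Case III, with its nine primary invariants $A_1,\ldots,A_9$ governed by \eqref{Apdx:CaseIII_StrEqn_om}, the same reasoning applies. One identifies at least two invariants whose differentials, read off the structure equations in Appendix~\ref{Apdx:StrEqn}, carry nonzero components along distinct coframe directions that survive in the wedge; this forces $\ed A_\sigma\w\ed A_\tau\neq 0$ and hence contradicts cohomogeneity $\le 1$. Combining the resulting propositions for Cases II and III yields Theorem~\ref{CohomogThm}.

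The main obstacle will be the sheer length of the structure equations in Cases IIa and III, where the `constant' parts of $\ed A_\sigma$ are not as immediately visible as in Case IIb. In each of these cases I would either identify an explicit pair of invariants as above, or, failing a clean manual identification, perform a direct \Cartan{} \Maple{} computation of the ideal generated by all $\ed A_\sigma\w\ed A_\tau$ on $M$ and show it is inconsistent with the structure equations, in line with the computer-assisted approach used elsewhere in the paper.
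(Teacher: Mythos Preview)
Your strategy is correct and is the paper's own: Theorem~\ref{CohomogThm} is stated as an immediate consequence of Propositions~\ref{CaseIICohomog_prop} and~\ref{CaseIIICohomog_prop}, each of which shows that the relevant case has cohomogeneity at least~$2$ by exhibiting two functionally independent scalar invariants. Your Case~IIb argument is essentially the paper's: it too passes to the derived invariant $A_{2,3}$ and uses its nonzero $\omega^4$-component against the nonzero $\omega^0$-component of $\ed A_2$.

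Where you diverge is in the \emph{execution} for Cases~IIa and~III, which you leave as a plan. The paper does not route these through the $e$-structure reductions of Proposition~\ref{CaseIIaGen}; it works directly with the structure equations of Appendix~\ref{Apdx:StrEqn} and systematically determines when all $2\times2$ minors of the coefficient matrix $(C^\rho_i)$ of $(\ed A_\rho)$ vanish. In Case~IIa this forces $A_4=0$ and pins down $(A_1,A_2,A_3,A_5)$ to one of two specific numerical tuples; in each branch a derived invariant ($A_{3,3}$ or $A_{6,3}$, which becomes a genuine function on $M$ precisely because $A_4=0$ kills the $\omega^4$-component of $\ed A_3$ resp.\ $\ed A_6$) then supplies the second independent differential. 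Case~III proceeds by the same minor-vanishing method, with a longer chain of substitutions ultimately producing a contradiction. Your idea of importing the relations $A_{11}=\cdots$ and $A_{12}=\cdots$ from the generality analysis is not wrong --- those identities do hold for any actual system --- but it is an unnecessary detour: the cohomogeneity question is logically independent of the involutivity analysis and is settled more directly from \eqref{Apdx:CaseIIaStrEqn_As} and \eqref{Apdx:CaseIII_StrEqn_A1}--\eqref{Apdx:CaseIII_StrEqn_A9} alone.
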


Now we proceed to examine the cases I, II and III separately.

\begin{prop}\label{CaseIIa_sym_prop}
	The cohomogeneity of a hyperbolic Monge--Amp\`ere system satisfying $S_1 = 0$, $S_2\ne 0$ 
	and $Q_1=Q_2=0$ is equal to the 
	cohomogeneity of its associated 
	geometry $(\Sigma,\Omega; \foli_1,\foli_2)$ (see Section~\ref{Case_I_asociGeom}), which is at most $2$.
\end{prop}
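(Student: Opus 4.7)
The plan is to construct a natural Lie algebra homomorphism $\pi_*: \mathfrak{l}(M,\Ecal) \to \mathfrak{l}(\Sigma,\Omega;\foli_1,\foli_2)$, prove that it is surjective, and identify its pointwise kernel with the full $3$-dimensional vertical tangent space of the canonical submersion $\pi: M \to \Sigma$. Once this is accomplished, $\dim\mathfrak{l}_p(M) = 3 + \dim\mathfrak{l}_{\pi(p)}(\Sigma)$ at every $p$, which is precisely equality of cohomogeneities; the bound of $2$ then follows immediately from $\dim\Sigma=2$.

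First I would realize $\pi$ concretely. The tautological $1$-forms $\omega^1,\omega^3$ on $\Gcal_1$ annihilate the vertical vectors of $\Gcal_1\to M$, so the composition $\Gcal_1\to\Wcal\to\Sigma$ from Section~\ref{Case_I_asociGeom} (obtained by integrating the Frobenius Pfaffian system $\{\omega^1,\omega^3,\phi_1\}$) is constant along the fibers of $\Gcal_1\to M$ and descends to the required submersion $\pi: M \to \Sigma$ with $3$-dimensional fibers. Given $X\in\mathfrak{l}(M,\Ecal)$, its canonical lift $\tilde X$ to $\Gcal_1$ preserves the entire adapted coframing of \eqref{Q1Q2VanStrEqn} and \eqref{Q1Q2VanStrEqnConn}; in particular $\tilde X$ preserves $\{\omega^1,\omega^3,\phi_1\}$ and descends through $\Wcal$ to a vector field $\pi_* X$ on $\Sigma$ that preserves $\Omega = \omega^1\wedge\omega^3$ and the unordered pair $\{\foli_1,\foli_2\}$.

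Surjectivity is where the main obstacle lies. The integration performed in Section~\ref{CaseI_intg_Section} and codified in Theorem~\ref{CaseICoordForm_Thm} shows that the underlying PDE is determined up to contact equivalence by the single function $f(x,y)$, with the auxiliary $\Phi$ encoding only gauge freedom. Consequently, any area- and foliation-preserving diffeomorphism of $\Sigma$ induces a self-equivalence of the $G_1$-structure and, by pulling back the adapted coframing, a contact symmetry of $(M,\Ecal)$ whose $\pi_*$-image is the prescribed symmetry on $\Sigma$. The delicate point is to verify that the gauge shift in $\Phi$ and, when the symmetry swaps $\foli_1\leftrightarrow\foli_2$, the discrete element $\tilde J$, can be handled compatibly so that the result is a genuine vector field on $M$.

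Finally, I would compute the kernel of $\pi_*$ using the linearity of \eqref{Q1Q2VanCoordForm}: the Euler field $z\partial_z$ and, for every solution $w(x,y)$, the prolonged translation $w\partial_z + w_x\partial_p + w_y\partial_q$ all preserve $\Ecal$. Since we work in the real-analytic category, the Cauchy--Kowalevski theorem produces local solutions with any prescribed $1$-jet at $(x_0,y_0)$, so the evaluations of these vector fields surject at each $p$ onto the full $3$-dimensional vertical space $T_p\pi^{-1}(\pi(p))$. Combined with surjectivity of $\pi_*$, this yields $\dim\mathfrak{l}_p(M) = 3 + \dim\mathfrak{l}_{\pi(p)}(\Sigma)$ and completes the proof.
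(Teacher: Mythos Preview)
Your overall strategy---push forward to $\Sigma$, show surjectivity of $\pi_*$, and identify the vertical kernel---is sound and structurally different from the paper's route. In particular, your Step~3 is a pleasant alternative not present in the paper: exploiting the explicit linear form \eqref{Q1Q2VanCoordForm} to produce the scaling symmetry (which, to be a contact vector field, must be the full prolongation $z\partial_z+p\partial_p+q\partial_q$, not just $z\partial_z$) together with prolonged translations by solutions $w$, and invoking Cauchy--Kowalevski to fill out the $3$-dimensional vertical space at every point. The paper never uses linearity; it establishes the inequality $\mathrm{cohomog}(\Sigma)\le\mathrm{cohomog}(M)$ by the elementary rank estimate $\dim\pi_*(\mathfrak l_p)\ge\dim\mathfrak l_p-3$.

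The genuine gap is your Step~2. You correctly flag the ``delicate point'' that the gauge freedom in $\Phi$ (and the discrete swap $\tilde J$) must be shown to be compatible with lifting a given symmetry of $\Sigma$, but you do not actually carry this out. Tracing the coordinate normalizations of Section~\ref{CaseI_intg_Section} through an area- and net-preserving flow is feasible but messy: $f$ itself is not invariant (it shifts by $-\tfrac12(a_x+b_y)$ under $Y=a(x)\partial_x+b(y)\partial_y$), so one must redo the choices of $\Phi$, $S$, $T$, $P$, $Q$ coherently along the flow and check that the resulting family of contact transformations is generated by a single vector field on $M$. The paper sidesteps all of this by working on the product $\Gcal_{1,U}\times\Gcal_{1,V}$: once $\psi$ is lifted to $\tilde\psi:\Wcal_U\to\Wcal_V$, the Pfaffian system generated by $\omega^k_U-\omega^k_V$ $(k=0,2,4)$ and $\phi_{0,U}-\phi_{0,V}$ on the graph locus is shown, directly from \eqref{Q1Q2VanStrEqn}, to be involutive (Cartan's test), producing the lift $\Psi$ without any coordinate bookkeeping. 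This EDS argument is both shorter and more robust than the coordinate approach you outline; if you want to keep your Step~3, you could simply borrow the paper's involutivity argument for Step~2.
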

\begin{proof}
	Let $\pi:(M,\Ecal)\to(\Sigma,\Omega;\foli_1,\foli_2)$ be the canonical local submersion, and
	let $X\in \mathfrak{X}(U)$ ($U\subset M$ small open subset) be an infinitesimal symmetry
	of $(M,\Ecal)$. By \eqref{Q1Q2VanStrEqn}, it is easy to see that the flow $F^t$ of $X$ 
	preserves the fibers of $\pi$ and induces an infinitesimal symmetry $\pi_*X$ of $(\Sigma,\Omega;\foli_1,\foli_2)$ on $\pi(U)$. Now, if $\dim(\mathfrak{l}_q) = 5-k$ for $q\in U$, then 
	it is clear that $\span\{\pi_*X: X\in \mathfrak{l}\}$ has dimension at least $2 - k$
	for points in $\pi(U)$. This proves that the cohomogeneity of $(\Sigma,\Omega;\foli_1,\foli_2)$
	is at most that of $(M,\Ecal)$.
	
	Thus, to complete the proof, it suffices to show that, by shrinking the domain if needed, any local symmetry of 
	$(\Sigma,\Omega;\foli_1,\foli_2)$ `lifts' to a local symmetry of $(M,\Ecal)$. To this end, it is 
	convenient to work with the structure bundles. 
	
	Suppose that $\psi:U\to V$ is a local symmetry of $(\Sigma,\Omega;\foli_1,\foli_2)$, where $U,V\subset\Sigma$ are open subsets, and let $\pi_1: \Gcal_{1}\to \Wcal$ and $\pi_2:\Wcal\to \Sigma$ be the canonical local submersions. Let $\Wcal_U = \pi_2^{-1}(U)$, $\Gcal_{1,U} = \pi_1^{-1}(\Wcal_U)$, and similarly define $\Wcal_{V}$ and $\Gcal_{1,V}$. Let $\omega_U^k$
	$(k = 0,1,\ldots, 4)$ be the tautological $1$-forms on $\Gcal_{1,U}$, which satisfy \eqref{Q1Q2VanStrEqn} and \eqref{Q1Q2VanStrEqnConn}, and similarly for $\omega^k_V$. 
	Note that $\omega^1_U, \omega^3_U$ and $\phi_{1,U}$ are pull-backs of $1$-forms
	defined on $\Wcal_U$, which we denote by the same symbol, and similarly for $\omega^1_V,\omega^3_V$ and $\phi_{1,V}$.
		\[
		\begin{tikzcd}[column sep=normal]
				\Gcal_{1,U} \arrow[r,dashed,"\Psi"] \arrow[d, "\pi_{1}",swap]  &   \Gcal_{1,V}\arrow[d,"\pi_{1}"]& 		\\
				\Wcal_U  \arrow[r,"\tilde \psi"]\arrow[d,"\pi_{2}",swap]&  \Wcal_V\arrow[d,"\pi_{2}"]\\
				U\arrow[r, "\psi"]& V
 			\end{tikzcd}
	\]
	
	Since $\psi:U\to V$ is a local symmetry, by shrinking $U$ and $V$, if needed, 
	there exists a 
	diffeomorphism $\tilde \psi: \Wcal_U\to \Wcal_V$ such that $\pi_2\circ\tilde\psi = \psi\circ\pi_2$ and
	$\tilde \psi^*(\omega^1_V,\omega^3_V) = (\omega^1_U,\omega^3_U)$ (see \cite[p.11]{Gardner89}). By \eqref{Q1Q2VanStrEqn}, we have $\tilde\psi^*(\phi_{1,V}) = \phi_{1,U}$. Now let
	$\Lcal \subset \Gcal_{1,U}\times \Gcal_{1,V}$ be the locus defined by 
	$\tilde \psi(\pi_1(u))) = \pi_1(v)$. On $\Lcal$, the restriction of $\omega^\ell_U - \omega^\ell_V$ $(\ell = 1,3)$ and $\phi_{1,U} -\phi_{1,V}$ are identically zero. 
	Let $I$ be the rank-$4$ Pfaffian system, defined on $\Lcal$,
	generated by  
	$\theta^k := \omega^k_U - \omega^k_V$ for $k = 0,2,4$,
	and
	$\eta = \phi_{0,U} - \phi_{0,V}$. 
	
	Using \eqref{Q1Q2VanStrEqn}, we obtain
	\[
		\left.
		\begin{aligned}
			\ed\theta^0&\equiv 0\\
			\ed\theta^2&\equiv - (\phi_{3,U} - \phi_{3,V})\w\omega^1
							\\
			\ed\theta^4& \equiv -(\phi_{7,U} - \phi_{7,V})\w\omega^3
							\\
			\ed\eta & \equiv 0
		\end{aligned}
		~
		\right\}\mod I.
	\]
	By Cartan's test, $I$ is involutive; thus, there exists a local diffeomorphism 
	$\Psi: \Gcal_{1,U} \to \Gcal_{1,V}$, whose graph is contained in $\Lcal$, that satisfies $\Psi^*\omega^k_V = \omega^k_U$
	for $k = 0,1,\ldots, 4$.  It follows (see \cite[p.11]{Gardner89}) that $\Psi$ descends to a local symmetry of 
	$(M,\Ecal)$ that lifts $\psi$. 
\end{proof}

Given a hyperbolic Monge--Amp\`ere
PDE of the form \eqref{Q1Q2VanCoordForm}, the cohomogeneity of the corresponding 
Monge--Amp\`ere system
can be computed explicitly by using the function $f(x,y)$, as the following proposition shows.

\begin{prop}\label{CaseIsym_via_f}
	Given a PDE of the form \eqref{Q1Q2VanCoordForm}, its corresponding Monge--Amp\`ere system $(M,\Ecal)$
	is homogeneous if and only if $A(x,y)=-2\e^{-2f}f_{xy}$ is constant. For $p\in M$ satisfying
	$\ed A|_p\ne 0$, $(M,\Ecal)$ is of cohomogeneity $1$ near $p$
	if and only if, in a neighborhood of $p$, $A_x$ and $A_y$ are not identically zero, and the following exterior derivatives are 
	scalar multiples of 
	$\ed A$
	\[
		\ed\left(\e^{-2f} A_xA_y\right),\quad
		\ed\left(\e^{-2f}A_{xy}\right).
	\]
\end{prop}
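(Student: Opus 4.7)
The plan is to invoke Proposition~\ref{CaseIIa_sym_prop} to reduce the cohomogeneity of $(M,\Ecal)$ to the cohomogeneity of its associated geometry $(\Sigma,\Omega;\foli_1,\foli_2)$, which is equivalent to the Lorentzian surface $(\Sigma,\mbox{\fontshape{ui}\selectfont g})$ with $\mbox{\fontshape{ui}\selectfont g}=\omega^1\circ\omega^3=\e^{2f}\,\ed x\,\ed y$. A direct conformal-rescaling computation identifies the Gauss curvature of $\mbox{\fontshape{ui}\selectfont g}$ with a constant multiple of $A=-2\e^{-2f}f_{xy}$, so the proposition becomes a statement about the local cohomogeneity of an analytic Lorentzian $2$-surface in terms of the invariant $A$. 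For the homogeneous half, the structure equations \eqref{Q1Q2VanStrEqn}, \eqref{Q1Q2VanStrEqnConn} on $\Wcal$ have $A$ as their sole primary invariant: if $A$ is constant they become constant-coefficient Maurer--Cartan equations and Cartan's theorem provides a $3$-dimensional simply transitive symmetry group on $\Wcal$, whose induced action on $\Sigma$ is transitive; conversely, any isometry preserves $A$.

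For cohomogeneity one, assuming $\ed A|_p\ne 0$, I would work on $\Wcal$ in the coordinates $(x,y,\lambda)$ of Section~\ref{CaseI_intg_Section}. Writing $\ed A=A_1\omega^1+A_3\omega^3$ (with no $\phi_1$-term, since $A$ pulls back from $\Sigma$), one computes $A_1=\lambda^{-1}\e^{-f}A_x$ and $A_3=\lambda\e^{-f}A_y$, while $\ed^2 A=0$ yields the identities $(A_1)_0=A_1$, $(A_3)_0=-A_3$, and $(A_1)_3=(A_3)_1$. Thus the two weight-zero (i.e.\ $\Sigma$-descending) scalar invariants at second order are
\[
	A_1A_3=\e^{-2f}A_xA_y\ \propto\ |\nabla A|^2_{\mbox{\fontshape{ui}\selectfont g}},\qquad (A_1)_3=\e^{-2f}A_{xy}\ \propto\ \Delta_{\mbox{\fontshape{ui}\selectfont g}}A,
\]
and the two stated exterior-derivative conditions are precisely the requirements that $|\nabla A|^2$ and $\Delta A$ be locally functions of $A$. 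Necessity is immediate, since any infinitesimal isometry preserves all scalar invariants. For sufficiency, set $A_1A_3=F(A)$ and $(A_1)_3=G(A)$; matching coefficients after differentiating $A_1A_3=F(A)$ yields
\[
	(A_1)_1=\tfrac{A_1}{A_3}(F'(A)-G(A)),\qquad (A_3)_3=\tfrac{A_3}{A_1}(F'(A)-G(A)),
\]
valid where $A_1A_3\ne 0$. Hence every order-two weight-zero product (for instance $(A_1)_1(A_3)_3=(F'-G)^2$) is a function of $A$, and an induction on the order of coframe differentiation (iterating $\ed^2=0$) shows that every weight-zero coframe invariant on $\Wcal$ is a function of $A$. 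Since the cohomogeneity of $(\Sigma,\mbox{\fontshape{ui}\selectfont g})$ equals the generic rank of its ring of scalar invariants, which is now one (generated by $A$), we conclude cohomogeneity one.

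The main obstacle is the null-gradient case $A_y\equiv 0$ (or, symmetrically, $A_x\equiv 0$), in which the formulas for $(A_1)_1$ and $(A_3)_3$ break down because $A_1$ or $A_3$ vanishes identically. To rule this case out from cohomogeneity one, I would argue directly that any Killing field $X$ on $(\Sigma,\mbox{\fontshape{ui}\selectfont g})$ must preserve both the level sets of $A$ (here the curves $x=\mathrm{const}$) and the null foliations $\foli_1,\foli_2$, forcing $X=\eta(y)\partial_y$. The equation $L_X\mbox{\fontshape{ui}\selectfont g}=0$ then reduces to $\eta'(y)=-2\eta(y)f_y(x,y)$; for nonzero $\eta$ this forces $f_y$ to be independent of $x$, so $f=k(x)+H(y)$ is separable and $A\equiv 0$, contradicting $\ed A|_p\ne 0$. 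Thus, in the cohomogeneity-one case, both $A_x$ and $A_y$ must be not identically zero in any neighborhood of $p$, completing the characterization.
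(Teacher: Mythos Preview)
Your argument is correct and follows essentially the same route as the paper: reduce via Proposition~\ref{CaseIIa_sym_prop} to the associated surface, use that $A$ is the sole primary invariant on $\Wcal$, and show that the two stated second-order scalars being functions of $A$ forces all scalar invariants to be. The only difference is bookkeeping---the paper normalizes $A_3=1$ to obtain an $e$-structure on $\Sigma$ with invariants $A_1=\e^{-2f}A_xA_y$, $H_1=\e^{-2f}A_{xy}$, $H_3=(A_{yy}-2f_yA_y)/A_y^2$ rather than tracking weight-zero combinations on $\Wcal$; your formula $(A_3)_3=(A_3/A_1)(F'-G)$ is precisely the paper's deduction that $H_3$ is a function of $A$, and your Killing-field exclusion of the null-gradient case reaches the same conclusion $f_{xy}=0$ as the paper's argument via the explicit form of $H_3$.
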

\begin{proof} By Proposition~\ref{CaseIIa_sym_prop}, it suffices 
to work with the associated geometry $(\Sigma,\Omega;\foli_1,\foli_2)$,
which we denote by $\Sigma$ for simplicity; let $\Wcal$ be its structure bundle.

First suppose that $\Sigma$ is homogeneous; clearly $A$ must be locally constant. Conversely, 
if $A$ is constant, then, locally $\Wcal$ has a Lie group structure with 
$\omega^1,\omega^3,\phi_0$ being left-invariant $1$-forms; thus, $\Sigma$ is homogeneous.
This proves the first statement.

Now we prove the second statement. By
Section~\ref{CaseI_intg_Section},  
there exist local coordinates $(x,y,t)$ on $\Wcal$ and a function $f(x,y)$
such that
	\[
		\omega^1 = \e^{f+t} \ed x , \quad
		\omega^3 = \e^{f - t} \ed y,\quad
		\phi_1 = -\ed t + f_x \ed x - f_y\ed y,
	\]
	and $A = -2\e^{-2f} f_{xy}$.
(The function $f$ is precisely the one occurring in \eqref{Q1Q2VanCoordForm}.)
	Writing $\ed A = A_1\omega^1+A_3\omega^3$, we have
	\begin{equation}\label{CaseI_sym_A1A3}
		A_1 = A_x \e^{-f - t}, \quad A_3 = A_y \e^{-f + t}.
	\end{equation}
	Suppose that $\ed A|_{(x,y)}\ne 0$ at $(x,y)\in \Sigma$.
	By applying the transformation $(\omega^1,\omega^3,\phi_1)\mapsto(\omega^3,-\omega^1,-\phi_1)$, if needed, we can assume that $A_3 > 0$; then, by setting $A_3 = 1$, which corresponds to choosing $t = \ln\left(\e^f/A_y\right)$,
	we obtain a coframing ($e$-structure)
	on $\Sigma$.
 	Now $A_1 = \e^{-2f} A_xA_y$ is an invariant; moreover, 
	$\phi_1 = H_1\omega^1+H_3\omega^3$, where
	\[
		H_1 = \e^{-2f} A_{xy}, \quad H_3 = \frac{1}{A_y^2} (A_{yy} - 2f_yA_y)
	\]
	are also invariants.
	
	If $\Sigma$ has cohomogeneity $1$, then it is clear that
	$\ed A_1$, $\ed H_1$, $\ed H_3$ must be multiples of $\ed A$.
	It remains to show that $A_x$ cannot vanish identically. Otherwise, $A_x\equiv 0$, and
	$A$ would be a function of $y$ alone; thus, the same must hold for $H_3$. From the expression of 
	$H_3$, we see that $f_y$ must be a function of $y$ alone, so $f_{xy}=0$, and 
	$A\equiv 0$. This implies that $\Sigma$ is homogeneous, a contradiction.
	
	Conversely, if $\ed A_1$ and $\ed H_1$ are multiples of $\ed A$, then
	in a neighborhood of $(x,y)$, there exist functions $\psi_1,\psi_2$ such that
	\[
		A_1 = \psi_1(A), \qquad H_1 = \psi_2(A).
	\]
	Thus, $\ed A_1 = \psi_1'(A) (A_1\omega^1+\omega^3)$; on the other hand,
	computing $\ed(\ed A) = \ed (A_1\omega^1+\omega^3)$ shows that the coefficient of $\omega^3$ in $\ed A_1$ is $A_1H_3 + H_1$. Since $A_x$ is not identically zero, the same holds for $A_1$;
	from this and $\psi_1'(A) = A_1H_3+H_1$ we see that $H_3$ is a function of $A$. Since all the invariants
	and their $\omega$-derivatives are functions of $A$, by a theorem of Cartan (in \cite{Cartan37}), 
	the algebra infinitesimal symmetry $\mathfrak{l}$ of $\Sigma$
	satisfies $\dim(\mathfrak{l}_{(x',y')}) = 1$ for all $(x',y')$ in a neighborhood of $(x,y)$.
	This completes the proof.
\end{proof}

\begin{example}
A class of cohomogeneity-$1$ examples is obtained by putting $f(x,y) = \varphi(xy)$, 
where $\varphi(u)$ is an arbitrary function of $1$ variable such that
$\e^{-2\varphi}(\varphi' + u\varphi'')$ is non-constant. 
It will be interesting to know whether these are the only cohomogeneity-$1$ examples.
\end{example}

\begin{prop}\label{CaseIICohomog_prop}
	A hyperbolic Monge--Amp\`ere system with $S_1 = 0$, $Q_1\ne 0$ and 
	$Q_2 = 0$ must have cohomogeneity at least $2$.
\end{prop}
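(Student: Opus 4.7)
The plan is to reduce the problem to a property of the $e$-structure obtained on (a cover of) $M$ after carrying out all the torsion normalizations of Sections~5.1 and~5.2. Once the residual structure group is fully trivialized, one has a canonical coframing on $M$. By Cartan's theorem on symmetries of $e$-structures, the cohomogeneity of $(M,\Ecal)$ at a generic point equals the number of functionally independent scalar invariants of this coframing; so it suffices to exhibit two such invariants.

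In Case IIb, the primary invariants are $A_1 = P_{11}$ and $A_2 = P_{13}$ (together with the discrete invariant $\lambda\in\{\pm1\}$). From the formulas for $\ed A_1$ and $\ed A_2$ derived in Section~5.2, the $\omega^0$-coefficient of $\ed A_2$ is the nonzero constant $-7/(6\lambda)$, so $A_2$ is non-constant on $M$. Suppose, for contradiction, that the cohomogeneity is $\le 1$; then $A_1 = f(A_2)$ for some function $f$. Matching $\omega^0$-coefficients in the identity $\ed A_1 = f'(A_2)\,\ed A_2$ forces $f'(A_2)\equiv 0$, so $A_1$ is a constant, and the remaining coefficients of $\ed A_1 = 0$ then pin down $A_1 = 0$, $A_{1,1} = 0$, and $A_{2,1} = -\tfrac{1}{2}$. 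Substituting these specializations into $\ed^2 A_2 = 0$ yields constraints on the remaining free derivative $A_{2,3}$; combined with the assumed relation $A_{2,3} = g(A_2)$, one more round of prolongation produces an over-determined system that is easily seen to be inconsistent.

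In Case IIa, a parallel strategy applies to the six primary invariants $A_1,\ldots,A_6$ of \eqref{CaseIIa_6Pij}, together with the derived invariants introduced in the proof of Proposition~\ref{CaseIIaGen}. In the sub-subcase $A_4\ne 0$, several of those derived invariants are manifestly non-constant and functionally independent from the structure equations \eqref{Apdx:CaseIIaStrEqn_om}. The delicate sub-subcase is $A_4 = 0$, where the forced relations $A_2 \equiv 0$ and $A_5 \equiv -\tfrac{1}{3}$ eliminate many first-order candidates; here one descends to the derived invariant $A_7 := A_{6,3}$, and \eqref{CaseIIa_A63} shows that the $\omega^4$-coefficient of $\ed A_7$ is $-2/3\ne 0$, producing a second functional independence. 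The main obstacle is precisely the bookkeeping in this $A_4 = 0$ sub-subcase: one must carefully trace how the constancy of certain low-order invariants propagates through the $\ed^2 = 0$ identities to the derived invariants; once this is verified, the contradiction argument from Case IIb carries over verbatim.
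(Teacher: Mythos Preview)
Your overall strategy---show that there are at least two functionally independent scalar invariants, hence cohomogeneity $\ge 2$---is exactly the paper's, and your Case IIb argument is essentially correct. In fact the paper's version of IIb is the same computation streamlined: since $\ed A_2$ has no $\omega^4$-term (so $A_{2,3}$ is a well-defined invariant on $M$), one computes $\ed A_{2,3}\equiv -\tfrac{7}{6\lambda}\omega^4\bmod\omega^0,\ldots,\omega^3$, and this together with the nonzero $\omega^0$-coefficient of $\ed A_2$ immediately gives $\rank\{\ed A_2,\ed A_{2,3}\}=2$. Your contradiction argument arrives at the same place but leaves the last step (``one more round of prolongation'') implicit.

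The genuine gap is in Case IIa. You borrow the dichotomy $A_4\ne 0$ versus $A_4=0$ from the generality analysis (Proposition~\ref{CaseIIaGen}), but that split is not the relevant one for the symmetry question, and you do not actually execute either branch. The paper instead sets $C^\rho_i$ equal to the $\omega^i$-coefficient of $\ed A_\rho$ in \eqref{Apdx:CaseIIaStrEqn_As} and solves the system ``all $2\times 2$ minors of $(C^\rho_i)$ vanish''; this computation yields exactly \emph{two} specific configurations, \emph{both} with $A_4=0$, namely
\[
(A_1,A_2,A_4,A_5)=\left(-\tfrac{2}{3},0,0,-\tfrac{2}{3}\right)\quad\text{and}\quad (A_1,A_2,A_3,A_4,A_5)=\left(-\tfrac{1}{3},0,-2,0,-\tfrac{1}{3}\right).
\]
Each requires a \emph{different} derived invariant to finish: in the first it is $A_{3,3}$ (with $\ed A_{3,3}\equiv\omega^0$), in the second it is $A_{6,3}$ (with $\ed A_{6,3}\equiv -\tfrac{2}{3}\omega^4$). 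Your proposal mentions only $A_7:=A_{6,3}$, so it handles at most the second configuration; and your assertion for $A_4\ne 0$ that ``several derived invariants are manifestly non-constant and functionally independent'' is not justified---it is in fact a consequence of the minor computation, not something visible without it. To complete the argument you must either carry out that minor computation in full, or give a separate direct argument covering the first configuration.
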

\begin{proof}
	 We begin with {Case IIa}. In \eqref{Apdx:CaseIIaStrEqn_om}, let $C^\rho_i$ ($\rho = 1,2,\ldots, 6$; $i = 0,1,\ldots, 4$) denote coefficient of $\omega^i$ in $\ed A_\rho$. If the underlying
	 structure has cohomogeneity at most one, then all the $2\times 2$ minors of 
	 $(C^\rho_i)$ must vanish. A computation shows that this condition yields two possibilities:
	 \begin{enumerate}[\bf i.]
	 	\item $(A_1, A_2, A_4, A_5) = \left(-\frac{2}{3}, 0, 0, -\frac{2}{3}\right)$
			and the only nonzero column of $(C^\rho_i)$ is 
			\[
				(C^\rho_4) = \left(-\frac{2}{3} - A_3,\; 0, \;A_{3,3}, \;\frac{1}{3},\; -\frac{7}{3}, \;A_{6,3}\right)^T.
			\]
			Note that $A_4 = 0$ implies that $A_{3,3}$ is an invariant;
			thus, computing $\ed^2A_3$ yields
			\[
				\ed A_{3,3} \equiv \omega^0\mod\omega^1,\ldots\omega^4.
			\]
			Clearly, the rank of $\span\{\ed A_1,\ldots, \ed A_6, \ed A_{3,3}\}$ 
			is at least two.
		\item $(A_1, A_2, A_3, A_4, A_5) = \left(-\frac{1}{3}, 0, -2, 0, -\frac{1}{3}\right)$
			and the only nonzero row of $(C^\rho_i)$ is 
			\[
				(C^6_i) = \left(-\frac{2}{3}, \; A_{6,1}, \; 3, \; A_{6,3}, \; 0\right).
			\]
			The vanishing of $A_4$ implies that $A_{6,3}$ is an invariant; thus,
			computing $\ed^2A_6$ yields
			\[
				\ed A_{6,3}\equiv -\frac{2}{3}\omega^4 \mod\omega^0,\ldots,\omega^3.
			\]
			Clearly, the rank of $\span\{\ed A_1,\ldots, \ed A_6, \ed A_{6,3}\}$ is at least two.
	 \end{enumerate}
	 This proves the proposition for systems in Case IIa.
	 
	 Now we turn to Case IIb. Since $\ed A_2$ contains no $\omega^4$ term,
	 $A_{2,3}$ is an invariant. Computing $\ed^2 A_2$ using \eqref{Apdx:CaseIIbStrEqn_om} and \eqref{Apdx:CaseIIbStrEqn_As}, we find 
	 \[
	 	\ed A_{2,3}\equiv -\frac{7}{6\lambda}\omega^4 \mod \omega^0,\ldots,\omega^3.
	 \]
	 Comparing this relation with \eqref{Apdx:CaseIIbStrEqn_As} shows that 
	 $\span\{\ed A_1, \ed A_2, \ed A_{2,3}\}$ has rank at least $2$.
	This verifies the proposition for systems in Case IIb.
\end{proof}

\begin{prop}\label{CaseIIICohomog_prop}
	A hyperbolic Monge--Amp\`ere system with $S_1 = 0$, $Q_1, Q_2\ne 0$
   	 must have cohomogeneity at least $2$.
\end{prop}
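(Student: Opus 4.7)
The strategy is to mirror the proof of Proposition~\ref{CaseIICohomog_prop}. The reductions leading to Theorem~\ref{CaseIII_GenThm} already produce an $e$-structure with nine primary invariants $A_1,\ldots,A_9$ and explicit structure equations \eqref{Apdx:CaseIII_StrEqn_om}--\eqref{Apdx:CaseIII_StrEqn_A9}. Writing $\ed A_\sigma = C^\sigma_i\,\omega^i$, where the $C^\sigma_i$ are polynomials in the $A_\sigma$'s and in the free second-order data $A_{\sigma,j}$ left after $\ed^2=0$ is imposed and torsion is absorbed, cohomogeneity at most $1$ near $p$ is equivalent to the vanishing, on a neighborhood of $p$, of every $2\times 2$ minor of the $9\times 5$ matrix $(C^\sigma_i)$.

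The first step is to feed this polynomial system into \Maple, together with the two explicit relations that determine $A_{6,2}$ and $A_{8,4}$ obtained inside the proof of Theorem~\ref{CaseIII_GenThm}. I expect the outcome, by analogy with the Case IIa analysis, to be a short list of discrete branches in which at most one row of $(C^\sigma_i)$ is nonzero and most of the $A_\sigma$'s are forced to specific constant values. In each such branch, the vanishing of a whole column of $(C^\sigma_i)$ typically upgrades one of the $A_{\sigma,j}$'s to an honest (absolute) scalar invariant of the $e$-structure.

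The second step is then, for each surviving branch, to pick such a newly-exposed invariant and compute its exterior derivative from $\ed^2 A_\sigma = 0$ using the structure equations. The goal is to show that this new differential is never a scalar multiple of the surviving $\ed A_\sigma$, so that $\span\{\ed A_1,\ldots,\ed A_9, \ed A_{\sigma,j}\}$ has rank at least $2$, contradicting cohomogeneity at most $1$.

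The main obstacle is the sheer size of the polynomial system: the $9\times 5$ coefficient matrix has $\binom{9}{2}\binom{5}{2}=360$ $2\times 2$ minors, and the $A_{\sigma,j}$ appearing in them are themselves rational expressions in the nine primary invariants, so the computation is impractical by hand and must be done in \Maple. A useful shortcut to keep the case analysis manageable is to first isolate the rows of $(C^\sigma_i)$ whose entries contain fixed nonzero constants coming from the non-absorbable torsion (for example, the constants appearing in $\ed A_9$ through the $\phi_3,\phi_7$ reduction that forced $P_{33} = (R-1)/(3P_{12})$ and $P_{71}=-(R+1)/(3P_{14})$); these fixed constants pin down particular entries and collapse the solution set before the full minor system has to be considered.
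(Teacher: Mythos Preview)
Your strategy is the paper's strategy: exploit the Case~III $e$-structure, form the coefficient matrix $(C^\sigma_i)$ of $\ed A_\sigma$, and rule out $\rank(C^\sigma_i)\le 1$, falling back on a second-order invariant when the first-order rank condition alone is not immediately contradictory.

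What you have written, though, is a plan rather than a proof: the outcome of the \Maple\ run is only conjectured, and the ``short list of discrete branches'' is asserted, not exhibited. The paper carries the argument through by hand, and the key device is not to grind through all $360$ minors but to pick a handful that factor cleanly. Specifically, $M^{23}_{04}$ and $M^{14}_{02}$ both contain the factor $4A_2A_4+1$, so their vanishing forces either $(A_2,A_4)=(-\tfrac12,\tfrac12)$ or $4A_2A_4+1=0$. Each branch is then whittled down by two or three further hand-picked minors until either a direct algebraic contradiction appears or (as in your second step) a higher invariant such as $A_{9,1}$ acquires a forced nonzero $\omega^0$-component, so that $\span\{\ed A_\sigma,\ed A_{9,1}\}$ has rank~$2$. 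In the second branch the contradiction comes instead from $\ed^2 A_1$ having a nonzero $\omega^1\wedge\omega^3$ term. So your expectations are accurate, but to turn the proposal into a proof you need either the explicit \Maple\ output or, better, the paper's observation that the minors involving rows $2,3,4$ and columns $0,2,4$ factor through $4A_2A_4+1$, which collapses the case analysis before it explodes.
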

\begin{proof}
	Since we have an $e$-structure in this case, we will assume that the exterior derivatives
	of all invariants are scalar
	multiples of each other, which is necessitated by the condition of having `cohomogeneity at most one', and then we derive a contradiction.
	
	In \eqref{Apdx:CaseIII_StrEqn_A1}-\eqref{Apdx:CaseIII_StrEqn_A9}, let $C^\sigma_i$ denote the coefficient of 
	$\omega^i$ in $\ed A_\sigma$, and let $M^{\sigma_1\sigma_2}_{i_1i_2}$
	denote the determinant of the sub-matrix $(C^\sigma_i)$ formed by $\sigma \in \{\sigma_1,\sigma_2\}$ and $i \in\{i_1, i_2\}$. We compute
	\[
	\begin{aligned}
		M^{23}_{04} &= \frac{1}{2A_2}(4A_2A_4+1)(2A_2^2 - A_4), \\
		M^{14}_{02}& = \frac{-1}{2A_4} (4A_2A_4+1)(2A_4^2+A_2).
	\end{aligned}
	\]
	Thus, in order for $\rank(C^\sigma_i)\le 1$, one of the following two possibilities must hold:
	\begin{enumerate}[\bf i.]
		\item $A_2 = - A_4 = -\frac{1}{2}$. In this case, the vanishing of $\ed A_2, \ed A_4$
			implies that $-A_5 = A_6 = A_1$ and $A_7 = -A_8 = A_3$. Now computing 
			\[
				M^{19}_{04} = -\frac{3}{2}(A_1+A_3)(A_1 - 2A_3),
				\quad
				M^{39}_{04} = \frac{3}{2}(A_1+A_3)(-A_3+2A_1)
			\]
			shows that $A_3 = -A_1$. Meanwhile, the row $(C^9_i)$ reads
			\[
				\left(0,\; A_{9,1}, \; -A_9 - \frac{5}{2}-3A_1^2-\frac{3}{2}A_{1,1}, 
				\; A_{9,3}, \; -A_9 + \frac{5}{2} - 3A_1^2 - \frac{3}{2}A_{1,3}\right).
			\]
			By computing $\ed^2 A_9$, we find
			\[
				\ed A_{9,1}\equiv \frac{1}{2}\left(10+3(A_{1,1} - A_{1,3})\right)\omega^0
						\mod\omega^1,\ldots,\omega^4.
			\]
			It is easy to see that if $A_1=0$, then $\rank\{\ed A_9, \ed A_{9,1}\} = 2$.
			Now assuming $A_1\ne 0$, the condition $M^{19}_{24}=0$ implies that
		 	$A_{1,3} = (10+3A_{1,1})/3$; then from 
			the vanishing of $M^{19}_{12}$ and $M^{19}_{23}$
			one can solve for $A_{9,1}$ and $A_{9,3}$. At this stage,
			computing $\ed^2 A_1$ and reducing modulo $\{\omega^1,\omega^2\}$
			and $\{\omega^2,\omega^3\}$, respectively, we find that the
			coefficient of $\omega^4$ in $\ed A_{1,1}$ must equal
			$-(20/3)-2A_{1,1}$ and $-(10/3) - 2A_{1,1}$ simultaneously, which is impossible.
			
		\item $4A_2A_4+1 = 0$ but $A_2\ne -\frac{1}{2}$.
			In this case, the vanishing of $\ed(A_2A_4)$ yields
			$A_5 = -4A_2^2A_6$ and $A_7 = -A_8/(4A_2^2)$. Now
			\[
				M^{12}_{01} = \frac{1}{2}(1+8A_2^3)(A_1 - 4A_2^2A_6),
				\quad
				M^{34}_{03} = \frac{1}{64A_2^5}(1+8A_2^3)(A_8+4A_3A_2^2),
			\]
			the vanishing of which implies that
			$A_6 = A_1/(4A_2^2)$ and $A_8 = -4A_3A_2^2$. Using these, we compute
			\[
				M^{13}_{02} = \frac{1}{4A_2}(1+8A_2^3)(2A_2A_3 - A_1),
			\]
			the vanishing of which enforces $A_3 = A_1/(2A_2)$.
			From $\rank\{\ed A_1, \ed A_9\}\le 1$, 
			we can express $A_{1,1}, A_{1,3}, A_{9,1}, A_{9,3}$ in terms of 
			$A_1, A_2$ and $A_9$. Then we compute
			$\ed^2A_1$ and find
			that the coefficient of $\omega^1\w\omega^3$ is $4A_1/3$, 
			which implies $A_1 =0$. However, $\ed A_1 = 0$ implies 
			that $A_2 = -1/2$, violating the assumption of this case.
	\end{enumerate}
	To conclude, the hyperbolic Monge--Amp\`ere systems in Case III 
	must have cohomogeneity at least $2$.
\end{proof}

\section{Acknowledgement}
The author would like to thank Professor Robert L. Bryant for helpful discussions on the equivalence method.
Thanks also to NSFC for its support via grant 12401315, and to Shanghai Jiao Tong University for its support via grant WH220407115.

\begin{appendix}

\section{Proof of Proposition~\ref{S2dgnrProp}}\label{Apdx:S2VanPf}

In this appendix, we outline the computation that verifies that $\det(S_2)$ must vanish 
when $S_1 = 0$. By \eqref{GActIdComp} and 
\eqref{GActJ}, the sign of $\det(S_2)$ is an invariant. We show that
both the assumptions $\det(S_2)>0$ and $\det(S_2)<0$ lead to non-involutive structure equations,
and conclude that $\det(S_2) =0$ is the only possibility.

\subsection{Case 1. $\det(S_2)>0$}
By \eqref{GActIdComp}, one can normalize to $S_2 = I_2$; the result of this reduction
is a principal $G_1^+$-bundle $\Gcal_1^+\subset \Gcal$ where $G_1^+\subset G$ is generated by
\[
	h = \diag(1,\Ab,\Ab) \mbox{ where }\Ab\in \SL(2,\R), 
	\quad K = \diag(-1, 1, -1, -1, 1)~ \mbox{ and } J.
\]
By \eqref{GActInfi}, on $\Gcal_1^+$, the restriction of 
\[
	\phi_0, \quad
	\phi_5 - \phi_1, \quad
	\phi_6 - \phi_2,\quad
	\phi_7 - \phi_3
\]
are semi-basic; thus, there exist functions $P_{ij}$ $(i = 0,5,6,7; j = 0,\ldots,4)$ such that
\[
\begin{aligned}
		\phi_0 &=  P_{0j}\omega^j,\quad
		&&\phi_5 = \phi_1 + \sum_{j = 0}^3P_{5j} \omega^j + (P_{54}+P_{63})\omega^4,\\
		\phi_6 &= \phi_2 + P_{6j}\omega^j,\quad
		&&\phi_7 = \phi_3 + \sum_{j = 0}^3 P_{7j}\omega^j + (P_{74} - P_{53})\omega^4;
\end{aligned}
\]
where a special arrangement is made for the $\omega^4$-term in $\phi_5$ and $\phi_7$
so that the structure equations take a simpler form.

On $\Gcal_1^+$, the pseudo-connection form now reads
\[
	\diag(0,\Phi,\Phi), \quad\mbox{where}~\Phi = \left(\begin{array}{cc}\phi_1&\phi_2\\
	\phi_3&-\phi_1\end{array}\right).
\]
By adding suitable semi-basic forms to $\phi_1,\phi_2,\phi_3$ 
\[
\begin{aligned}
	\phi_1&\mapsto \phi_1 + P_{51}\omega^1+P_{52}\omega^2, \\
	\phi_2&\mapsto \phi_2 + P_{52}\omega^1+P_{62}\omega^2,\\
	\phi_3&\mapsto \phi_3  - P_{51}\omega^2+P_{71}\omega^1
\end{aligned}
\]
and adjusting $P_{61}, P_{72}$ in the following way
\[
	P_{61}\mapsto P_{61}+P_{52} , \qquad
	P_{72}\mapsto P_{72} - P_{51}
\]
all terms that involve $P_{51}, P_{52}, P_{62}, P_{71}$ will be absorbed into the pseudo-connection form.
Computing the following expressions
\begin{equation}\label{d2mod12or34}
	\ed^2\omega^i \mod \omega^1,\omega^2 ~(i = 1,2);
	\quad
	\ed^2\omega^j \mod \omega^3,\omega^4 ~(j= 3,4)
\end{equation}
then reveals the following relations
\[
	P_{54} = -P_{04}, \quad P_{61} = -P_{02}, \quad 
	P_{72} = -P_{01}, \quad P_{74} = P_{03}.
\]
Assigning these relations leaves us with $8$ torsion functions:
\[
	P_{0j} ~(j = 0,1,\ldots, 4) \quad\mbox{and } P_{k0} ~(k = 5,6,7).
\]
One may, as usual, find the $G_1^+$-action on these functions by a combination of 
infinitesimal methods and explicit calculation for the actions of $J$ and $K$; however,
at this stage one can already verify that there is no local section of $\Gcal_1^+$ on which
the restriction of the structure equations is involutive. To do so, we simply write
\[
	\phi_i = P_{ij}\omega^j \quad (i = 1,2,3)
\]
and, for the $23$ $P_{ij}$'s, define $P_{ijk}$ $(k = 0,1,\ldots,4)$ by 
\[
	\ed P_{ij} = P_{ijk}\omega^k.
\]
Applying $\ed^2 = 0$ to $\omega^1,\ldots,\omega^4$ yields an incompatible system of polynomial equations.

\subsection{Case 2. $\det(S_2)<0$} Similar to Case 1, here we normalize to 
\[
	S_2 = \left(\begin{array}{cc}0&1\\1&0\end{array}\right).
\]
This reduction results in a bundle $\Gcal_1^-\subset\Gcal$ on which the restriction of 
\[
	\phi_0, \quad \phi_5+\phi_1, \quad \phi_6 - \phi_3, \quad \phi_7 -\phi_2
\]
become semi-basic; thus, we can write
\[
	\begin{aligned}
		\phi_0 & = P_{0j}\omega^j, \quad
		&&
		\phi_5 = -\phi_1 +\sum_{j = 0}^3P_{5j}\omega^j
				+(P_{54}+P_{63})\omega^4,\\
		\phi_6& = \phi_3 + P_{6j}\omega^j, \quad
		&&
		\phi_7 =\phantom{+}\phi_2 + \sum_{j = 0}^3P_{7j}\omega^j 
				+ (P_{74} - P_{53})\omega^4.
	\end{aligned}
\]
By adjusting $\phi_1, \phi_2, \phi_3$ and $P_{62}, P_{71}$, the torsion functions 
$P_{51}, P_{52}, P_{72}, P_{61}$ can be absorbed. Computing the expressions in 
\eqref{d2mod12or34} yields
\[
	P_{54} = P_{71} = 0, \quad P_{62} = -2P_{01}, \quad P_{74} = 2P_{03}.
\]
At this stage, the torsion functions remaining are
\[
	P_{0j} ~(j = 0,1,\ldots, 4) \quad\mbox{and } P_{k0} ~(k = 5,6,7).
\]
Proceeding as in Case 1, one verifies that the structure equations are not involutive.

\section{Proof of Proposition~\ref{P14VanProp}}\label{Apdx:P14VanPf}
In this appendix, we provide details of the computation that shows that if
$P_{14}\ne 0$, then the structure equations \eqref{CaseIIstrEqn} would be non-involutive.

Assuming $P_{14}\ne0$, \eqref{CaseIIstrEqn} implies that one can normalize to 
\[
	P_{13} = 0.
\]
The result is an $e$-structure, and there exist functions $P_{7j}$ $(j = 0,1,\ldots, 4)$ such that
\[
	\phi_7 = P_{7j}\omega^j.
\]
Now, computing 
\[
\begin{aligned}
	\ed^2\omega^1&\equiv [(- P_{30}+P_{12}-P_{14}P_{70})\omega^3 - \ed P_{10}]\w\omega^0\w\omega^1&&\mod\omega^2,\omega^4,\\
	\frac{1}{2}\ed^2\omega^2&\equiv \left[\left(\frac{1}{2}P_{30}+\frac{1}{2}P_{02}+P_{12} - P_{14}P_{70}\right)\omega^3 - \ed P_{10}\right]
	\w\omega^0\w\omega^2&&\mod\omega^1,\omega^4
\end{aligned}
\]
gives two expressions for the $\omega^3$-term in $\ed P_{10}$, which enforces that
\begin{equation}\label{ApdxB:P02}
	P_{02} = -3P_{30}.
\end{equation}
Assigning this relation, there are $12$ $P_{ij}$'s left (note that $P_{73}$ is annihilated in $\phi_7\w\omega^3$); for them, define $P_{ijk}$ $(k = 0,1,\ldots, 4)$ by 
\[
 	\ed P_{ij} = P_{ijk}\omega^k.
\]
Applying $\ed^2 = 0$ to the $\omega^i$'s yields a system of $34$ polynomial equations,
the solution of which expresses $27$ of the $P_{ijk}$'s in terms of the remaining variables
occurring in the system. Among the $P_{ijk}$'s that are solved for, the following are particularly
informative:
\begin{equation}\label{ApdxB:variousPijk}
	\begin{array}{ll}
		P_{010} = \phantom{+}P_{10}P_{01} - 3P_{30}^2,\qquad\qquad
				& P_{303} = -\frac{1}{3}P_{01} - P_{32}, \\[0.25em]
		P_{014}= \phantom{+}P_{14}P_{01}, 
				& P_{324}=  \phantom{+1}P_{14}P_{32},\\[0.25em]
		P_{103} = -P_{14}P_{70} + P_{12} - P_{30},
				& P_{330} = -2P_{10}P_{33} - \frac{4}{3}P_{01} - 2P_{32},\\[0.25em]
		P_{123} = -P_{14}P_{72} + P_{11} - P_{32},
				&	P_{334} = -2P_{14}P_{33}+P_{30},		\\[0.25em]	
		P_{143}=-P_{14}P_{74} - P_{10}.&
	\end{array}
\end{equation}

To proceed, let us remind the reader that we haven't yet specified 
which among the $P_{01}, P_{02}, P_{30}, P_{32}, P_{33}$ has been normalized.
In the following, we will repeatedly use the simple fact: if $P_{ij}$ is constant, then 
the $P_{ijk}$ $(k=0,1,\ldots, 4)$ are zero.

First let us assume that $P_{01} =0$; then the expression of  $P_{010}$ implies that
$P_{30} = 0$; and, in turn, $P_{303}=0$ implies that $P_{32}  = 0$. By \eqref{ApdxB:P02},  
$P_{02}$ is also zero. By Lemma~\ref{CaseII5torLemma}, $P_{33}$ must be nonzero,
and hence we can assume that it has been normalized to a constant. By the expressions of $P_{330}$
and $P_{334}$, we deduce that $P_{10} = P_{14} = 0$. Now $P_{103}=0$ implies that 
$P_{12}=0$; and then $P_{123}=0$ implies that $P_{11}=0$. 

In summary, from $P_{01}=0$ one can, by normalizing $P_{33}$, deduce that $P_{30}$, $P_{32}$, $P_{10}$, $P_{14}$, $P_{12}$, $P_{11}$ are all zero.
Using these, one can compute
\[
	\ed^2\omega^0 \equiv 2\,\omega^0\w\omega^1\w\omega^3\mod\omega^2,
\]
which is impossible.

Thus, the remaining possibility is $P_{01} \ne 0$. By \eqref{CaseII5torTrans}, we may 
assume that $P_{01}$ is normalized to $1$. Thus, $P_{014} = 0$ implies that
$P_{14}=0$, which, via the vanishing of $P_{143}$, implies that $P_{10}=0$. Now $P_{103}=0$ implies
that $P_{12} = P_{30}$.

Computing
\[
	\ed^2\omega^0\equiv -(1+3 P_{11})\,\omega^0\w\omega^2\w\omega^3
		\mod\omega^1
\]
implies that
\[
	P_{11} = -\frac{1}{3}.
\]
Now
\[
	\ed^2\omega^0 = 2\,\omega^0\w\omega^1\w\omega^3,
\]
which is impossible.

In conclusion, the invariant $P_{14}$ must be zero.

\section{The structure equations}\label{Apdx:StrEqn}
In this appendix, we record the structure equations 
in Cases IIa, IIb and III.
\subsection{Case IIa} In \eqref{Apdx:CaseIIaStrEqn_om} below, $\phi$, which occurs in $\ed\omega^4$, is the pseudo-connection form. As \eqref{Apdx:CaseIIaStrEqn_As} shows, all torsion functions $A_\rho$ $(\rho = 1,2\ldots6)$ are constant along group fibers. 
While these structure equations are not for an $e$-structure, further reduction to an $e$-structure is possible (see the proof
of Proposition~\ref{CaseIIaGen}).

\begin{equation}\label{Apdx:CaseIIaStrEqn_om}\small
	\begin{aligned}
		\ed\omega^0& = - [(3A_1+1)\omega^1+3(A_2 - A_4)\omega^2+3A_3\omega^3]\w\omega^0\\
			&\qquad\qquad 
			+\omega^1\w\omega^2+\omega^3\w\omega^4,\\
		\ed\omega^1 & = -3A_4^2\omega^0\w\omega^1+A_2\omega^1\w\omega^2
				+A_3\omega^1\w\omega^3+\omega^2\w\omega^3,\\
		\ed\omega^2& = -A_4\omega^0\w\omega^1 - 6A_4^2\omega^0\w\omega^2
				+\omega^0\w\omega^3 + (A_5 - 2A_1)\omega^1\w\omega^2
				\\
				&\qquad\qquad
				+A_6\omega^1\w\omega^3 + 2A_3\omega^2\w\omega^3,\\
		\ed\omega^3& = 3A_4^2\omega^0\w\omega^3 + A_1\omega^1\w\omega^3
						+A_2\omega^2\w\omega^3,\\
		\ed\omega^4& =-\phi\w\omega^3 -\omega^0\w\omega^1 -12A_4^2\omega^0\w\omega^4
			-(4A_1+1)\omega^1\w\omega^4 \\
			&\qquad\qquad
			+ (-4A_2+3A_4)\omega^2\w\omega^4
				-4A_3\omega^3\w\omega^4,
	\end{aligned}
\end{equation}
and
\begin{equation}\label{Apdx:CaseIIaStrEqn_As}\small
	\begin{aligned}
		\ed A_{1}& =  6 A_{4} \left[\left(A_{1}-A_{5}-\frac{1}{2}\right) A_{4}+\frac{A_{2}}{2}\right] \omega^0+A_{1,1} \omega^1\\
			&\qquad\qquad
			+\left[A_{2} \left(A_{5}-A_{1}\right)+3 A_{4}^{2}+A_{2,1}\right] \omega^2\\
			&\qquad\qquad
			+\left[-\frac{2}{3}+\frac{\left(6 A_{1}+1\right) A_{3}}{3}+\left(A_{2}-A_{4}\right) A_{6}+A_{3,1}\right] \omega^{3},
				\\%
		\ed A_{2}&= 9 \left[\left(-1-2 A_{5}\right) A_{4}^{2}+A_{2} A_{4}+\frac{2 A_{5,0}}{3}\right]A_{4} \omega^{0}+A_{2,1} \omega^{1}+A_{2,2} \omega^{2}\\
			&\qquad\qquad
			+\Big[-9 A_{6} \left(A_{5}+1\right) A_{4}^{2}+\left(6 A_{6} A_{2}+3 A_{6,2}-3\right)A_{4}-3 A_{5}^{2} \\
			&\qquad\qquad\qquad\quad
				+\left(3 A_{1}-1\right) A_{5}+3 A_{2} A_{3}+3 A_{6} A_{5,0}+A_{1}-3 A_{5,1}\Big] 
				\omega^{3},
				\\%
		\ed A_{3}&= \left[\left(-6 A_{5}-1\right) A_{4}-A_{2}\right] \omega^{0}+A_{3,1} \omega^{1}\\
		&\qquad\qquad
		+\Big[-9 A_{6} \left(A_{5}+1\right) A_{4}^{2}+\left(6 A_{6} A_{2}+3 A_{6,2}-3\right) A_{4}+3 A_{5} A_{1}\\
		&\qquad\qquad\qquad-3 A_{5}^{2}+3 A_{6} A_{5,0}-3 A_{5,1}\Big] \omega^{2}
		+A_{3,3} \omega^{3}+3 A_{4}^{2} \omega^{4},
				\\%
		\ed A_{4}& =  6 A_{4}^{3} \omega^{0}+\left[\left(-A_{5}+2 A_{1}\right) A_{4}+\frac{A_{2}}{3}\right] \omega^{1}\\
		&\qquad\qquad+\left[\left(-3 A_{5}-3\right) A_{4}^{2}+2 A_{2} A_{4}+A_{5,0}\right]\omega^{2}\\
		&\qquad\qquad
		+\left(2 A_{4} A_{3}-A_{5}-\frac{1}{3}\right) \omega^{3},
				\\%
		\ed A_{5}&= A_{5,0} \omega^{0}+A_{5,1} \omega^{1}+A_{5,2} \omega^{2}\\
			&\qquad\qquad
			+\left[-\frac{7}{3}+\frac{\left(3 A_{5}+2\right) A_{3}}{3}+2 \left(A_{2}-A_{4}\right) A_{6}+A_{6,2}\right] \omega^{3},
				\\%
		\ed A_{6}&= -\left(6 A_{6} A_{4}^{2}+2 A_{5}+\frac{4}{3}\right) \omega^{0}+A_{6,1} \omega^{1}+A_{6,2} \omega^{2}+A_{6,3} \omega^{3}+A_{4} \omega^{4}.
	\end{aligned}
\end{equation}

\subsection{Case IIb} With $\lambda \in \{\pm 1\}$, the structure equations read:
\begin{equation}\label{Apdx:CaseIIbStrEqn_om}\small
	\begin{aligned}
		\ed\omega^0& = 3 A_1 \omega^{0}\w\omega^{1}+\frac{7}{2\lambda} \omega^{0}\w\omega^{2}+3 A_2 \omega^{0}\w\omega^{3}+\omega^{1}\w\omega^{2}+\omega^{3}\w\omega^{4},\\
		\ed\omega^1& = \frac{7}{6\lambda}\omega^1\w\omega^2 + A_2\omega^1\w\omega^3+\omega^2\w\omega^3,\\
		\ed\omega^2& = \lambda\omega^1\w\omega^3- 2A_1\omega^1\w\omega^2+ 2A_2\omega^2\w\omega^3+\omega^0\w\omega^3,\\
		\ed\omega^3&= A_1\omega^1\w\omega^3 +\frac{7}{6\lambda}\omega^2\w\omega^3,\\
		\ed\omega^4&=-\phi\w\omega^3 - 4A_1\omega^1\w\omega^4 - \frac{14}{3\lambda}\omega^2\w\omega^4 - 4A_2\omega^3\w\omega^4 - \omega^0\w\omega^1,
	\end{aligned}
\end{equation}
and
\begin{equation}\label{Apdx:CaseIIbStrEqn_As}\small
	\begin{aligned}
		\ed A_1& =  A_{1,1}\omega^1 - \frac{7A_1}{6\lambda}\omega^2 + \left(2A_1A_2 + A_{2,1} + \frac{1}{2}\right)\omega^3,\\
		\ed A_2& =  -\frac{7}{6\lambda}\omega^0 + A_{2,1}\omega^1 - \frac{2A_1\lambda + 7A_2}{2\lambda}\omega^2+A_{2,3}\omega^3.
	\end{aligned}
\end{equation}

Similar to Case IIa, while these are not structure equations of an $e$-structure, 
reduction to an $e$-structure is possible by 
normalizing a derivative of $A_{2,3}$ (see the proof of 
Proposition~\ref{CaseIIbGen}.)

\subsection{Case III} The following are the $e$-structure equations for hyperbolic Monge--Amp\`ere equations satisfying $S_1 = 0$, $S_2\ne 0$ and $Q_1, Q_2\ne 0$. These equations
are involutive in the following sense: $\ed^2=0$ applied to $\omega^k$ $(k = 0,\ldots, 4)$
are identities, and the Pfaffian system generated by the nine $\ed A_\sigma - A_{\sigma,j}\omega^j$ $(\sigma = 1,2,\ldots,9)$ has absorbable torsion and involutive tableau.

\begin{equation}\label{Apdx:CaseIII_StrEqn_om}\small
	\begin{aligned}
		\ed\omega^0& = 3(A_2\omega^2 - A_4\omega^4)\w\omega^0 + \omega^1\w\omega^2+\omega^3\w\omega^4,\\
		\ed\omega^1&= - (A_2\omega^2+A_3\omega^3+A_4\omega^4)\w\omega^1 + \omega^2\w\omega^3,\\
		\ed\omega^2& =\left(\frac{A_4}{A_2}\omega^1 - \omega^3\right)\w\omega^0 
						+ (A_1+A_5)\omega^1\w\omega^2 + \frac{A_9-1}{3A_2}\omega^1\w\omega^3 \\
					&\qquad\qquad + A_6\omega^1\w\omega^4 - A_3\omega^2\w\omega^3+2A_4\omega^2\w\omega^4, \\
		\ed\omega^3& =\phantom{+} (A_1\omega^1+A_2\omega^2+A_4\omega^4)\w\omega^3 - \omega^1\w\omega^4,\\
		\ed\omega^4& = \left(\omega^1-\frac{A_2}{A_4}\omega^3\right)\w\omega^0
					+(-A_3+A_7)\omega^3\w\omega^4 + \frac{A_9+1}{3A_4}\omega^1\w\omega^3\\
					&\qquad\qquad - A_8 \omega^2\w\omega^3 - A_1\omega^1\w\omega^4
					+2A_2\omega^2\w\omega^4.
	\end{aligned}
\end{equation}
The $\ed A_{\sigma}$'s below are listed in the particular order as $\sigma = 1,3;2,4;5,7;6,8;9$.
\begin{equation}\label{Apdx:CaseIII_StrEqn_A1}\small
	\begin{aligned}
		\ed A_1& = \left(\frac{A_2}{A_4}+2A_4\right)\omega^0 + A_{1,1}\omega^1 + (A_1A_2 - A_8)\omega^2 \\
				&\qquad \qquad + A_{1,3}\omega^3 + (A_1A_4+2A_2A_6+2A_3 - A_7)\omega^4,
	\end{aligned}
\end{equation}
\begin{equation}\label{Apdx:CaseIII_StrEqn_A3}\small
	\begin{aligned}
		\ed A_3& = \left( \frac{A_4}{A_2}-2A_2\right)\omega^0 + A_{3,1}\omega^1 - (A_3A_4-A_6)\omega^4\\
				&\qquad\qquad + A_{3,3}\omega^3 + (-A_2A_3+2A_4A_8+2A_1+A_5)\omega^2;
	\end{aligned}
\end{equation}
\begin{equation}\label{Apdx:CaseIII_StrEqn_A2}\small
	\begin{aligned}
		\ed A_2& = -A_2(A_1+A_5)\omega^1  -A_2\left[4A_2 + \frac{1}{2A_4}+ A_{5,0}\frac{A_2}{A_4} + \left(\frac{A_2}{A_4}\right)^2 \right]\omega^2\\
				&\qquad\qquad +(A_4A_8 - A_2A_3)\omega^3+\left(2A_2A_4+\frac{1}{2}\right)\omega^4,
	\end{aligned}
\end{equation}
\begin{equation}\label{Apdx:CaseIII_StrEqn_A4}\small
	\begin{aligned}
		\ed A_4& = \phantom{+} A_4(A_3 - A_7)\omega^3 + A_4\left[ 4A_4+\frac{1}{2A_2} + A_{7,0}\frac{A_4}{A_2} - \left(\frac{A_4}{A_2}\right)^2\right]\omega^4\\
				&\qquad\qquad + (A_1A_4+A_2A_6)\omega^1 - \left(2A_2A_4+\frac{1}{2}\right)\omega^2;
	\end{aligned}
\end{equation}
\begin{equation}\label{Apdx:CaseIII_StrEqn_A5}\small
	\begin{aligned}
		\ed A_5& = A_{5,0}\omega^0+A_{5,1}\omega^1+A_{5,2}\omega^2+A_{5,3}\omega^3\\
				&
			+\left[A_6\left(\frac{A_2}{A_4}\right)^2 + A_{5,0}A_6\frac{A_2}{A_4}+
						A_8\frac{A_4}{A_2}+ \frac{A_6}{2A_4} - \frac{A_5}{2A_2}
						+A_4A_5 - 3A_3+A_7\right]\omega^4,
	\end{aligned}
\end{equation}
\begin{equation}\label{Apdx:CaseIII_StrEqn_A7}\small
	\begin{aligned}
		\ed A_7&= A_{7,0}\omega^0 + A_{7,1}\omega^1+A_{7,3}\omega^3+A_{7,4}\omega^4\\
				&
			+\left[ A_8\left(\frac{A_4}{A_2}\right)^2 - A_{7,0}A_8\frac{A_4}{A_2}+ A_6\frac{A_2}{A_4} + \frac{A_7}{2A_4} - \frac{A_8}{2A_2} - A_2A_7 + 3A_1 + A_5
			\right]\omega^2;
	\end{aligned}
\end{equation}
\begin{equation}\label{Apdx:CaseIII_StrEqn_A6}\small
	\begin{aligned}
		\ed A_6&= \left[-\left(\frac{A_4}{A_2}\right)^3+A_{7,0}\left(\frac{A_4}{A_2}\right)^2 - 1\right]\omega^0 + A_{6,1}\omega^1 + A_{6,4}\omega^4\\
				&
				+ \left[A_6\left(\frac{A_2}{A_4}\right)^2 + A_{5,0}A_6\frac{A_2}{A_4}+
						A_8\frac{A_4}{A_2}+ \frac{A_6}{2A_4} - \frac{A_5}{2A_2}+3A_2A_6\right]\omega^2\\
				&
				 +\frac{1}{3A_2}\left[ 3A_4(A_{3,1} - A_{1,3})
				 		+(A_9+1)A_{7,0}\frac{A_4}{A_2}
						+ 3A_1A_4(2A_3 - A_7) \right.
						\\
				&\qquad\qquad
						+ 3A_2A_6(3A_3 - A_7)
						 -3 A_4A_6A_8 + A_4(2A_9 - 3A_{7,1} +6 )
						 \\
				&\qquad\qquad
						\left.
						 - (A_9+1)\left(\frac{A_4}{A_2}\right)^2 + \frac{1}{A_2}\right]\omega^3,
	\end{aligned}
\end{equation}
\begin{equation}\label{Apdx:CaseIII_StrEqn_A8}\small
	\begin{aligned}
		\ed A_8&= \left[\left(\frac{A_2}{A_4}\right)^3 + A_{5,0}\left(\frac{A_2}{A_4}\right)^2+1\right]\omega^0 + A_{8,2}\omega^2 + A_{8,3}\omega^3\\
			& + \left[A_8\left(\frac{A_4}{A_2}\right)^2 - A_{7,0}A_8\frac{A_4}{A_2}+ A_6\frac{A_2}{A_4} + \frac{A_7}{2A_4} - \frac{A_8}{2A_2} - 3A_4A_8 \right]\omega^4\\
			& + \frac{1}{3A_4}\left[3A_2(A_{3,1} - A_{1,3})
					+(A_9-1)A_{5,0}\frac{A_2}{A_4}
					+3A_2A_3(2A_1+A_5) 
				\right.\\
			&\qquad\qquad
				-3A_4A_8(3A_1+A_5) - 3A_2A_6A_8
				+ A_2(2A_9 - 3A_{5,3} - 6)
					\\
			&\qquad\qquad
				\left.+ (A_9 - 1)\left(\frac{A_2}{A_4}\right)^2 - \frac{1}{A_4}\right]\omega^1;		
	\end{aligned}
\end{equation}
\begin{equation}\label{Apdx:CaseIII_StrEqn_A9}\small
	\begin{aligned}
		\ed A_9&= -3\left(\frac{A_2^2A_6}{A_4}+ \frac{A_4^2A_8}{A_2} + A_2A_5+A_4A_7\right)\omega^0 + A_{9,1}\omega^1+A_{9,3}\omega^3\\
				&+A_2\left[ \frac{1 - A_9}{A_2}\left(\left(\frac{A_2}{A_4}\right)^2 + A_{5,0}\frac{A_2}{A_4}+ \frac{1}{2A_4} \right)- 3A_3(2A_1+A_5) 
				\right.\\
				&\qquad\left.\phantom{\left(\frac{A_2}{A_2}\right)^2}+
				3A_6A_8 - 2A_9 + 3(A_{1,3} - A_{3,1} + A_{5,3}) + 9\right]\omega^2\\
				&+ A_4\left[\frac{1+A_9}{A_4}\left(-\left(\frac{A_4}{A_2}\right)^2+A_{7,0}\frac{A_4}{A_2} + \frac{1}{2A_2}\right)
					+ 3A_1(2A_3 - A_7)
				\right.\\
				&\qquad\left.\phantom{\left(\frac{A_4}{A_4}\right)^2} - 
				3A_6A_8 + 2A_9 - 3(A_{1,3} - A_{3,1}+A_{7,1}) + 9\right]\omega^4.
	\end{aligned}
\end{equation}

\end{appendix}

\bibliographystyle{alpha}
\normalbaselines 
\newcommand{\etalchar}[1]{$^{#1}$}

\end{document}